\documentclass[11pt]{amsart}

\usepackage{amssymb}
\usepackage{tikz-cd}

\usepackage[
  left=1.2in,
  right=1.2in,
  top=1.2in,
  bottom=1.2in
]{geometry}

\usepackage{hyperref}

\allowdisplaybreaks
\theoremstyle{plain}
\newtheorem{theorem}{Theorem}[section]
\newtheorem{proposition}[theorem]{Proposition}
\newtheorem{corollary}[theorem]{Corollary}
\newtheorem{lemma}[theorem]{Lemma}

\theoremstyle{definition}
\newtheorem{definition}[theorem]{Definition}
\newtheorem{remark}[theorem]{Remark}
\newtheorem*{acknowledgement}{Acknowledgement}

\newcommand{\CBbb}{\mathbb C}
\newcommand{\PBbb}{\mathbb P}
\newcommand{\Span}{{\rm Span}}

\begin{document}
\title[Biholomorphic rigidity and symmetry breaking for Bergman metrics]{Biholomorphic rigidity and symmetry breaking for domains with Bergman metric of constant positive holomorphic sectional curvature}
\author{Gordon Heier, Ravi Mudaliar, and Haotian Zuo}

\address{Department of Mathematics\\University of Houston\\4800 Calhoun Road\\ Houston, TX 77204\\USA}
\email{heier@math.uh.edu}\email{rmudalia@cougarnet.uh.edu}\email{hzuo@cougarnet.uh.edu}

\thanks{The first author was supported by a grant from the Simons Foundation (Grant Number 963755-GH)}

\subjclass[2020]{32A36, 32G10, 32M18, 32Q02, 32Q10, 53C55}

\keywords{Bergman metric, Unbounded domains, Holomorphic sectional curvature, Automorphism groups, Biholomorphic rigidity}

\begin{abstract}
Based on earlier work by Wiegerinck and Huang-Li, Hayashida-Kamimoto constructed unbounded non-smooth Reinhardt domains in \({\mathbb C}^n\), \(n\geq 3\), whose Bergman spaces are finite-dimensional and whose Bergman metrics have constant positive holomorphic sectional curvature $2$. In this note, we prove a Sandwich Principle for the Hayashida-Kamimoto domains that allows us to construct, inside every prescribed
sandwich, uncountably many pairwise biholomorphically inequivalent smooth non-Reinhardt domains whose Bergman metrics also have constant positive holomorphic sectional curvature $2$. We also classify biholomorphisms between domains sandwiched by the domains of Hayashida-Kamimoto with the help of an affine rigidity theorem in the spirit of Cartan rigidity, which should be of independent interest. For explicit angular perturbation families, we determine the
full automorphism group and obtain, in particular, an abundance of examples with trivial automorphism group.
\end{abstract}

\maketitle
\section{Introduction}

It is a fundamental fact in the theory of several complex variables that every bounded domain
\(D\subset\mathbb C^n\) carries a well-defined K\"ahler metric
associated with the complex Hilbert space \(A^2(D)\) of
\(L^2\)-integrable holomorphic functions on \(D\). This metric,
called the Bergman metric of \(D\), is independent of the choice of
an orthonormal basis of \(A^2(D)\). The monomials
\[
z_1^{\alpha_1}\cdots z_n^{\alpha_n},
\qquad
(\alpha_1,\ldots,\alpha_n)\in\mathbb N_0^n,
\]
form a linearly independent subset of \(A^2(D)\), but in general
they need not be orthogonal and need not form a basis. If \(D\) is
a bounded Reinhardt domain containing the origin, then the
monomials do form an orthogonal basis. Their normalization constants,
however, depend strongly on the geometry of \(D\), and the
curvature of the resulting Bergman metric is in general difficult
to determine. In particular, there is no reason to expect the
holomorphic sectional curvature to have a definite sign, although
Bergman \cite{Bergmann_1948} proved that the holomorphic sectional
curvature of a bounded domain is bounded above by \(2\).

The unit $n$-ball in $\mathbb C^n$ has a Bergman metric of constant negative holomorphic sectional curvature and represents the negative curvature case of the three complex space forms. On the
other hand, Calabi's rigidity theorem \cite{Calabi_1953} implies
that no bounded domain can have a Bergman metric of constant
positive holomorphic sectional curvature \cite[Corollary 1.2]{Huang_Li}. Therefore, unbounded domains with finite-dimensional Bergman spaces provide the basic setting in which to look for Bergman metrics of constant
positive holomorphic sectional curvature.  Based on Wiegerinck's construction
\cite{Wiegerinck}, Huang-Li \cite{Huang_Li} produced an unbounded smooth Reinhardt domain in
\(\mathbb C^2\) whose Bergman space is generated by the constant and
linear coordinate functions and whose Bergman metric has constant
positive holomorphic sectional curvature \(2\).
Hayashida-Kamimoto recently extended this construction to higher
dimensions \cite{HayashidaKamimoto_2026}, albeit with a focus on non-smooth domains.  In the relevant parameter
range, their domains have Bergman space
\(\operatorname{Span}\{1,z_1,\ldots,z_n\}\)
and Bergman metric of constant holomorphic sectional curvature \(2\).

However, these existence results do not address natural questions that arise in the large.  A
constant-curvature Bergman metric controls the local differential
geometry, but does not by itself classify the underlying domains. It is clearly interesting to determine when two such domains are biholomorphically
equivalent, which parameters are intrinsic, and how much holomorphic
symmetry is forced by the finite-dimensional Bergman space structure.  The
present paper addresses these questions in the given context and shows, in particular,
that constant positive Bergman curvature is compatible with extensive
symmetry breaking and nontrivial biholomorphic moduli.

The equivalence problem is closely related to the classical question
of whether biholomorphic equivalence between Reinhardt domains must be
algebraic, or monomial, in the coordinates.  The bounded and
hyperbolic settings were studied by Sunada \cite{Sunada_1978}, Shimizu \cite{Shimizu_1988}, and Kruzhilin \cite{Kruzhilin_1989}.  In dimension two,
Soldatkin \cite{Soldatkin} proved an unrestricted monomial-equivalence theorem, while no corresponding theorem is known in arbitrary dimension.  This distinction already appears in the
Hayashida-Kamimoto family: Yamamori \cite{Origin_rigidity} obtained the two-dimensional
parameter classification from Soldatkin's theorem, whereas the available higher-dimensional
linearity result did not yield the full equivalence classification in
\cite{HayashidaKamimoto_2026}.

This paper has two logically distinct layers.  The first is a
curvature-independent rigidity theory for finite-dimensional degree-complete Bergman spaces.  Its
central input is the following new Cartan-type linearity theorem.  This affine rigidity theorem, in
contrast with the standard Cartan linearity phenomenon, assumes no boundedness,
circular symmetry, or distinguished fixed point. It is our first main theorem in this paper and should be of interest independently of the study of sandwich domains. For an integer
\(q\geq1\), put
\[
\mathcal P_{\leq q}=
\operatorname{Span}\{z^\alpha:|\alpha|\leq q\}.
\]

\begin{theorem}[Affine rigidity]
\label{mthm1}
Let $n\geq 2$ and let \(\Omega_1,\Omega_2\subset\mathbb C^n\) be domains satisfying
\(A^2(\Omega_1)
=A^2(\Omega_2)
=\mathcal P_{\leq q}\)
for some \(q\geq1\). Then every biholomorphism
\(F:\Omega_1\to\Omega_2\) is affine, i.e., 
\(
F(z)=Az+b\)
for some \(A\in\operatorname{GL}(n,\mathbb C)\) and
\(b\in\mathbb C^n\).
\end{theorem}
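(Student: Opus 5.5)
The plan is to use the transformation law of the Bergman space under biholomorphisms. For a biholomorphism \(F:\Omega_1\to\Omega_2\) with complex Jacobian determinant \(J_F=\det F'\), the map
\[
T_F: A^2(\Omega_2)\to A^2(\Omega_1),\qquad T_F(g)=(g\circ F)\,J_F,
\]
is a unitary isomorphism; this is just the change of variables formula, valid without boundedness. Since both spaces equal \(\mathcal P_{\leq q}\), the identity \(F\) must satisfy is
\[
(g\circ F)\,J_F\in\mathcal P_{\leq q}\quad\text{for every } g\in\mathcal P_{\leq q}. \tag{$\ast$}
\]
The same holds for \(F^{-1}\), with \(J_{F^{-1}}\circ F=1/J_F\).

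\textbf{Step 1.} Take \(g=1\) in \((\ast)\). This gives \(J_F=p\in\mathcal P_{\leq q}\), and \(p\) has no zeros on \(\Omega_1\).

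\textbf{Step 2.} Take \(g=z_j\). Then \(F_j\,p=p_j\in\mathcal P_{\leq q}\), so \(F_j=p_j/p\) is rational.

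\textbf{Step 3.} Take \(g=z^\alpha\) with \(|\alpha|=q\). This gives \(p_1^{\alpha_1}\cdots p_n^{\alpha_n}=p^{q-1}r_\alpha\) with \(\deg r_\alpha\leq q\). These identities hold on the open set \(\Omega_1\), hence as polynomial identities.

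\textbf{Step 4 (degree counting).} We want to show \(\deg p=0\). Suppose \(\deg p=d\ge1\) and pick an irreducible factor \(h\) of \(p\) of multiplicity \(m\).
\begin{itemize}
\item If \(h\) divided every \(p_j\) and \(p\) to compatible orders, we would cancel it. So reduce first: write \(F_j=a_j/c\) in lowest terms with a common denominator \(c\).
\item Apply the same reasoning to \(F^{-1}\). This gives \(1/J_F\circ F^{-1}=J_{F^{-1}}\), a polynomial, and so \(J_F\cdot(J_{F^{-1}}\circ F)=1\).
\item Here \(J_{F^{-1}}\circ F\) is a polynomial \(\tilde p\) evaluated at rational functions with denominator \(c\). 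Thus \(p\cdot\tilde p(a/c)=1\), i.e. \(p\,\tilde p^{\#}(a,c)=c^{e}\) for a suitable homogenization.
\end{itemize}
Combining this with the degree bound \(\deg(p\,F^\alpha)\le q\) for all \(|\alpha|\le q\), I expect to get \(c\mid p\) up to powers. Summing exponents, \(\deg a^\alpha\le q-\deg p+|\alpha|\deg c\), and the equality forced by invertibility should make \(c\) constant.

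\textbf{Step 5.} Once \(c\) is constant, \(F\) is polynomial with \(\deg F_j\le q-\deg p\). Applying \((\ast)\) with \(|\alpha|=q\) gives \(q\deg F+\deg p\le q\). The same holds for \(F^{-1}\), which is also polynomial. Since \(F\) is nonconstant, \(\deg F\ge1\), so \(\deg p=0\) and \(\deg F=1\). Generically \(\deg(F^\alpha)=q\deg F\), because the top homogeneous parts cannot all vanish on a monomial of top degree: a product of nonzero polynomials is nonzero. Hence \(F\) is affine, and \(A=F'\) is invertible because \(F\) is a biholomorphism.

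\textbf{Main obstacle.} The hardest part is Step 4: ruling out genuine rational, projective-type maps \(F=a/c\). For \(q=1\) such maps actually preserve the structure. A projective transformation \(z\mapsto (Az+b)/(\ell(z)+c_0)\) has Jacobian \(\propto(\ell+c_0)^{-(n+1)}\), and then \((g\circ F)J_F\) is a polynomial only if \(\ell\) is constant. This is the point where degrees must be tracked precisely.

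To handle it I would first try the \(g=1\) case directly. \(J_F\) is a polynomial, but for \(F=a/c\) in lowest terms, \(J_F\) generally has a pole along \(\{c=0\}\) of order \(n+1\) times the multiplicity. Unless that pole cancels, this contradicts polynomiality of \(J_F\). I would verify this by restricting to a generic line meeting \(\{c=0\}\) transversally and computing the Laurent expansion of the Jacobian. This shows \(c\) is constant, i.e. \(F\) is polynomial, and Step 5 then finishes the proof.
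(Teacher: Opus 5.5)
You have a genuine gap at the crucial step. Steps 1--3 and Step 5 are sound, but ruling out a nonconstant denominator $c$ is never proved: Step 4 ends with ``I expect'' and ``should''. The concrete plan in your final paragraph rests on a false principle. For a rational map $F=a/c$ in lowest terms, polynomiality of $J_F$ does not force $c$ to be constant. The pole of order $(n+1)$ times the multiplicity occurs for linear fractional maps, but in general it can cancel completely. For example, $F(z_1,z_2)=(z_1^2z_2,\,z_2/z_1)$ has denominator $z_1$ in lowest terms. It restricts to a biholomorphism of sufficiently small balls in $\{z_1z_2\neq 0\}$ onto their images, yet $\det F'(z)=3z_2$. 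So the case $g=1$ of $(\ast)$ cannot do the job by itself. You must use the conditions with $|\alpha|=q$, or some equivalent global input, and that is exactly the part you left undone.

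The paper closes this gap projectively. The unitary operator $U_F$ gives a matrix $M_F$ with $J_F\,(e_2\circ F)=M_F e_1$. So $[M_F]$ carries the Bergman image of $\Omega_1$, and hence its Zariski closure $[S_1]\nu_q(\mathbb P^n)$, onto $[S_2]\nu_q(\mathbb P^n)$. Pulling back through the Veronese embedding gives $\phi\in\operatorname{PGL}(n+1,\mathbb C)$ with $\iota\circ F=\phi\circ\iota$, so $F=(Az+b)/\ell(z)$ with $\ell$ affine. Only then does the Jacobian enter: $J_F=\det G/\ell^{n+1}$ together with $J_F\in\mathcal P_{\leq q}$ forces $\ell$ to be constant.

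Your Step 3 can also be completed elementarily. Put $\gamma=\gcd(p,p_1,\dots,p_n)$, $p=\gamma c$, $p_j=\gamma a_j$, with $\gcd(c,a_1,\dots,a_n)=1$. Taking $\alpha=q\mathbf e_j$ in Step 3 gives $p_j^q=p^{q-1}r_j$, which becomes $\gamma a_j^q=c^{q-1}r_j$. For each irreducible factor $h$ of $c$, choose an index $j$ with $h\nmid a_j$; this yields $c^{q-1}\mid\gamma$, hence $q\deg c\le\deg p\le q$. Suppose $\deg c=1$. Then $\gamma=\lambda c^{q-1}$ with $\lambda\neq0$, every $a_j$ is affine, and $F$ is linear fractional with $J_F=\lambda c^{q}$. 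Comparing with $J_F=\det G/c^{n+1}$ gives $\lambda c^{q+n+1}=\det G$, which is impossible since the left side has degree $q+n+1$. Thus $c$ is constant, and your Step 5 finishes the proof.
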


The proof uses the projective geometry intrinsic to the Bergman space. Therefore, the conclusion is obtained from the structure of
the Bergman space itself, without imposing symmetry on the domains or
  the Bergman kernel. In the excluded single variable case \(n=1\), it is a well-known fact due to Wiegerinck \cite{Wiegerinck} that any non-zero Bergman space is infinite-dimensional. Therefore, domains $\Omega_1,\Omega_2$ as assumed do not exist and the theorem is vacuous in that case.

Only the case \(q=1\) is needed for the constant-positive-curvature
sandwich construction in this paper.  The Cartan-type rigidity theorem
itself, as well as the subsequent equivalence and automorphism results
that do not use curvature, continue to apply in the corresponding
higher-degree cases \(q\geq2\).  Those cases form an independent
rigidity component of the paper rather than an extension needed for
the curvature construction.  The higher-degree examples of
Bhat-Ganguly-Nandi-Xiao \cite{Bhat_Ganguly_Nandi_Xiao_2026} show that this generality is geometrically
natural rather than merely formal.

The second layer of the paper concerns the following family of non-smooth Reinhardt domains.  For \(a>0\) and \(t>0\), define
\[
D_{a,t}
=
\left\{
z\in\mathbb C^n:
\left||z_k|^2-|z_n|^2\right|
<
\frac{t}{(1+|z_n|^2)^a},
\quad k=1,\ldots,n-1
\right\}.
\]
The domains \(D_{a,1}\) are the domains defined in Hayashida-Kamimoto
\cite{HayashidaKamimoto_2026}.  Our
starting point is that the Bergman space is unchanged upon passing to
an arbitrary intermediate domain between two members of this family of reference domains.

\begin{theorem}[Sandwich Principle]\label{sandwich}
Let \(a>0\),  \(0<t_1\leq t_2\), \(n\geq 2\) and $D\subset\mathbb{C}^n$ be a domain satisfying  $D_{a,t_1}\subset D \subset D_{a,t_2}$ (we shall call it \emph{\(a\)-sandwichable}, see Definition \ref{def:a-sandwichable}). Then $A^2(D_{a,t_1})\cong A^2(D)\cong A^2(D_{a,t_2})$. Moreover, if $\frac{2}{n-1}<a\leq\frac{3}{n-1}$, then the Bergman map of $D$ is a holomorphic isometric embedding onto an open subset of $\mathbb{P}^n$, where $\mathbb{P}^n$ is equipped with the
Fubini-Study metric. In particular, the Bergman metric of \(D\) has constant holomorphic sectional curvature \(2\).
\end{theorem}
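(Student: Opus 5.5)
The plan is to prove the first assertion by a squeeze argument on Bergman spaces, and then read off the curvature statement from the explicit form of the kernel. Since $D_{a,t_1}\subset D\subset D_{a,t_2}$, restriction gives injective linear maps
\[
A^2(D_{a,t_2})\hookrightarrow A^2(D)\hookrightarrow A^2(D_{a,t_1}),
\]
because the $L^2$ norms only decrease under restriction and holomorphic functions are determined on open subsets. The key step is to show that the composite is surjective, i.e.\ every $f\in A^2(D_{a,t_1})$ extends holomorphically to $D_{a,t_2}$ with finite $L^2$ norm. If that holds, all three spaces are identified as vector spaces of functions, and the norms are mutually comparable. Being finite-dimensional, they are then isomorphic as Hilbert spaces.

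To get surjectivity I would use the Reinhardt structure. Both $D_{a,t}$ are Reinhardt domains, so $f\in A^2(D_{a,t_1})$ has a Laurent expansion $\sum c_\alpha z^\alpha$ whose terms are mutually orthogonal. Each monomial with $c_\alpha\neq 0$ must itself lie in $A^2(D_{a,t_1})$. So it suffices to determine, for each $t>0$, exactly which Laurent monomials $z^\alpha$ (with $\alpha\in\mathbb Z^n$) are holomorphic and square integrable on $D_{a,t}$, and to show that this set is independent of $t$.

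Integrability is computed in the variables $r_k=|z_k|^2$. The condition reads $|r_k-r_n|<t(1+r_n)^{-a}$. For large $r_n$ each slab has width of order $t\,r_n^{-a}$, so
\[
\int_{D_{a,t}}|z^\alpha|^2\,dV\asymp\int \prod_k r_k^{\alpha_k}\,dr.
\]
Near infinity this behaves like $t^{n-1}\int^\infty r_n^{|\alpha|-(n-1)a}\,dr_n$, and near the origin and the coordinate axes the behaviour is likewise independent of $t$. Convergence is therefore governed by the same inequalities on $\alpha$ for every $t$. Holomorphy is also $t$-independent: $D_{a,t}$ meets each coordinate hyperplane for every $t$, since $z=0$ lies in it, so negative exponents are excluded uniformly. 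Thus $A^2(D_{a,t})$ is spanned by the same monomials $\{z^\alpha:\ |\alpha|<(n-1)a-1,\ \alpha\in\mathbb N_0^n\}$ for all $t$; here I would invoke the Hayashida--Kamimoto computation. This gives the isomorphism.

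For the curvature statement, the range $\frac{2}{n-1}<a\le\frac{3}{n-1}$ gives $1<(n-1)a-1\le 2$, so the space is exactly $\Span\{1,z_1,\dots,z_n\}$. This holds for $D$ as well, although $D$ is not Reinhardt. Any orthonormal basis of $A^2(D)$ therefore has the form $h_j=\ell_j(1,z)$, where $(\ell_j)$ is an invertible linear transformation of $(1,z_1,\dots,z_n)$ onto an orthonormal basis for the $D$-inner product. The Bergman kernel is
\[
K(z,\bar z)=(1,z)^T H\,\overline{(1,z)}
\]
for a positive definite Hermitian $H$. Consequently the Bergman map $z\mapsto[h_0(z):\dots:h_n(z)]$ is the standard affine chart $z\mapsto[1:z_1:\dots:z_n]$ composed with a projective linear map. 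It is injective and open, and it pulls back the Fubini--Study form, since $\partial\bar\partial\log\sum|h_j|^2$ is exactly the pullback of $\omega_{FS}$. Thus $D$ embeds isometrically onto an open subset of $\mathbb P^n$, and the holomorphic sectional curvature is $2$ in the normalization used.

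The main obstacle is the integrability analysis that fixes which monomials belong to $A^2(D_{a,t})$, especially the behaviour near the singular set where $|z_k|=|z_n|$ meets the axes. If the paper states it earlier, I would simply cite Hayashida--Kamimoto's determination of $A^2(D_{a,1})$ and note that the scaling in $t$ only multiplies the relevant integrals by bounded factors.
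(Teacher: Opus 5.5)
Your proposal is correct and follows the paper's structure: the restriction chain $A^2(D_{a,t_2})\subset A^2(D)\subset A^2(D_{a,t_1})$, reduction to monomials via the Reinhardt structure, $t$-independence of which monomials are square integrable, the Hayashida--Kamimoto computation, and the fact that the Bergman map is $z\mapsto[1:z_1:\cdots:z_n]$ followed by a projective linear map. The only difference is the $t$-independence step, where the paper uses the exact dilation $r_j=\sqrt{\lambda}\,R_j$, $\lambda=t_2/t_1$, to get $\|z^\alpha\|^2_{D_{a,t_2}}\le\lambda^{|\alpha|+n}\|z^\alpha\|^2_{D_{a,t_1}}$; your slab-width estimate in the variables $|z_k|^2$ is equally valid and in fact re-derives the criterion $|\alpha|<(n-1)a-1$ directly for every $t$.
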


Note that no Reinhardt symmetry or boundary regularity is imposed on the
intermediate domain \(D\). Note also that, throughout this paper, the symbol $\subset$ permits equality.

Combining Theorem \ref{mthm1} with a linear preserver argument yields a complete
biholomorphic classification as follows. We refer to Remark \ref{n3_remark} for why the assumption of \(n\geq3\) is essential for us.
\begin{theorem}[Rigidity of sandwichable domains]
\label{biholclassification}
Let \(n\geq3\), and let
\(a_1,a_2>\frac{2}{n-1}.\) For \(j=1,2\), let \(\Omega_j\subset\mathbb C^n\) be an
\(a_j\)-sandwichable domain. If there is a biholomorphism
\(F:\Omega_1\longrightarrow\Omega_2\), then \(a_1=a_2\). Moreover, every biholomorphism \(F\) is of the form
\(F(z)=\rho DPz,\)
where \(\rho>0\), \(D\) is a diagonal unitary matrix, and \(P\) is a
permutation matrix.
In particular, every automorphism of an \(a\)-sandwichable domain is of the form \(F(z)= DPz\), i.e., has $\rho =1$.
\end{theorem}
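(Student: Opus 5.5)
By the Sandwich Principle (Theorem \ref{sandwich}), $A^2(\Omega_j)\cong A^2(D_{a_j,1})$, and in fact the isomorphism is equality of function spaces, since $D_{a,t_1}\subset\Omega_j\subset D_{a,t_2}$ and the inclusions are realized by restriction. The first step is to identify $A^2(D_{a,1})$ as $\mathcal P_{\leq q(a)}$ for some integer $q(a)\geq1$ whenever $a>\frac{2}{n-1}$. I would get this from the monomial integrability computation of Hayashida--Kamimoto: a monomial $z^\alpha$ is square integrable on $D_{a,1}$ iff a one-variable integral in $r=|z_n|$ of the shape $\int^\infty r^{2|\alpha|+2n-1}\,r^{-2a(n-1)}\,dr$ converges. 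That condition depends only on $|\alpha|$, and the threshold $a>\frac2{n-1}$ is exactly what makes the linear monomials integrable. Orthogonality of the monomials on a Reinhardt domain then gives $A^2=\mathcal P_{\leq q}$.

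If $q(a_1)\neq q(a_2)$, a biholomorphism would induce an isomorphism of Bergman spaces of different dimensions, which is impossible. So $q(a_1)=q(a_2)=q$, and Theorem \ref{mthm1} applies: $F(z)=Az+b$.

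The second step exploits the geometry at infinity. Every $D_{a,t}$ lies in an $O(|z|^{1-2a(n-1)})$-neighbourhood, in fact thinner, of the real cone
\[
C=\{z:|z_1|=\cdots=|z_n|\}=\{\lambda u:\lambda\geq0,\ u\in(S^1)^n\},
\]
and each $\Omega_j$ contains $D_{a_j,t_1}$, which asymptotically fills such a neighbourhood. Since $F$ is affine, taking $R\to\infty$ along $\Omega_1$ and rescaling by $1/R$ shows that $A(C)\subset C$; applying the same argument to $F^{-1}$ gives $A(C)=C$.

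The linear preserver step is to show that $A(C)=C$ forces $A=\rho DP$. The plan is to consider the images of the tori $u\in(S^1)^n$. Each row $\ell_k$ of $A$ must satisfy $|\ell_k(u)|=|\ell_n(u)|$ on the whole torus. Expanding $|\ell_k(u)|^2$ as a trigonometric polynomial in the phases, and comparing Fourier coefficients, gives $\ell_k=c_k e^{i\theta_k}\ell_n$ up to coordinate structure, and in fact each row has a single nonzero entry. I expect this to be the main obstacle. It is also the place where $n\geq3$ should enter, as in Remark \ref{n3_remark}: for $n=2$ the cone $|z_1|=|z_2|$ has extra linear symmetries, for instance $(z_1,z_2)\mapsto(z_1+z_2,z_1-z_2)/\sqrt2$ type maps of the real hypersurface.

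Next I would kill the translation. For $z=Ru\in C$, the defining quantity $|z_k+b_k|^2-|z_n+b_n|^2$ differs from $|z_k|^2-|z_n|^2$ by $2R\,\mathrm{Re}(u_k\bar b_k-u_n\bar b_n)+O(1)$. This must be $O(R^{-2a})$-small along a thin neighbourhood, for all phases $u$, which forces $b=0$.

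The third step compares the shrinking rates. With $A=\rho DP$, the defining functions transform by $|\,\cdot\,|^2\mapsto\rho^2|\,\cdot\,|^2$ and $1+|z_n|^2\sim\rho^2|z_n|^2$. The inclusion $F(D_{a_1,t_1})\subset D_{a_2,t_2}$ at radius $R$ then reads $\rho^2 t_1R^{-2a_1}\lesssim t_2(\rho R)^{-2a_2}$, and the reverse inclusion via $F^{-1}$ gives the opposite inequality. Hence $a_1=a_2$.

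Finally, for an automorphism with $\rho\neq1$, say $\rho>1$ after replacing $F$ by $F^{-1}$, the iterates $F^m$ are again automorphisms. Comparing widths at large radius gives
\[
F^m(D_{a,t_1})\approx D_{a,\rho^{2m(1-a)}t_1}
\]
up to the permutation/phase symmetry, which preserves every $D_{a,t}$. For $a\neq1$ this eventually escapes the bounds $t_1\leq\cdot\leq t_2$ in one direction, either for $F^m$ or for $F^{-m}$, which is a contradiction. The case $a=1$, where the width scaling is neutral, would instead be handled by comparing the lower-order term $1+|z_n|^2$ near $z_n=0$, where $\rho$-scaling visibly changes the slab thickness $t$. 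Either way $\rho=1$.
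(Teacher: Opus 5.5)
Your architecture matches the paper's: equal dimensions force a common $\mathcal P_{\le q}$, Theorem~\ref{mthm1} makes $F$ affine, the asymptotic cone gives $A(\mathcal C)=\mathcal C$ (doing this before removing $b$ is fine, since rescaling by $1/R$ kills translations), the term linear in $R$ kills $b$, and the width comparison gives $a_1=a_2$.

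\textbf{The linear preserver step.} This is the step you call the main obstacle, and it carries the whole $\rho DP$ classification, but you do not prove it. Your sketch also points the wrong way: $\ell_k=c_ke^{i\theta_k}\ell_n$ with $k\neq n$ would make two rows of the invertible matrix $A$ proportional. Comparing Fourier coefficients of $|\ell_r(\zeta)|^2=|\ell_s(\zeta)|^2$ on $\mathbb T^n$ actually gives two facts. For $j\neq k$, the product $a^{(r)}_j\overline{a^{(r)}_k}$ does not depend on the row $r$. And all rows have the same norm.

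The missing idea is a support argument. Suppose $a^{(r)}_p\overline{a^{(r)}_q}\neq0$ for some $p\neq q$. Then $a^{(s)}_p\neq0$ for every row $s$, and comparing the products $a^{(s)}_p\overline{a^{(s)}_\ell}$ across rows shows that all rows have a common support $S\supset\{p,q\}$. If $|S|\geq3$, the ratios $b_j=a^{(s)}_j/a^{(r)}_j$ ($j\in S$) satisfy $b_j\overline{b_k}=1$ for all distinct $j,k\in S$. Hence they all equal one unimodular $b$, and rows $r,s$ are proportional. If $|S|=2$, then $\operatorname{rank}A\leq 2<n$. Both cases contradict invertibility, and the second is exactly where $n\geq3$ enters. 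So each row has a single nonzero entry, $A$ is monomial, and equal row norms give $A=\rho DP$.

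Your $n=2$ example is also wrong: it sends $(1,1)\in\mathcal C$ to $(\sqrt{2},0)\notin\mathcal C$. The genuine extra symmetries preserve the Hermitian form $|z_1|^2-|z_2|^2$, as in Remark~\ref{n3_remark}.

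\textbf{The step $\rho=1$.} Here there is a computational error. Under $z\mapsto\rho z$, the slab width $\rho^2t(1+R^2)^{-a}$ at radius $\rho R$ matches $t'(1+\rho^2R^2)^{-a}$ asymptotically iff $t'=\rho^{2(1+a)}t$, not $\rho^{2(1-a)}t$. So there is no neutral case $a=1$. Your fallback ``near $z_n=0$'', which as written is not an argument, is therefore unnecessary.

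With the corrected exponent your iteration does work. Take the test points of your third step with $\delta_R=\frac{t_1}{2}(1+R^2)^{-a}$, apply $F^m$ for $m\in\mathbb Z$, and let $R\to\infty$. This gives $\rho^{2m(1+a)}\leq 2t_2/t_1$ for all $m$, so $\rho=1$.

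The paper's route is shorter. Since $1\in A^2(\Omega)$, $\operatorname{Vol}(\Omega)<\infty$. Then $F(\Omega)=\Omega$ with $F$ complex linear gives $\operatorname{Vol}(\Omega)=|\det F|^2\operatorname{Vol}(\Omega)=\rho^{2n}\operatorname{Vol}(\Omega)$.

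\textbf{Minor slips.} Two exponent slips are harmless but worth fixing. The radial integrand in your first step is $r^{2|\alpha|+1-2a(n-1)}$; with your exponent, even the linear monomials would require $a>\frac{n+1}{n-1}$. Also, $D_{a,t}$ lies within distance $O(|z|^{-1-2a})$ of $\mathcal C$, not $O(|z|^{1-2a(n-1)})$.
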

In particular, biholomorphic equivalence among domains in a given sandwich
is reduced to an explicit monomial action.  Furthermore, in Theorem \ref{reference_domain_rigidity}, we will deduce the full parameter classification of the reference family:
\[
D_{a_1,t_1}
\ \text{is biholomorphic to}\
D_{a_2,t_2}
\quad\Longleftrightarrow\quad
(a_1,t_1)=(a_2,t_2)
\]
whenever \(a_1,a_2>2/(n-1)\).  Thus both parameters are genuine
biholomorphic invariants.

The Sandwich Principle also makes it possible to break symmetry while
preserving the relevant Bergman space.  We construct smooth angular
perturbations inside every prescribed sandwich, prove that they are
non-Reinhardt, and determine their full automorphism groups from the
perturbation data.  The resulting families exhibit both continuous
biholomorphic moduli and domains with no nontrivial holomorphic
symmetry as stated in the following theorem, which is the second main theorem of this paper.

\begin{theorem}[Abundance of domains]
\label{mthm2}
Let \(n\ge3\) and
\(\frac{2}{n-1}<a\le\frac{3}{n-1}.\)
For every \(0<t_1<t_2\), there exist uncountably many pairwise biholomorphically inequivalent smooth non-Reinhardt domains $D$ with
\(
D_{a,t_1}\subset D\subset D_{a,t_2}\), Bergman metric of constant
positive holomorphic sectional curvature \(2\), and trivial automorphism group.
\end{theorem}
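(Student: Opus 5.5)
The plan is to build an explicit smooth angular perturbation of the reference domains inside the sandwich. The curvature assertion then comes from Theorem \ref{sandwich}. Pairwise inequivalence and triviality of the automorphism group come from Theorem \ref{biholclassification}, which says every biholomorphism between such domains is $z\mapsto \rho DPz$.

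\textbf{Step 1: the perturbed domains.} Fix $t_1<t_0<t_2$ and a smooth function $\phi$ on the torus $(\mathbb R/2\pi\mathbb Z)^n$ with $\|\phi\|_\infty$ small. Write $z_k=r_ke^{i\theta_k}$ and put
\[
D^\phi=\Big\{z:\ \big||z_k|^2-|z_n|^2\big|<\frac{t_0\,(1+\phi(\theta))}{(1+|z_n|^2)^a},\ k=1,\ldots,n-1\Big\}.
\]
If $t_0(1\pm\|\phi\|_\infty)\in(t_1,t_2)$, then $D_{a,t_1}\subset D^\phi\subset D_{a,t_2}$. Since $\theta$ is singular where some $z_j=0$, I would use angular factors that vanish to high order there, such as $\phi(z)=\epsilon\,\mathrm{Re}\,\psi(z)\chi(|z|)$ with $\psi$ a nonmonomial polynomial. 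I would then replace the non-smooth boundary, which has corners where two defining inequalities are active, by a smoothly rounded version. Concretely, I would take a smooth regularized maximum of the functions $\big||z_k|^2-|z_n|^2\big|(1+|z_n|^2)^a/t_0-\phi$ and perturb the level slightly so that the boundary is a smooth hypersurface by Sard. The domain must remain connected and sandwiched between $D_{a,t_1}$ and $D_{a,t_2}$. I expect the smoothness at corners and at infinity to be the main technical obstacle; the fix is to absorb the rounding into the gap between $t_1$ and $t_2$, using the fact that the sandwich condition is scale-invariant in the defining function. Theorem \ref{sandwich} then gives a Bergman metric of constant holomorphic sectional curvature $2$.

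\textbf{Step 2: automorphisms and non-Reinhardt property.} By Theorem \ref{biholclassification}, every automorphism of $D^\phi$ has the form $z\mapsto DPz$. The rotations $D$ shift $\theta$, and the permutations $P$ permute coordinates. Choose $\phi$ generic: no nontrivial torus translation combined with a permutation preserves the pair consisting of $\phi$ and the defining functions. Then the only automorphism is the identity. The roles of $z_n$ versus $z_1,\dots,z_{n-1}$ already restrict $P$ to permutations fixing $n$ or a small set. In particular no nontrivial rotation is an automorphism, so $D^\phi$ is not Reinhardt. To verify genericity, I would take a suitable boundary slice of $\partial D^\phi$, read off $\phi$ there, and check that $\phi(\theta+c,\sigma)\ne\phi$ for all nontrivial $(c,\sigma)$. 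This holds, for example, with $\phi=\epsilon\sum_j \lambda_j\cos(\theta_j)$ with distinct $\lambda_j$, adjusted to vanish near the coordinate axes.

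\textbf{Step 3: uncountably many classes.} Use the one-parameter family $\phi_s=s\phi$ for $s$ in a small interval, or vary $\lambda$. By Theorem \ref{biholclassification}, a biholomorphism $D^{\phi_s}\to D^{\phi_{s'}}$ is $\rho DPz$. The asymptotic profile at infinity forces $\rho=1$: both domains lie between $D_{a,t_1}$ and $D_{a,t_2}$, and a scaling by $\rho\neq1$ changes the $(1+|z_n|^2)^{-a}$ decay rate in a way incompatible with the sandwich as $|z_n|\to\infty$. With $\rho=1$, the same genericity as in Step 2 shows the map is the identity and $\phi_s=\phi_{s'}$, hence $s=s'$. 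This gives uncountably many pairwise inequivalent domains, each smooth, non-Reinhardt, with trivial automorphism group and Bergman curvature $2$.
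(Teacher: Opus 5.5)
\textbf{Gap in Step 3: the sandwich does not force $\rho=1$.} A dilation leaves the decay exponent $a$ unchanged and only changes constants. Explicitly,
\[
\rho D_{a,t}=\Bigl\{w:\ \bigl||w_k|^2-|w_n|^2\bigr|<\frac{\rho^2t}{(1+|w_n|^2/\rho^2)^{a}},\ k<n\Bigr\}.
\]
For all $x\ge0$ the quantity $\rho^2\bigl((1+x)/(1+x/\rho^2)\bigr)^a$ lies between $\rho^2$ and $\rho^{2+2a}$, so
\[
D_{a,t\min(\rho^2,\rho^{2+2a})}\subset\rho D_{a,t}\subset D_{a,t\max(\rho^2,\rho^{2+2a})}.
\]
Your domains $D^{\phi_s}$ sit strictly inside the gap, since you require $t_0(1\pm\|\phi\|_\infty)\in(t_1,t_2)$. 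Hence every dilate $\rho D^{\phi_s}$ with $\rho$ near $1$ lies in the same sandwich. Accordingly, the estimate in the proof of Theorem~\ref{biholclassification} only confines $\rho$ to a bounded interval around $1$. That theorem gives $\rho=1$ only for automorphisms, via $\operatorname{Vol}(\Omega)=|\det F|^2\operatorname{Vol}(\Omega)$, and this identity is unavailable for a map between two different domains. Your comparison of $\phi_s$ with $\phi_{s'}$ presupposes $\rho=1$, so pairwise inequivalence is not established.

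\textbf{How to close it.} You have to use the exact boundary rather than the sandwich. A dilation rescales the boundary width by $\rho^2$ near $|z_n|=0$ but by $\rho^{2+2a}$ as $|z_n|\to\infty$; this mismatch between the two ends is what excludes $\rho\neq1$. To exploit it you need boundary test points at which the perturbation vanishes in \emph{both} domains, even though $DP$ rotates phases.

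The paper varies the regular level $c$ rather than an amplitude. It uses purely imaginary points $\pm z^{(r)}$ at which only one defining factor is active. All perturbation degrees are $1$, so the perturbation is odd while $R$ is even. Adding the boundary equations at $F(z^{(r)})$ and $F(-z^{(r)})=-F(z^{(r)})$ therefore gives $R(F(z^{(r)}))=c_2$. With $s_i=\chi^{-1}(c_i)$, this reads
\[
s_2=s_1\rho^4\bigl((1+\rho^2r^2)/(1+r^2)\bigr)^{2a}\qquad\text{for all } r>0.
\]
Comparing $r\to0$ with $r\to\infty$ forces $\rho=1$, and then $c_1=c_2$.

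The same trick works for your family if you build $\phi$ from the smooth odd functions $\mathrm{Re}(z_j)(1+|z_j|^2)^{-1/2}$. You must also check that your smoothing does not alter the boundary at these test points.

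\textbf{Smaller gaps in Step 2.} You do not prove that an automorphism fixes the distinguished index $n$. You also do not show that $\phi$ can still be read off from the boundary after smoothing. The paper supplies these through Lemma~\ref{lem:linear-distinguished} and the coordinate-axis profiles of Lemma~\ref{lem:axis-profiles}.
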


The paper is organized as follows.
Section~\ref{definitions} gives basic definitions and records the finite-dimensional Bergman map
and the projective model used later.
Section~\ref{The Sandwich Principle} proves the Sandwich Principle, i.e., Theorem \ref{sandwich}.
Section~\ref{sec:rigidity} establishes
Theorem~\ref{mthm1} and classifies biholomorphisms in the
sandwich class, proving Theorem~\ref{biholclassification}.
Section~\ref{sec:general-perturbations} constructs the angular
perturbation families, and Section~\ref{sec:perturbation-aut}
determines their automorphism groups.
Section~\ref{sec:applications} derives the examples with trivial
automorphism group and proves Theorem~\ref{mthm2}.
Finally, Section~\ref{sec:reference-family} treats the reference
Reinhardt domains and determines their automorphism groups and parameter
rigidity.

\begin{acknowledgement}
We would like to thank Professor Xiaojun Huang for introducing us to the topic of unbounded domains with Bergman metric of constant positive holomorphic sectional curvature.
\end{acknowledgement}

\section{Definitions}\label{definitions}

\begin{definition}
A nonempty subset $D\subset \CBbb^n$ is called a {\it domain} if it is open and connected.
\end{definition}
Any domain $D\subset \CBbb^n$ admits a real-valued smooth, i.e., $C^\infty$, function $\gamma$ such that $D=\{\gamma <0\}$ and the topological boundary $\partial D$ satisfies $\partial D =\{\gamma =0\}$. We say that $D$ is a {\it smooth domain} if $\gamma$ can be chosen such that $d\gamma$ is non-zero everywhere on $\partial D$.
\begin{definition}\label{reinhardt}
A domain $D$ is called {\it Reinhardt} if $z=(z_1,\ldots,z_n)\in D$ implies 
\((e^{i\theta_1}z_1,\ldots,e^{i\theta_n}z_n)\in D\)
for all
\(0\leq \theta_j < 2\pi,~j=1,\ldots,n.\)
A domain $D$ is called {\it complete Reinhardt} if
\(z\in D\)
implies that
\((\mu_1 z_1,\ldots,\mu_n z_n)\in D\)
for all
\(\mu_j\in \mathbb C,~|\mu_j|\leq 1,~ j=1,\ldots,n.\)
\end{definition}

\begin{remark}
The above definition of a complete Reinhardt domain is the one from Krantz's book \cite{Krantz_FTSCV}. It appears that the papers \cite{Huang_Li} and \cite{HayashidaKamimoto_2026} work with a different definition of that notion. While the domains studied in those papers are clearly Reinhardt since only the absolute values of the coordinate functions occur in their defining functions, they are not complete Reinhardt in the sense of Definition \ref{reinhardt} for the following reason. The diagonal points $(1,1)$ and $(1,\ldots,1)$, respectively, are in the domain, while the points $(0,1)$ and $(0,\ldots,0,1)$, respectively, are not, which violates the definition of a complete Reinhardt domain with the choice $\mu_1=\ldots=\mu_{n-1}=0$, $\mu_n=1$. 
\end{remark}

\begin{definition}
For a domain $D$, we define its {\it Bergman space} to be
\[ 
A^2(D)=\left\{f:D\to \CBbb \text{ holomorphic} : \| f \|_D^2 = \int _D |f(z)|^2 dV < \infty \right\},
\]
where $dV$ is the Lebesgue measure on $\CBbb^n$.
\end{definition}

\begin{definition}\label{non_deg_bergman_definitions}
The Bergman space \(A^2(D)\) is said to
\begin{enumerate}
\item be \emph{base-point free} if, for every \(p\in D\), there exists
\(f\in A^2(D)\) such that \(f(p)\neq 0\);

\item \emph{separate points} if, for every \(p\neq q\) in \(D\),
there exists \(f\in A^2(D)\) such that
\(f(p)=0\) and \(f(q)\neq 0\);

\item \emph{separate holomorphic tangent directions} if, for every
\(p\in D\) and every nonzero \(X_p\in T_p^{1,0}D\), there exists
\(f\in A^2(D)\) such that
\(
f(p)=0 \text{ and } X_p(f)\neq 0.
\)

\end{enumerate}
\end{definition}

The Bergman space is a Hilbert subspace of the Hilbert space $L^2(D)$. 
Since $L^2(D)$ is separable, there is an orthonormal basis $\{\varphi_j\}_{j=1}^N$ of $A^2(D)$ with $N$ finite or infinite. The {\it Bergman kernel} of \(D\) is defined by
\[
K_D(z,w):=\sum_{j=1}^N \varphi_j(z)\overline{\varphi_j(w)}.
\]
This expression is independent of the chosen orthonormal basis.  The restriction of the 
Bergman kernel to the diagonal is given by
\[
K_ D(z,z)=\sum_{j=1}^N |\varphi_j(z)|^2 .
\]
In particular, \(K_D(z,z)>0\) if and only if there exists some
\(f\in A^2(D)\) such that \(f(z)\neq 0\). Thus base-point freeness
is precisely the condition ensuring that \(K_D(z,z)\) is positive
at every point of \( D\), so that \(\log K_D(z,z)\) is defined
everywhere.\par

The {\it Bergman metric} is then formally defined by
\[
g_{i\bar j}(p)
=
\frac{\partial^2}{\partial z_i\,\partial \overline{z_j}}
\log K_D(z,z)\bigg|_{z=p}\quad (1\leq i,j\leq n).
\]
However, this expression defines a genuine Hermitian metric only if the
associated Hermitian form $\omega$, called the {\it Bergman form}, is positive definite at every point. This
requires the Bergman space to separate holomorphic tangent directions:
for every \(p\in D\) and every nonzero
\(X_p\in T_p^{1,0} D\), there exists \(f\in A^2(D)\) such that
\[
f(p)=0
\qquad\text{and}\qquad
X_p(f)\neq 0.
\]
Consequently, the phrase ``the Bergman metric'' is justified as the name
of a genuine Hermitian (and then evidently K\"ahler) metric only when \(A^2( D)\) is base-point free
and separates holomorphic tangent directions. Without base-point
freeness, \(\log K_D(z,z)\) may fail to be defined; without
separation of tangent directions, the associated Hermitian form may be degenerate.\par

The preceding conditions also have a natural geometric interpretation in terms
of the Bergman map. Since this work only deals with the finite-dimensional case, we assume for this discussion that \(A^2(D)\) has finite dimension \(N\). Assuming that \(A^2(D)\)
is base-point free, the formula
\[
\beta_D:D\longrightarrow \mathbb{P}^{N-1},\qquad
z\longmapsto [\varphi_1(z):\ldots:\varphi_N(z)]
\]
defines a well-defined holomorphic map into projective space. 
The map \(\beta_D\) is independent of the chosen orthonormal basis up to a
unitary change of homogeneous coordinates. Moreover, the separation properties
above have the expected geometric meaning. The Bergman space separates points
precisely when \(\beta_D\) is injective, and it separates holomorphic tangent
directions precisely when \(\beta_D\) is an immersion.

The connection with the Bergman metric is given by the Fubini-Study form. If
\(\omega_{\mathrm{FS}}\) denotes the Fubini-Study form on projective space,
normalized in homogeneous coordinates by
\[
\omega_{\mathrm{FS}}
=
\sqrt{-1}\,\partial\overline{\partial}
\log \sum_{j=1}^N |Z_j|^2,
\]
then
\[
\beta_D^*\omega_{\mathrm{FS}}
=
\sqrt{-1}\,\partial\overline{\partial}
\log \sum_{j=1}^N |\varphi_j(z)|^2
=
\sqrt{-1}\,\partial\overline{\partial} \log K_D(z,z)
=
\omega_D .
\]
Thus, whenever the Bergman form is positive definite, the Bergman map is a
holomorphic isometric immersion of \(D\), equipped with its Bergman metric, into
projective space equipped with the Fubini-Study metric. If in addition
\(A^2(D)\) separates points, then this map is a holomorphic isometric injective immersion.
\begin{definition}\label{def:a-sandwichable}
Let \(a>0\). A domain \(\Omega\subset\mathbb C^n\) is called
\emph{\(a\)-sandwichable} if there exist \(0<t_1\leq t_2\) such that
\(D_{a,t_1}\subset\Omega\subset D_{a,t_2}.\) We say \(\Omega\) is \emph{sandwichable} if it is \(a\)-sandwichable for some \(a\).
\end{definition}

\section{The Sandwich Principle, Proof of Theorem \ref{sandwich}}\label{The Sandwich Principle}
Theorem \ref{sandwich} is based on the following explicit determination of the Bergman spaces of \(D_{a,1}\) for all values \(a>0\). 
\begin{theorem}[{\cite[Theorem 2.1]{HayashidaKamimoto_2026}}]\label{HK_integral_computation_theorem}
If \(a>\frac{1}{n-1}\), then the Bergman space of \(D_{a,1}\) is given by
\[
A^2(D_{a,1})
=
\operatorname{Span}\{z^p : p\in \mathbb N_0^n,\ |p|<(n-1)a-1\}.
\]
Furthermore, the dimension of \(A^2(D_{a,1})\) can be expressed as
\[
\dim A^2(D_{a,1})
=
\begin{cases}
0
& \text{if } 0<a\leq \dfrac{1}{n-1}, \\[6pt]
\binom{n+k}{k}
& \text{if } \dfrac{k+1}{n-1}<a\leq \dfrac{k+2}{n-1}
\quad (k\in \mathbb N_0).
\end{cases}
\]
\end{theorem}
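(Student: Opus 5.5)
Since $D_{a,1}$ is Reinhardt and contains the origin, I would expand every element of the Bergman space in monomials, use rotational symmetry to make the monomials orthogonal, and then decide by a one-variable asymptotic estimate exactly which monomials are square integrable.

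\emph{Step 1 (power series).} The domain $D_{a,1}$ is defined only through $|z_1|,\ldots,|z_n|$, so it is Reinhardt. It contains the origin, since $0<1$. It is also connected: every point can be joined radially within each coordinate circle to the diagonal curve $|z_1|=\cdots=|z_n|$, and that curve lies in the domain.

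By the standard theory of Reinhardt domains, every holomorphic $f$ on $D_{a,1}$ has a Laurent expansion $f=\sum_{p\in\mathbb Z^n}c_pz^p$ converging normally on compacta. Because the domain meets each hyperplane $\{z_j=0\}$ (at the origin, say), the coefficients with some $p_j<0$ must vanish. Hence $f=\sum_{p\in\mathbb N_0^n}c_pz^p$.

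\emph{Step 2 (orthogonality and Parseval).} Write $D_{a,1}$ in polar coordinates as a union of tori $\{|z_k|=r_k\}$ over the base region
$$R=\bigl\{s\in[0,\infty)^n:\ |s_k-s_n|<(1+s_n)^{-a},\ k\le n-1\bigr\},\qquad s_k=r_k^2 .$$
On each torus, Parseval gives $\int|f|^2\,d\theta=(2\pi)^n\sum_p|c_p|^2r^{2p}$. Integrating over $R$ and applying Tonelli yields
$$\|f\|^2=\sum_p|c_p|^2\|z^p\|^2 .$$
Thus $f\in A^2(D_{a,1})$ if and only if $\sum_p |c_p|^2\|z^p\|^2<\infty$. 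In particular $c_p=0$ whenever $\|z^p\|=\infty$. Consequently $A^2(D_{a,1})$ is the closed span of the monomials of finite norm. Once Step 3 shows there are only finitely many such monomials, this closed span is just their linear span.

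\emph{Step 3 (norm computation).} We have
$$\|z^p\|^2=\pi^n\int_R s^p\,ds .$$

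For the upper bound, use that $R$ projects onto $s_n\in[0,\infty)$, and for each $s_n$ the fibre in $s_k$ has length at most $2(1+s_n)^{-a}$ and lies in $[0,s_n+1]$. Hence
$$\int_R s^p\,ds\le C\int_0^\infty (1+s_n)^{|p|-(n-1)a}\,ds_n .$$

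For the lower bound, take $s_n\ge 2$. Then the fibre contains $[s_n,\,s_n+(1+s_n)^{-a}]$, where $s_k\ge s_n$, so
$$\int_R s^p\,ds\ge\int_2^\infty s_n^{|p|}(1+s_n)^{-(n-1)a}\,ds_n .$$

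Both bounds are finite if and only if $|p|-(n-1)a<-1$, that is, $|p|<(n-1)a-1$. This proves the span formula. When $a\le\frac1{n-1}$ even $p=0$ fails the condition, so the space is $0$; this is consistent with the dimension formula.

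\emph{Step 4 (dimension).} If $\frac{k+1}{n-1}<a\le\frac{k+2}{n-1}$, then $k<(n-1)a-1\le k+1$. So the admissible multi-indices are exactly those with $|p|\le k$, and their number is $\binom{n+k}{k}$.

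The main obstacle is Step 1 together with the interchange in Step 2. One must justify the Laurent expansion on a non-complete, non-convex Reinhardt domain, eliminate the negative powers, and control convergence on an unbounded region. I would handle the interchange by using only Parseval on each compact torus and then Tonelli over $R$, which needs no uniform convergence on the whole domain.
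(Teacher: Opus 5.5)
The paper does not prove this statement. It is quoted from Hayashida--Kamimoto [Theorem 2.1], so there is no in-paper argument to compare yours against. Your proposal is a correct, self-contained proof.

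\textbf{Steps 1--2.} These are the standard Reinhardt-domain mechanism.
\begin{itemize}
\item The Laurent expansion has vanishing negative coefficients because $0\in D_{a,1}$.
\item Parseval on each torus, followed by Tonelli, gives $\|f\|^2=\sum_p|c_p|^2\|z^p\|^2$.
\item This is exactly the fact the paper uses tacitly after Corollary~\ref{isom}, where it says the monomials form an orthogonal basis.
\item The Laurent step you flag as the main obstacle is covered by the standard theorem for arbitrary connected Reinhardt domains. No completeness or convexity is needed, and $D\cap(\mathbb C^*)^n$ is automatically connected. So nothing further is required there.
\end{itemize}

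\textbf{Step 3.} This uses the same polar reduction $s_k=r_k^2$ as the paper's Lemma~\ref{change_of_var}. Your two-sided comparison with $\int(1+s_n)^{|p|-(n-1)a}\,ds_n$ is sound. The fibre over $s_n$ is a product of intervals, each of length at most $2(1+s_n)^{-a}$ and contained in $[0,s_n+1)$. Each interval also contains $[s_n,s_n+(1+s_n)^{-a})$.

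\textbf{What your route buys.} It is economical: you never evaluate the norms, you only decide whether they are finite. It also handles all $a>0$ at once, including $a\le\frac{1}{n-1}$, where even $p=0$ fails.

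\textbf{Minor remarks.}
\begin{itemize}
\item In the lower bound, the restriction $s_n\ge2$ is needed only to compare $s_n^{|p|}$ with $(1+s_n)^{|p|}$. The fibre containment holds for every $s_n\ge 0$.
\item The ``diagonal curve'' in your connectedness argument is really the cone $\mathcal C=\{|z_1|=\cdots=|z_n|\}$. It is connected and lies in $D_{a,1}$, so the argument stands.
\item The assumption $n\ge2$ is implicit throughout, as in the statement.
\end{itemize}
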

To obtain our Sandwich Principle, we first establish that the Bergman spaces of the domains $D_{a,t}$ are independent of the value of the parameter $t > 0$ for a fixed value of $a>0$.
\begin{lemma}\label{change_of_var}
Let \(a>0\) and \(0<t_1\leq t_2\). For every multi-index
\(\alpha=(\alpha_1,\ldots,\alpha_n)\in\mathbb N_0^n\), if
\(z^\alpha\in A^2(D_{a,t_1})\), then
\(z^\alpha\in A^2(D_{a,t_2}),\)
where \(z^\alpha=z_1^{\alpha_1}\cdots z_n^{\alpha_n}\).
\end{lemma}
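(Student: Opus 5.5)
Since $D_{a,t_1}\subset D_{a,t_2}$, the implication cannot come from monotonicity of integrals; it goes the wrong way. The plan is instead a scaling comparison: show that $D_{a,t_2}$ is controlled by a dilated copy of $D_{a,t_1}$, up to a region where $z^\alpha$ is trivially integrable. We write the defining inequalities in polar-type variables $r_k=|z_k|^2$. By Fubini and the rotational invariance, $\int_{D_{a,t}}|z^\alpha|^2\,dV = \pi^n\int_{E_t} r^\alpha\,dr$, where
\[
E_t=\Bigl\{r\in(0,\infty)^n : |r_k-r_n|<\tfrac{t}{(1+r_n)^a},\ k=1,\ldots,n-1\Bigr\}.
\]
Finiteness thus reduces to a real integral over the region $E_t$, fibred over $r_n$. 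For fixed $r_n$, each $r_k$ ranges over an interval of length at most $2t(1+r_n)^{-a}$ centred at $r_n$ and truncated at $0$.

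The key step is the comparison of fibres. For $r_n$ large, say $r_n\geq R$, both $t_1(1+r_n)^{-a}$ and $t_2(1+r_n)^{-a}$ are less than $r_n/2$. On such fibres $r_k\in[r_n/2,\,3r_n/2]$, so $r^\alpha$ is comparable to $r_n^{|\alpha|}$ up to constants depending only on $\alpha$. The fibre volume is then exactly $(2t)^{n-1}(1+r_n)^{-a(n-1)}$. Hence on $\{r_n\geq R\}$,
\[
\int_{E_{t_2}\cap\{r_n\ge R\}} r^\alpha\,dr \le C\Bigl(\tfrac{t_2}{t_1}\Bigr)^{n-1}\int_{E_{t_1}\cap\{r_n\ge R\}} r^\alpha\,dr,
\]
with $C$ depending only on $\alpha$, because both sides are comparable to $\int_R^\infty r_n^{|\alpha|}(1+r_n)^{-a(n-1)}\,dr_n$ times the respective constants. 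In fact this shows both are finite exactly when $|\alpha|-a(n-1)<-1$. On the remaining piece $\{r_n<R\}$, the region $E_{t_2}$ is bounded: $r_k<R+t_2$. So the integrand is bounded there and the integral is finite.

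The step I expect to need the most care is the uniform comparison of $r^\alpha$ with $r_n^{|\alpha|}$ on the large-$r_n$ fibres and the choice of $R$ depending on $t_2$. It is routine once $R$ is chosen so that $t_2(1+R)^{-a}\le R/2$, which is possible since $a>0$. An alternative route is the substitution $r_k-r_n=s_k\,t(1+r_n)^{-a}$, which identifies both integrals with $t^{n-1}\int\!\!\int_{|s_k|<1}(\cdots)$. This makes the $t$-dependence explicit except near the truncation $r_k\geq 0$. That truncation only affects the bounded region, so the two approaches agree. Holomorphy is clear, so $z^\alpha\in A^2(D_{a,t_2})$.
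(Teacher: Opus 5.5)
Your argument is correct, but it takes a different route from the paper.

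\textbf{The paper's route.} The paper uses a single dilation. With $\lambda=t_2/t_1\ge 1$, the substitution $z=\sqrt{\lambda}\,w$ turns the condition $\bigl||z_k|^2-|z_n|^2\bigr|<t_2(1+|z_n|^2)^{-a}$ into
$\bigl||w_k|^2-|w_n|^2\bigr|<t_1(1+\lambda|w_n|^2)^{-a}\le t_1(1+|w_n|^2)^{-a}$.
This says $D_{a,t_2}\subset\sqrt{\lambda}\,D_{a,t_1}$. Homogeneity of $|z^\alpha|^2$ then gives the global estimate $\|z^\alpha\|^2_{A^2(D_{a,t_2})}\le\lambda^{|\alpha|+n}\|z^\alpha\|^2_{A^2(D_{a,t_1})}$, with no splitting of the domain.

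\textbf{Your route.} You split off the compact piece $\{r_n<R\}$ and compare untruncated fibres on the tail. There $r^\alpha$ lies between $2^{-|\alpha|}r_n^{|\alpha|}$ and $(3/2)^{|\alpha|}r_n^{|\alpha|}$, and the fibre volumes are exactly $(2t_i)^{n-1}(1+r_n)^{-a(n-1)}$. This is all sound. In particular, the cutoff $R$ chosen for $t_2$ automatically works for $t_1\le t_2$. Since the integral is finite, the lower bound on the $t_1$-tail also forces $\int_R^\infty r_n^{|\alpha|}(1+r_n)^{-a(n-1)}\,dr_n<\infty$. The tail comparison constant comes out as $3^{|\alpha|}(t_2/t_1)^{n-1}$.

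\textbf{What each buys.} The paper's version is shorter and quantitative on the whole domain. It exploits exactly two features: the monotonicity of $(1+s)^{-a}$ and the dilation-homogeneity of monomials. Your version yields only a tail comparison plus a non-quantitative bound on the compact part, but it gives more information. It computes the integrability threshold $|\alpha|<(n-1)a-1$ directly for every $t>0$. So it independently recovers which monomials lie in $A^2(D_{a,t})$, i.e.\ the monomial-membership content of Theorem~\ref{HK_integral_computation_theorem} and Corollary~\ref{BergmanspaceofDat}, instead of transporting it from $t=1$. It also does not rely on any scaling inclusion between the domains.
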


\begin{proof}
Set \( \lambda=\frac{t_2}{t_1}\geq 1\). By introducing polar coordinates, we have
\[
\|z^\alpha\|_{A^2(D_{a,t_2})}^2
=(2\pi)^n\int_0^\infty
r_n^{2\alpha_n+1}\prod_{k=1}^{n-1}
\left(\int_{\left|r_k^2-r_n^2\right|<\frac{t_2}{(1+r_n^2)^a}}r_k^{2\alpha_k+1}\,dr_k
\right)dr_n.
\]
We now perform the change of variables
\(
r_j=\sqrt{\lambda}\,R_j\), for \(j=1,\ldots,n\), making the above equal to 
 \begin{align*}
    &\lambda^{|\alpha|+n} (2\pi)^n \int_0^\infty R_n^{2\alpha_n+1} \prod_{k=1}^{n-1}\int_{\left| R_k^2-R_n^2\right|<\frac{t_1}{\bigl(1+\lambda R_n^2\bigr)^a}}R_k^{2\alpha_k+1}dR_kdR_n\\
    \leq~& \lambda^{|\alpha|+n} (2\pi)^n \int_0^\infty R_n^{2\alpha_n+1} \prod_{k=1}^{n-1}\int_{\left| R_k^2-R_n^2\right|<\frac{t_1}{\bigl(1+R_n^2\bigr)^a}} R_k^{2\alpha_k+1}dR_kdR_n\\
    =~& \lambda^{|\alpha|+n}||z^\alpha||_{A^2(D_{a,t_1})}^2<\infty.
\end{align*}
\end{proof}

Since \(D_{a,t_1}\subset D_{a,t_2}\) for \(0< t_1\leq t_2\), Lemma \ref{change_of_var} immediately yields
\begin{corollary}\label{isom}
Let \(a>0\) and \(0< t_1\leq t_2\). The natural restriction map induces an isomorphism 
\[\iota: A^2(D_{a,t_2})\to  A^2(D_{a,t_1}).\]
\end{corollary}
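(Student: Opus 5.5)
The restriction map $\iota$ is well-defined and linear, since $D_{a,t_1}\subset D_{a,t_2}$ and $\|f|_{D_{a,t_1}}\|_{D_{a,t_1}}\leq \|f\|_{D_{a,t_2}}$. It is injective by the identity theorem: a holomorphic function on the connected domain $D_{a,t_2}$ that vanishes on the nonempty open subset $D_{a,t_1}$ vanishes identically. What remains is surjectivity, and this is where Lemma \ref{change_of_var} enters. I will work in the finite-dimensional setting given by Theorem \ref{HK_integral_computation_theorem}, transported from $t=1$ to arbitrary $t$.

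The first step is to identify $A^2(D_{a,t})$ for general $t>0$. Both domains are Reinhardt, so every $f\in A^2(D_{a,t})$ has a Laurent expansion $\sum_\alpha c_\alpha z^\alpha$ on the domain, with $\alpha\in\mathbb Z^n$. Because $D_{a,t}$ contains points with every coordinate zero in the closure of its radial slices, $f$ actually extends holomorphically across the coordinate planes. One can also avoid that subtlety: the integrals over the torus orbits show that the Laurent monomials are mutually orthogonal in $L^2$, so each term $c_\alpha z^\alpha$ with $c_\alpha\neq0$ lies in $A^2(D_{a,t})$. Granting the characterization $A^2(D_{a,t})=\overline{\Span}\{z^\alpha : z^\alpha\in A^2(D_{a,t})\}$, the same one as in \cite{HayashidaKamimoto_2026} for $t=1$, surjectivity reduces to showing that the sets $M_t=\{\alpha: z^\alpha\in A^2(D_{a,t})\}$ do not depend on $t$.

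Lemma \ref{change_of_var} gives $M_{t_1}\subset M_{t_2}$. The reverse inclusion is clear from the domain inclusion: if $z^\alpha$ is square-integrable on $D_{a,t_2}$, it is square-integrable on the subset $D_{a,t_1}$. Hence $M_{t_1}=M_{t_2}$. Then every $f\in A^2(D_{a,t_1})$ has the form $\sum_{\alpha\in M}c_\alpha z^\alpha$, where $M$ is the common index set. For $a>\frac{1}{n-1}$, taking $t=1$ in Theorem \ref{HK_integral_computation_theorem} shows $M$ is finite, so this sum is a polynomial. It therefore extends to a holomorphic function on $D_{a,t_2}$, and that extension has finite norm because each monomial does. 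This gives a preimage under $\iota$. For $a\leq\frac{1}{n-1}$, both spaces are zero, since $M=\emptyset$, and the statement is trivial.

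The main obstacle is the Laurent-series step: justifying that an arbitrary $L^2$ holomorphic function on these non-complete Reinhardt domains is the $L^2$-convergent sum of its monomial components, and that only nonnegative exponents occur. I would handle this by averaging over the torus action. The function $f_\alpha(z)=(2\pi)^{-n}\int_{T^n} f(e^{i\theta}z)e^{-i\langle\alpha,\theta\rangle}\,d\theta$ equals $c_\alpha z^\alpha$ and lies in $A^2$ by Minkowski's inequality. Orthogonality then gives $\|f\|^2=\sum\|f_\alpha\|^2$. Negative exponents cannot contribute, because $|z_n|$ can be taken near $0$ while the other coordinates stay small, so $z^\alpha$ with some negative entry fails to be locally integrable or even holomorphic near the points of $D_{a,t}$ on the coordinate planes. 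For the finite-dimensional case, where Theorem \ref{HK_integral_computation_theorem} already applies, I would simply cite that theorem in its $t=1$ form together with the scaling of Lemma \ref{change_of_var}.
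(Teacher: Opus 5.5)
Your proposal is correct and follows the same route as the paper: the restriction is injective by the identity theorem, and Lemma \ref{change_of_var} together with the domain inclusion gives $t$-independence of the set of square-integrable monomials, which yields surjectivity. The paper only says that the lemma ``immediately yields'' the corollary, leaving implicit the steps you supply: the torus-averaging decomposition of $A^2$ of a Reinhardt domain containing the origin into monomials, and the finiteness of the common index set via Theorem \ref{HK_integral_computation_theorem}. That finiteness is what lets the lemma suffice, since its constant $\lambda^{|\alpha|+n}$ is not uniform in $\alpha$.
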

An important point here is that when the Bergman spaces are non-zero, $\iota$ is not an isometry unless $t_1=t_2$. However, since both \(D_{a,t_1}\) and \(D_{a,t_2}\) are Reinhardt, appropriate monomials from Theorem \ref{HK_integral_computation_theorem} form an orthogonal (but not orthonormal) basis for both vector spaces. An immediate corollary is the following, obtained by setting either $t_1$ or $t_2$ equal to $1$.
\begin{corollary}\label{isom_cor}
    If $\frac{2}{n-1}<a\leq\frac{3}{n-1}$ and $t>0$, then  $A^2(D_{a,t})={\Span}\{1,z_1,\ldots,z_n\}.$
\end{corollary}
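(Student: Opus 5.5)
The plan is to reduce Corollary \ref{isom_cor} to Theorem \ref{HK_integral_computation_theorem} via Corollary \ref{isom}, treating the cases \(t\geq 1\) and \(t\leq 1\) separately.

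First I compute the Bergman space of the reference domain. Take \(a\) with \(\frac{2}{n-1}<a\leq\frac{3}{n-1}\); this is the case \(k=1\) of Theorem \ref{HK_integral_computation_theorem}. Hence \(a>\frac{1}{n-1}\), and
\[
A^2(D_{a,1})=\operatorname{Span}\{z^p:|p|<(n-1)a-1\}.
\]
Since \(1<(n-1)a-1\leq 2\), the admissible multi-indices are exactly those with \(|p|\leq 1\). This gives \(A^2(D_{a,1})=\operatorname{Span}\{1,z_1,\ldots,z_n\}\), which is consistent with the dimension count \(\binom{n+1}{1}=n+1\).

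Next I pass from \(t=1\) to arbitrary \(t>0\).

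\emph{Case \(t\geq 1\).} Apply Corollary \ref{isom} with \(t_1=1\) and \(t_2=t\). The restriction map \(\iota:A^2(D_{a,t})\to A^2(D_{a,1})\) is then an isomorphism.

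\emph{Case \(t\leq 1\).} Apply Corollary \ref{isom} with \(t_1=t\) and \(t_2=1\). The restriction map \(A^2(D_{a,1})\to A^2(D_{a,t})\) is then an isomorphism.

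In both cases I still need the equality of spaces, not merely an abstract isomorphism. This follows from the following observations.

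\begin{enumerate}
\item A holomorphic function on the larger domain is determined by its restriction to the smaller domain by the identity theorem, since both are connected open sets.
\item Each monomial \(1,z_1,\ldots,z_n\) is defined on all of \(\mathbb C^n\).
\item Lemma \ref{change_of_var} shows these monomials lie in \(A^2(D_{a,t_2})\) once they lie in \(A^2(D_{a,t_1})\).
\end{enumerate}

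Now finish each case.

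\emph{Case \(t\geq 1\).} By (3), \(1,z_1,\ldots,z_n\in A^2(D_{a,t})\). Injectivity of \(\iota\) forces \(A^2(D_{a,t})\) to have dimension \(n+1\). Hence \(A^2(D_{a,t})\) equals the span of \(1,z_1,\ldots,z_n\).

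\emph{Case \(t\leq 1\).} Every element of \(A^2(D_{a,t})\) is the restriction of some element of \(A^2(D_{a,1})\), by surjectivity of the restriction map. That element is a polynomial of degree at most one. So \(A^2(D_{a,t})=\operatorname{Span}\{1,z_1,\ldots,z_n\}\), viewed as functions on \(D_{a,t}\).

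The only mildly delicate point is interpreting ``\(=\)'' as an equality of function spaces rather than an abstract isomorphism. The identity-theorem remark above handles this.
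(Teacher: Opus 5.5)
Your proof is correct and follows the paper's route. You specialize Theorem \ref{HK_integral_computation_theorem} to the case $k=1$ to get $A^2(D_{a,1})=\Span\{1,z_1,\ldots,z_n\}$, then pass to arbitrary $t$ through Corollary \ref{isom} with $t_1=1$ or $t_2=1$, which is exactly what the paper means by ``an immediate corollary''; you also make explicit the upgrade from isomorphism to equality of function spaces (via Lemma \ref{change_of_var} for $t\ge 1$ and surjectivity of restriction for $t\le 1$), which the paper leaves implicit.
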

The above now allows us to prove our Sandwich Principle. 
\begin{proof}[Proof of Theorem \ref{sandwich}]
By the monotonicity of the integrals of non-negative functions with respect to nested domains of integration we know that 
\[A^2(D_{a,t_2})\subset A^2(D)\subset A^2(D_{a,t_1}).\]
An application of Corollary \ref{isom} shows the equality \(A^2(D_{a,t_1})=A^2(D_{a,t_2})\) and thus equality everywhere in the above chain of inclusion.  Moreover, if $\frac{2}{n-1}<a\leq\frac{3}{n-1}$, then Corollary \ref{isom_cor} yields
\[A^2(D)={\Span}\{1,z_1,\ldots,z_n\}.\]
Due to the absence of an assumption of Reinhardt symmetry for $D$, the monomials $1,z_1,\ldots,z_n$ will in general not be orthogonal to each other and the Bergman map of $D$ will therefore differ from the map
\begin{equation}\label{non_bergman_map}
D\to \PBbb^n,\quad (z_1,\ldots,z_n)\mapsto [1:z_1:\ldots:z_n].
\end{equation}
However, by standard linear algebra, the map \eqref{non_bergman_map} will differ from the Bergman map at most by a projective linear transformation of $\PBbb^n$. This is enough to prove that the Bergman map satisfies all three properties in Definition \ref{non_deg_bergman_definitions}. It is therefore a holomorphic embedding onto an open subset of $\mathbb{P}^n$ and also an isometry with respect to the Bergman metric on $D$ and the Fubini-Study metric on $\mathbb{P}^n$. 
\end{proof}
\begin{remark}
Note that the projective linear transformation of $\PBbb^n$ arising at the end of the proof of Theorem \ref{sandwich} will in general not be an isometry with respect to the Fubini-Study metric of $\PBbb^n$, i.e., not be an element of \( \operatorname{PU}(n+1,\mathbb C)\) . However, we still have enough information for our Sandwich Principle to hold true, despite the absence of any symmetry assumption for $D$.
\end{remark}

\section{Biholomorphic rigidity, Proof of Theorems \ref{mthm1} and \ref{biholclassification}}\label{sec:rigidity}
In this section we begin our study of rigidity and prove Theorems \ref{mthm1} and \ref{biholclassification}.\par
First, we record the following additional corollary to Theorem \ref{HK_integral_computation_theorem} and Corollary \ref{isom}.
\begin{corollary}\label{BergmanspaceofDat}
    For \(n\geq 2\), \(a>\frac{1}{n-1}\) and \(t>0\),  the Bergman space of \(D_{a,t}\) is given by
\[
A^2(D_{a,t})=
\operatorname{Span}\{z^p : p\in \mathbb N_0^n,\ |p|<(n-1)a-1\}.
\]
\end{corollary}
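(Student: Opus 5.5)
The plan is to reduce the statement to the case \(t=1\), which is exactly Theorem \ref{HK_integral_computation_theorem}, and then transport the answer to arbitrary \(t>0\) by comparing \(D_{a,t}\) with \(D_{a,1}\) through the restriction isomorphism of Corollary \ref{isom}.

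First I will fix \(t>0\) and split into the cases \(t\le 1\) and \(t\ge1\), so that \(D_{a,t}\) and \(D_{a,1}\) are nested. If \(t\le 1\), Corollary \ref{isom} with \(t_1=t\), \(t_2=1\) shows that restriction \(A^2(D_{a,1})\to A^2(D_{a,t})\) is an isomorphism. If \(t\ge1\), the same corollary with \(t_1=1\), \(t_2=t\) shows that restriction \(A^2(D_{a,t})\to A^2(D_{a,1})\) is an isomorphism.

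Next I need to identify the two spaces as literally equal sets of functions, not merely isomorphic spaces. In both cases the smaller domain is an open subset of the larger one. Both domains are connected, and each contains the point \((1,\dots,1)\), which sits on the diagonal where every left-hand side vanishes. So by the identity theorem, restriction is injective, and every element of the larger Bergman space is the unique holomorphic extension of its restriction.

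I then combine this with Theorem \ref{HK_integral_computation_theorem}, which gives \(A^2(D_{a,1})=\operatorname{Span}\{z^p:|p|<(n-1)a-1\}\).

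When \(t\le1\), each element of \(A^2(D_{a,t})\) is the restriction of some polynomial \(P\) in that span. Conversely, every such monomial belongs to \(A^2(D_{a,1})\), and therefore it is square-integrable on the smaller domain \(D_{a,t}\).

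When \(t\ge1\), surjectivity of restriction says each \(f\in A^2(D_{a,t})\) agrees on \(D_{a,1}\) with a polynomial \(P\) in the span. By the identity theorem on the connected domain \(D_{a,t}\), \(f=P\) on all of \(D_{a,t}\). Conversely, Lemma \ref{change_of_var} gives that each monomial \(z^p\) with \(|p|<(n-1)a-1\) lies in \(A^2(D_{a,t})\).

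The only point needing care is this last one: upgrading the abstract isomorphism of Corollary \ref{isom} to an equality of function spaces. I expect it to be handled by the connectedness argument above, which uses that \(D_{a,t}\) is a domain, as implicitly assumed throughout. No new integral estimates should be required, since Lemma \ref{change_of_var} already supplies the integrability in the direction where it is not automatic.
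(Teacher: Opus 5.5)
Your proposal is correct and follows the paper's argument: the paper records this as an immediate consequence of Theorem~\ref{HK_integral_computation_theorem} (the case $t=1$) combined with the restriction isomorphism of Corollary~\ref{isom}, and your split into $t\le 1$ versus $t\ge 1$, with the identity theorem turning the isomorphism into an equality of function spaces, makes this explicit. One wording slip: for $t\ge 1$, the fact that $f|_{D_{a,1}}$ is a polynomial needs only monotonicity of the integral, not surjectivity of restriction; the surjectivity, namely that the monomials lie in $A^2(D_{a,t})$, is what Lemma~\ref{change_of_var} supplies.
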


Now we establish our general affine rigidity theorem, i.e., Theorem \ref{mthm1}
\begin{proof}[Proof of Theorem \ref{mthm1}]
Let
\[
N+1=\#\{\alpha\in\mathbb Z_{\ge0}^n:\ |\alpha|\le q\}=\binom{n+q}{q}.
\]
Choose an orthonormal basis
\(e_{i,0},\ldots,e_{i,N}\)
of \(A^2(\Omega_i)\), and put
\[
e_i(z):=\begin{pmatrix}
e_{i,0}(z), \ldots,
e_{i,N}(z)
\end{pmatrix}^T,
\qquad i=1,2.
\]
The associated Bergman maps are
\[
\Phi_i:\Omega_i\longrightarrow\mathbb P^N,
\qquad
\Phi_i(z)=[e_i(z)],
\qquad i=1,2.
\]
These maps are well-defined, because the constant function \(1\) belongs to
\(A^2(\Omega_i)\) and therefore not all elements of \(A^2(\Omega_i)\) can vanish at
any given point of \(\Omega_i\).

Now order the monomials \(z^\alpha\), \(|\alpha|\le q\), and write
\[
e^\prime(z):=\bigl(z^\alpha\bigr)_{|\alpha|\le q}.
\]
Since
\[
\{e_{i,0},\ldots,e_{i,N}\},\ i=1,2,
\text{ and }
\{z^\alpha:\ |\alpha|\le q\}
\]
are bases of the same vector space \(A^2(\Omega_i)=\mathcal P_{\leq q}\),
there exist matrices
\[
S_i\in GL(N+1,\mathbb C),\qquad i=1,2,
\]
such that
\[
e_i(z)=S_i e^\prime (z),
\qquad i=1,2.
\]
Consequently,
\[
\Phi_i(z)=[S_i e^\prime(z)]=[S_i]\,[e^\prime(z)],
\qquad i=1,2,
\]
where \([S_i]\) denotes the projective linear transformation of \(\mathbb P^N\) induced by the matrix \(S_i\).
Let
\[
\iota:\mathbb C^n\longrightarrow\mathbb P^n,
\qquad
\iota(z)=[1:z_1:\cdots:z_n]
\]
be the canonical embedding map, and let
\[
\nu_q:\mathbb P^n\longrightarrow\mathbb P^N
\]
be the \(q\)-th Veronese embedding, written on the affine chart \(x_0=1\)
as
\[
\nu_q([1:z_1:\cdots:z_n])
=[z^\alpha]_{|\alpha|\leq q}=[e^\prime(z)].
\]
Therefore
\[
\Phi_i=[S_i]\circ\nu_q\circ\iota,
\qquad i=1,2.
\]

Let
\[
F:\Omega_1\longrightarrow\Omega_2
\]
be a biholomorphism, and write
\[
J_F(z):=\det F'(z).
\]
The biholomorphic change-of-variables formula shows that the operator
\[
U_F:A^2(\Omega_2)\longrightarrow A^2(\Omega_1),
\qquad
U_F f(z):=J_F(z)f(F(z)),
\]
is unitary. Indeed,
\[
\begin{aligned}
\|U_Ff\|_{L^2(\Omega_1)}^2
&=\int_{\Omega_1} |J_F(z)|^2|f(F(z))|^2\,dV(z)\\
&=\int_{\Omega_2} |f(w)|^2\,dV(w)
=\|f\|_{L^2(\Omega_2)}^2.
\end{aligned}
\]
Thus there exists a unitary matrix \(M_F\in U(N+1,\mathbb C)\) such that
\[
J_F(z)e_2(F(z))=M_F e_1(z).
\]
Since \(J_F(z)\neq0\), projectivizing gives
\begin{equation}\label{phibiholo}
    \Phi_2(F(z))
=[e_2(F(z))]=[J_F(z)e_2(F(z))]
=[M_F e_1(z)]
=[M_F]\Phi_1(z).
\end{equation}
Note that equation (\ref{phibiholo}) means
\[
[M_F]\bigl(\Phi_1(\Omega_1)\bigr)
=\Phi_2(F(\Omega_1))
=\Phi_2(\Omega_2).
\]
Thus the projective linear transformation \([M_F]\) sends the Bergman image
\(\Phi_1(\Omega_1)\) onto the Bergman image \(\Phi_2(\Omega_2)\).

We now identify the Zariski closures of \(\Phi_i(\Omega_i)\). 
Since \(\Omega_i\) is a nonempty Euclidean open subset of
\(\mathbb C^n\), we have
\[
\overline{\iota(\Omega_i)}^{\,\mathrm{Zar}}=\mathbb P^n.
\]
Since \([S_i]\circ\nu_q:\mathbb P^n\to [S_i](\nu_q(\mathbb P^n))\) is an
isomorphism onto its closed image, we obtain
\[
\overline{\Phi_i(\Omega_i)}^{\,\mathrm{Zar}}
=[S_i](\nu_q(\mathbb P^n))=:X_i,
\qquad i=1,2.
\]
Since \(M_F\) is a homeomorphism and
\([M_F]\Phi_1(\Omega_1)=\Phi_2(\Omega_2),\)
we have
\[
[M_F](X_1)
=[M_F]\left(\overline{\Phi_1(\Omega_1)}^{\,\mathrm{Zar}}\right)
=\overline{\left([M_F]\Phi_1(\Omega_1)\right)}^{\,\mathrm{Zar}}
=\overline{\Phi_2(\Omega_2)}^{\,\mathrm{Zar}}
=X_2.
\]

By definition of \(X_i\), the map
\[
[S_i]\circ\nu_q:\mathbb P^n\longrightarrow X_i
\]
is an isomorphism of projective varieties. Therefore
\[
g:=
([S_2]\circ\nu_q)^{-1}
\circ [M_F]\left.\right|_{X_1}
\circ ([S_1]\circ\nu_q)
\]
is an automorphism of \(\mathbb P^n\). Hence
\[
g\in \operatorname{Aut}(\mathbb P^n)
=\operatorname{PGL}(n+1,\mathbb C),
\]
and
\[
([S_2]\circ\nu_q)\circ g
=[M_F]\left.\right|_{X_1}\circ ([S_1]\circ\nu_q).
\]
Consider the diagram 
\begin{equation}\label{commdiagrambiholo}
    \begin{tikzcd}
{\Omega_1} \arrow[r, "F"] \arrow[d, "\iota"']      & {\Omega_2} \arrow[d, "\iota"]          \\
\mathbb P^n \arrow[r, "g"] \arrow[d, "{[S_1]}\circ\nu_q"'] & \mathbb P^n \arrow[d, "{[S_2]}\circ\nu_q"]     \\
X_1 \arrow[r, "{\left.[M_F]\right|_{X_1}}"']               & X_2 .                                 
\end{tikzcd}
\end{equation}
Since
\[
\begin{aligned}
({[S_2]}\circ\nu_q)(\iota(F(z)))
&=
\Phi_2(F(z))  \\
&=
[M_F](\Phi_1(z)) \\
&=
[M_F]\bigl(({[S_1]}\circ\nu_q)(\iota(z))\bigr) \\
&=
({[S_2]}\circ\nu_q)\bigl(g(\iota(z))\bigr)
\end{aligned}
\]
and 
\({[S_2]}\circ\nu_q\)
is injective, it follows that
\begin{equation}\label{if=gi-biholo}
    \iota(F(z))=g(\iota(z))
\qquad
\text{for every } z\in \Omega_1.
\end{equation}

In other words, the diagram (\ref{commdiagrambiholo})
commutes.

Since
\(g\in \operatorname{PGL}(n+1,\mathbb C),\)
choose a matrix representative
\[G=
\begin{pmatrix}
d & c^T\\
b & A
\end{pmatrix}
\in GL(n+1,\mathbb C),
\]
where
\[
d\in\mathbb C,\quad c,b\in\mathbb C^n,\quad \text{and}\quad A\in M_n(\mathbb C).
\]
Then
\[
g([x_0:x])=
[d x_0+c^Tx:\, b x_0+Ax].
\]
Putting \(x_0=1\) and \(x=z\), equation (\ref{if=gi-biholo}) gives
\[
[1:F(z)]=[d+c^Tz:\, b+Az].
\]
Consequently, wherever \(d+c^Tz\neq0\),
\begin{equation}\label{linearfractional-biholo}
   F(z)=\frac{Az+b}{d+c^Tz}. 
\end{equation}
We now show that the denominator in (\ref{linearfractional-biholo}) is constant.
Set
\(
\ell(z):=d+c^Tz,
~
N(z):=Az+b,
\)
then
\[
F(z)=\frac{N(z)}{\ell(z)}.
\]

First note that
\[
\ell(z)\neq 0
\qquad
\text{for every }z\in\Omega_1.
\]
Indeed, the projective identity obtained above is
\[
[1:F(z)]=[\ell(z):N(z)].
\]
Since the first homogeneous coordinate of \([1:F(z)]\) is nonzero,
the first homogeneous coordinate \(\ell(z)\) of
\([\ell(z):N(z)]\) must also be nonzero.

Differentiating the fractional linear expression gives
\[
F'(z)
=
\frac{\ell(z)A-N(z)c^T}{\ell(z)^2}
=
\frac{A-F(z)c^T}{\ell(z)}.
\]
We claim that
\[
\det G
=
\ell(z)\det\bigl(A-F(z)c^T\bigr).
\]
Indeed, adding \(z_j\) times the \((j+1)\)-st column of \(G\)
to its first column, for \(j=1,\ldots,n\), gives
\[
\det G
=
\det
\begin{pmatrix}
\ell(z) & c^T\\
N(z) & A
\end{pmatrix}.
\]
Next, for each \(i=1,\ldots,n\), subtract
\(F_i(z)\) times the first row from the \((i+1)\)-st row.
Since \(N(z)=\ell(z)F(z)\), this yields
\[
\det G
=
\det
\begin{pmatrix}
\ell(z) & c^T\\
0 & A-F(z)c^T
\end{pmatrix}
=
\ell(z)\det\bigl(A-F(z)c^T\bigr).
\]
Consequently, \begin{equation}\label{J_Fz}
    J_F(z):=
\det F'(z)
=\frac{\det\bigl(A-F(z)c^T\bigr)}{\ell(z)^n}
=\frac{\det G}{\ell(z)^{n+1}}.
\end{equation}

On the other hand, since \(1\in A^2(\Omega_2)\), the unitary operator
\(U_F\) satisfies
\[
J_F(z)=U_F(1)\in A^2(\Omega_1)=\mathcal P_{\leq q}.
\]
Hence there exists a polynomial
\(p\in\mathcal P_{\leq q}\)
such that
\(
J_F(z)=p(z)
\text{ for every }z\in\Omega_1.
\)
Combining this identity with (\ref{J_Fz}), we obtain
\(
p(z)\ell(z)^{n+1}=\det G
\text{ for every }z\in\Omega_1.
\)
Both sides are polynomials on \(\mathbb C^n\). Since \(\Omega_1\)
is a nonempty open subset of \(\mathbb C^n\), the identity theorem
implies that
\begin{equation}\label{extension}
    p(z)\ell(z)^{n+1}=\det G
\qquad
\text{for every }z\in\mathbb C^n.
\end{equation}

Suppose that \(c\neq 0\). Then the nonconstant affine linear
function
\(\ell(z)=d+c^Tz\)
has a zero at some point \(z_0\in\mathbb C^n\). Evaluating
(\ref{extension}) at \(z_0\) gives
\(\det G=0,\)
contrary to \(G\in\operatorname{GL}(n+1,\mathbb C)\).
Therefore
\(c=0.\)

It follows that
\[
G=
\begin{pmatrix}
d & 0\\
b & A
\end{pmatrix}.
\]
Since \(G\) is invertible, we have
\(
d\neq 0
\text{ and }
A\in\operatorname{GL}(n,\mathbb C).
\)
After dividing the matrix by \(d\), we may write
\[
F(z)=Az+b,
\qquad
A\in GL(n,\mathbb C),\quad b\in\mathbb C^n.
\]
In other words, \(F\) is the restriction of an affine automorphism of
\(\mathbb C^n\).
\end{proof}
\begin{remark}
    A related rigidity statement appears in the recent work of
Bhat--Ganguly--Nandi--Xiao \cite{Bhat_Ganguly_Nandi_Xiao_2026}, but the
hypotheses and proof mechanism are different.  Their
Proposition~4.3 starts from the prescribed Fubini--Study-type kernel
\(C(1+\|z\|^2)^q\): Calabi rigidity first gives a projective unitary
map, and the special kernel formula is then used again to remove the
projective denominator and translation.  Our theorem begins only with
the vector space identity \(A^2(\Omega)=\mathcal P_{\leq q}\); the
Veronese model and the polynomial behavior of the Jacobian replace the
special-kernel argument.  The two results are not in a
containment relation: their proposition treats kernel-preserving maps
between subdomains and obtains unitary linearity, whereas our theorem
treats global biholomorphisms and obtains affine linearity.  In the
global setting, however, our result recovers the affine-rigidity step
under the weaker polynomial-space hypothesis.
\end{remark}\begin{proposition}\label{biholrigidity}
Let \(n\ge2\), let \(q\ge1\), and let
\(\Omega_1,\Omega_2\subset\mathbb C^n\) be domains satisfying
\(
A^2(\Omega_1)=A^2(\Omega_2)=\mathcal P_{\leq q}.
\)
Assume that, for some positive constants \(a_1,a_2,s,t\),
\(
D_{a_1,s}\subset \Omega_1,
\text{ and }
\Omega_2\subset D_{a_2,t}.\)
Then every biholomorphism
\(
F:\Omega_1\longrightarrow\Omega_2\)
is linear, i.e., there exists \(A\in GL(n,\mathbb C)\) such that
\(F(z)=Az.\)
\end{proposition}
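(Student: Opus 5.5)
By Theorem \ref{mthm1}, every biholomorphism \(F:\Omega_1\to\Omega_2\) is already of the form \(F(z)=Az+b\) with \(A\in GL(n,\mathbb C)\). So it only remains to show \(b=0\). The plan is to test \(F\) on a large explicit subset of \(D_{a_1,s}\) that goes off to infinity, and then use the shrinking width of \(D_{a_2,t}\) at infinity.

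\emph{Step 1 (a large subset of the source).} Let \(T=\{z:|z_1|=\cdots=|z_n|\}\). For \(z\in T\) the left-hand sides \(\bigl||z_k|^2-|z_n|^2\bigr|\) all vanish, so \(T\subset D_{a_1,s}\subset\Omega_1\). In particular, for every \(\theta=(\theta_1,\ldots,\theta_{n-1})\) the whole complex line
\[
L_\theta=\{\lambda v_\theta:\lambda\in\mathbb C\},\qquad v_\theta=(e^{i\theta_1},\ldots,e^{i\theta_{n-1}},1),
\]
lies in \(\Omega_1\). Hence \(u\lambda+b\in D_{a_2,t}\) for all \(\lambda\in\mathbb C\), where \(u=Av_\theta\).

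\emph{Step 2 (asymptotic analysis along a line).} For \(k\le n-1\), put
\[
h_k(\lambda)=|u_k\lambda+b_k|^2-|u_n\lambda+b_n|^2 .
\]
Membership in \(D_{a_2,t}\) gives \(|h_k(\lambda)|<t\bigl(1+|u_n\lambda+b_n|^2\bigr)^{-a_2}\le t\) for all \(\lambda\in\mathbb C\). Expanding, we have
\[
h_k(\lambda)=(|u_k|^2-|u_n|^2)|\lambda|^2+2\,\mathrm{Re}\bigl(\lambda(u_k\bar b_k-u_n\bar b_n)\bigr)+|b_k|^2-|b_n|^2 .
\]
Boundedness in \(\lambda\) forces \(|u_k|=|u_n|\) and \(u_k\bar b_k=u_n\bar b_n\). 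Since \(u\neq0\) (as \(A\) is invertible), we get \(u_n\neq0\). Then \(|u_n\lambda+b_n|\to\infty\), so the constant \(|b_k|^2-|b_n|^2\) must be \(0\), which gives \(|b_k|=|b_n|\) for all \(k\).

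\emph{Step 3 (contradiction if \(b\neq0\)).} Suppose \(b\neq0\). By Step 2 all \(|b_k|\) equal \(|b_n|\), so all \(b_k\neq0\). From \(u_k\bar b_k=u_n\bar b_n\) and \(|b_k|=|b_n|\) we obtain
\[
u_k=u_n\,\frac{\bar b_n}{\bar b_k}=u_n\,\frac{b_k}{b_n}.
\]
Thus \(Av_\theta=u\in\mathbb C\,b\) for every \(\theta\). But the vectors \(v_\theta\) span \(\mathbb C^n\): for instance, taking \(\theta_j\in\{0,\pi\}\) gives a spanning set when \(n\ge2\). This contradicts \(\operatorname{rank}A=n\ge2\). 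Hence \(b=0\) and \(F(z)=Az\).

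The main point requiring care is Step 2. One must extract all three coefficient conditions from the single bound \(|h_k|<t\). The decay factor \((1+|z_n|^2)^{-a_2}\) is genuinely needed only to kill the constant term \(|b_k|^2-|b_n|^2\), and that in turn uses \(u_n\neq0\). The rest is linear algebra.
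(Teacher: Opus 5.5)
Your argument is correct, and its outline matches the paper's. Both reduce to $b=0$ via Theorem~\ref{mthm1}, push points of the cone $\{|z_1|=\cdots=|z_n|\}\subset D_{a_1,s}$ to infinity inside the defining inequalities of $D_{a_2,t}$, and read off coefficients of the resulting quadratic.

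The difference is in how the coefficient information is extracted. The paper scales by real $\lambda>0$ along $\lambda u$, $u\in\mathbb T^n$. It therefore only obtains $\operatorname{Re}\bigl((R_ku)\overline{b_k}-(R_nu)\overline{b_n}\bigr)=0$, with $R_j$ the $j$-th row of $A$. It then needs a density/continuity step and uniqueness of Fourier coefficients on $\mathbb T^n$ to upgrade this to the row identity $\overline{b_k}R_k=\overline{b_n}R_n$. Linear independence of $R_k$ and $R_n$ then gives $b_k=b_n=0$. By allowing complex $\lambda$, you get the complex identity $u_k\overline{b_k}=u_n\overline{b_n}$ directly, with no harmonic analysis. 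You then finish with the constant term and a rank-one argument.

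Your last step can be shortened. The identity $(R_kv_\theta)\overline{b_k}=(R_nv_\theta)\overline{b_n}$ is complex-linear in $v_\theta$, and the $v_\theta$ span $\mathbb C^n$. So you get $\overline{b_k}R_k=\overline{b_n}R_n$ at once, hence $b=0$, without using $u_n\neq0$ or $|b_k|=|b_n|$. This shows your route needs only the uniform bound $t$ on the width of $D_{a_2,t}$, not its decay, contrary to your closing remark. The same holds for the paper's proof, which in the end uses only the linear coefficient, so boundedness would also spare it the density step.
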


\begin{proof}
By Theorem \ref{mthm1}, 
we only need to prove that \(b=0\).

Write \(R_j\) for the \(j\)-th row of \(A\), so that
\(
(Az)_j=R_jz.\)
Let \(u\) be a point on the standard $n$-torus \(\mathbb T^n\subset \mathbb C^n\). Then for every real number \(\lambda>0\),
\(\lambda u\in D_{a_1,s}\subset\Omega_1.\)
Since \(F(\Omega_1)=\Omega_2\subset D_{a_2,t}\), we have
\[
F(\lambda u)=\lambda Au+b\in D_{a_2,t}.
\]
Therefore, for each \(k=1,\ldots,n-1\),
\begin{equation}\label{sandwich-polynomial-estimate}
\left|
|\lambda R_ku+b_k|^2-|\lambda R_nu+b_n|^2
\right|
<
\frac{t}{(1+|\lambda R_nu+b_n|^2)^{a_2}}.
\end{equation}

Consider first the set
\[
U:=\{u\in\mathbb T^n:\ R_nu\neq0\}.
\]
This set is dense in \(\mathbb T^n\), because \(R_n\neq0\) and a nonzero linear form cannot vanish
on a nonempty open subset of \(\mathbb T^n\). For such \(u\), the right-hand side of
\eqref{sandwich-polynomial-estimate} tends to \(0\) as \(\lambda\to+\infty\).

The expression inside the absolute value on the left-hand side of
\eqref{sandwich-polynomial-estimate} is the real polynomial
\[
\begin{aligned}
p_{k,u}(\lambda)
&:=
|\lambda R_ku+b_k|^2-|\lambda R_nu+b_n|^2\\
&=
\lambda^2\left(|R_ku|^2-|R_nu|^2\right)
+2\lambda\operatorname{Re}\left((R_ku)\overline{b_k}-(R_nu)\overline{b_n}\right)
+|b_k|^2-|b_n|^2.
\end{aligned}
\]
Since
\(
|p_{k,u}(\lambda)|\longrightarrow0
\text{ as }\lambda\to+\infty,
\)
the polynomial \(p_{k,u}\) must be identically zero. In particular,
\begin{equation}\label{linear-coefficient-vanishing}
\operatorname{Re}\left((R_ku)\overline{b_k}-(R_nu)\overline{b_n}\right)=0.
\end{equation}
By density and continuity, \eqref{linear-coefficient-vanishing} holds for every
\(u\in\mathbb T^n\).

Write
\[
R_k=(a_{k1},\ldots,a_{kn}),
\qquad
R_n=(a_{n1},\ldots,a_{nn}).
\]
Then \eqref{linear-coefficient-vanishing} says
\[
\operatorname{Re}
\sum_{j=1}^n
\left(a_{kj}\overline{b_k}-a_{nj}\overline{b_n}\right)u_j
=0
\qquad
\text{for every }u=(u_1,\ldots,u_n)\in\mathbb T^n.
\]
By uniqueness of Fourier coefficients on \(\mathbb T^n\), we get
\[
a_{kj}\overline{b_k}=a_{nj}\overline{b_n},
\qquad
j=1,\ldots,n.
\]
Equivalently,
\[
\overline{b_k}R_k=\overline{b_n}R_n.
\]
Since \(A\in GL(n,\mathbb C)\), its rows are linearly independent. Hence \(R_k\) and \(R_n\)
are linearly independent, and therefore
\(b_k=b_n=0.\)
This holds for every \(k=1,\ldots,n-1\). Hence
\(b=0.\)

Consequently
\(
F(z)=Az
\)
with \(A\in GL(n,\mathbb C)\). Thus every biholomorphism
\(F:\Omega_1\longrightarrow\Omega_2\) is linear.
\end{proof}
To describe automorphism groups more precisely, we resort to the concept of horizon cones.
\begin{definition}\cite{Variational Analysis}
    For an arbitrary set
\(E\subset\mathbb C^n\), the cone
\[
E^\infty
:=
\left\{
v\in\mathbb C^n:
\begin{array}{l}
\text{there exist }z^\nu\in E\text{ and }\lambda_\nu>0\text{ such that}\\[2mm]\lambda_\nu\to0
\text{ and }\lambda_\nu z^\nu\to v
\end{array}
\right\}
\] is called the horizon cone of \(E\).
\end{definition}
This cone records the directions in which \(E\) extends to infinity.

Now set
\(\mathcal C:=\left\{
v\in\mathbb C^n:|v_1|=\cdots=|v_n|\right\}.\)

\begin{lemma}\label{horizoncone}
For every \(a,t>0\),
\(D_{a,t}^{\infty}=\mathcal C.\)
\end{lemma}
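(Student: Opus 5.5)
We prove the two inclusions \(D_{a,t}^\infty\subset\mathcal C\) and \(\mathcal C\subset D_{a,t}^\infty\) separately. Both rest on the defining inequalities of \(D_{a,t}\): the radii \(|z_k|\) are forced to be close to \(|z_n|\), with an error that stays bounded and in fact shrinks as \(|z_n|\to\infty\).

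For \(\mathcal C\subset D_{a,t}^\infty\), we fix \(v\in\mathcal C\) and first treat \(v=0\). Since \(0\in D_{a,t}\) (the left side vanishes and the right side equals \(t>0\)), we may take \(z^\nu=0\) and \(\lambda_\nu=1/\nu\). If \(v\neq 0\), then \(|v_1|=\cdots=|v_n|=r>0\). We take \(z^\nu=\nu v\) and \(\lambda_\nu=1/\nu\). Every difference \(|z^\nu_k|^2-|z^\nu_n|^2\) vanishes, so \(z^\nu\in D_{a,t}\), and \(\lambda_\nu z^\nu=v\) exactly. Hence \(v\in D_{a,t}^\infty\).

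For \(D_{a,t}^\infty\subset\mathcal C\), let \(z^\nu\in D_{a,t}\) and \(\lambda_\nu\to0\) with \(\lambda_\nu z^\nu\to v\). For each \(k\le n-1\) the defining inequality together with \((1+|z_n|^2)^{-a}\le 1\) gives
\[
\bigl||z^\nu_k|^2-|z^\nu_n|^2\bigr|<t .
\]
Multiplying by \(\lambda_\nu^2\) yields
\[
\bigl||\lambda_\nu z^\nu_k|^2-|\lambda_\nu z^\nu_n|^2\bigr|<\lambda_\nu^2t\to0 .
\]
The left side converges to \(\bigl||v_k|^2-|v_n|^2\bigr|\) by continuity, so \(|v_k|=|v_n|\) for all \(k\), and therefore \(v\in\mathcal C\).

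Neither direction has a genuine obstacle. The only points needing care are using the crude bound \(t\), rather than the decaying one, so that no case distinction on \(|z_n^\nu|\) is required, and handling \(v=0\) in the first inclusion.
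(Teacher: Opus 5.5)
Your proof is correct and follows the paper's argument essentially verbatim: the crude bound $\bigl||z_k^\nu|^2-|z_n^\nu|^2\bigr|<t$ scaled by $\lambda_\nu^2$ for one inclusion, and $z^\nu=\nu v$, $\lambda_\nu=1/\nu$ for the other. Your separate treatment of $v=0$ is harmless but unnecessary, since $z^\nu=\nu v=0$ already lies in $D_{a,t}$ and is covered by the general case.
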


\begin{proof}
Let \(v\in D_{a,t}^{\infty}\). By definition, there exist
\(z^\nu\in D_{a,t}\) and positive numbers \(\lambda_\nu\to0\) such that
\(\lambda_\nu z^\nu\longrightarrow v.\)
Since \(z^\nu\in D_{a,t}\), for every \(k=1,\ldots,n-1\),
\[\left|
|z_k^\nu|^2-|z_n^\nu|^2\right|<
\frac{t}{(1+|z_n^\nu|^2)^a}.
\]
Multiplying this inequality by \(\lambda_\nu^2\), we obtain
\[\left||\lambda_\nu z_k^\nu|^2-|\lambda_\nu z_n^\nu|^2
\right|<\frac{t\lambda_\nu^2}
{(1+|z_n^\nu|^2)^a}\leq t\lambda_\nu^2.
\]
Letting \(\nu\to\infty\) gives
\(|v_k|^2=|v_n|^2,~ k=1,\ldots,n-1.\)
Hence
\(|v_1|=\cdots=|v_n|,\)
and therefore \(v\in\mathcal C\). This proves that
\(D_{a,t}^{\infty}\subset\mathcal C.\)

Conversely, let \(v\in\mathcal C\). Define
\(z^\nu:=\nu v,\text{ and }\lambda_\nu:=\frac1\nu.\)
Since \(|v_k|=|v_n|\)
for every \(k=1,\ldots,n-1\), we have
\[|z_k^\nu|^2-|z_n^\nu|^2=\nu^2\bigl(|v_k|^2-|v_n|^2\bigr)
=0.
\]
It follows that
\(z^\nu\in D_{a,t}\)
for every \(\nu\), while
\(\lambda_\nu z^\nu=v.\)
Thus \(v\in D_{a,t}^{\infty}\), and consequently
\(
\mathcal C\subset D_{a,t}^{\infty}.
\)
\end{proof}

\begin{lemma}\label{horizonlinear}
Let \(E\subset\mathbb C^n\), and let
\(A\in GL(n,\mathbb C)\). Then
\((AE)^\infty=A(E^\infty).\)
\end{lemma}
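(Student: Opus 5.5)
The plan is to prove the two inclusions \((AE)^\infty\subset A(E^\infty)\) and \(A(E^\infty)\subset(AE)^\infty\) directly from the definition of the horizon cone. The only facts needed are that \(A\) is linear, so it commutes with multiplication by positive scalars, and that both \(A\) and \(A^{-1}\) are continuous on \(\mathbb C^n\).

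For the inclusion \(A(E^\infty)\subset(AE)^\infty\), we start with \(v\in E^\infty\). We choose \(z^\nu\in E\) and \(\lambda_\nu>0\) with \(\lambda_\nu\to0\) and \(\lambda_\nu z^\nu\to v\). Then \(w^\nu:=Az^\nu\in AE\), and by linearity \(\lambda_\nu w^\nu=A(\lambda_\nu z^\nu)\). By continuity of \(A\), this converges to \(Av\). With the same scalars \(\lambda_\nu\), this exhibits \(Av\in(AE)^\infty\).

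For the reverse inclusion, we apply the inclusion just proved with \(A^{-1}\in GL(n,\mathbb C)\) and the set \(AE\) in place of \(A\) and \(E\). This gives \(A^{-1}\bigl((AE)^\infty\bigr)\subset(A^{-1}AE)^\infty=E^\infty\). Applying \(A\) to both sides yields \((AE)^\infty\subset A(E^\infty)\), which completes the proof.

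There is no real obstacle. The only point needing care is that the scaling factors \(\lambda_\nu\) must remain positive reals tending to \(0\). They do, since the sequence is transported unchanged, with no renormalization. Invertibility of \(A\) is used only to obtain the reverse inclusion via \(A^{-1}\).
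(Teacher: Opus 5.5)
Your proof is correct and follows the paper's argument: you transport the defining sequence by \(A\) to get \(A(E^\infty)\subset(AE)^\infty\). You then apply the same inclusion to \(A^{-1}\) and the set \(AE\), and apply \(A\), to obtain the reverse inclusion.
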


\begin{proof}
Suppose first that \(v\in E^\infty\). Then there exist
\(z^\nu\in E\) and \(\lambda_\nu\to0\) such that
\(
\lambda_\nu z^\nu\to v.
\)
Since \(Az^\nu\in AE\), we have
\[
\lambda_\nu Az^\nu=A(\lambda_\nu z^\nu)\longrightarrow Av.
\]
Thus
\(
Av\in(AE)^\infty,
\)
and hence
\(
A(E^\infty)\subset(AE)^\infty.
\)

Applying the same argument to \(A^{-1}\) and the set \(AE\), we obtain
\(A^{-1}\bigl((AE)^\infty\bigr)\subset
E^\infty.
\)
Multiplying by \(A\) gives
\((AE)^\infty\subset A(E^\infty).\)
Therefore
\((AE)^\infty=A(E^\infty).\)
\end{proof}
Now let \(\Omega_1,\Omega_2\) satisfy the hypotheses of
Theorem~\ref{biholclassification}, and let
\(A:\Omega_1\to\Omega_2\) be the linear biholomorphism obtained from
Proposition~\ref{biholrigidity}. By monotonicity of horizon cones and
Lemma~\ref{horizoncone},
\[
\Omega_1^\infty=\mathcal C=\Omega_2^\infty.
\]
Since \((A\Omega_1)^\infty=A(\Omega_1^\infty)\), it follows that
\[
A(\mathcal C)=\mathcal C.
\]
The classification therefore reduces to the following linear preserver
problem.
\begin{lemma}\label{keylemma}
Let \(n\geq3\), and let \(A\in GL(n,\mathbb C)\) satisfy
\(A\left(\mathcal C\right)=\mathcal C.\)
Then
\(A=\rho DP,\)
where \(\rho>0\), \(D\) is a diagonal unitary matrix, and \(P\) is a
permutation matrix.
\end{lemma}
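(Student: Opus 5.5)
The plan is to turn the geometric condition \(A(\mathcal C)=\mathcal C\) into identities among the entries of \(A\), using Fourier coefficients on \(\mathbb T^n\) as in the proof of Proposition~\ref{biholrigidity}, and then to use \(n\ge3\) together with invertibility to force \(A\) to be a monomial matrix. Only the inclusion \(A(\mathcal C)\subset\mathcal C\) should be needed.

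\emph{Step 1: equal moduli of the rows.} Write \(R_j=(a_{j1},\ldots,a_{jn})\) for the \(j\)-th row of \(A\). Since \(\mathbb T^n\subset\mathcal C\), we have \(Au\in\mathcal C\) for every \(u\in\mathbb T^n\). Hence
\[
|R_ju|^2=|R_ku|^2\qquad\text{for all } j,k \text{ and all } u\in\mathbb T^n.
\]
On the torus we can expand
\[
|R_ju|^2=\sum_{l}|a_{jl}|^2+\sum_{l\neq m}a_{jl}\overline{a_{jm}}\,u_l\overline{u_m}.
\]
The characters \(u_l\overline{u_m}\) with \(l\neq m\) are distinct and nonconstant. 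Uniqueness of Fourier coefficients therefore gives two families of identities:
\[
\|R_j\|=\|R_k\|\qquad\text{and}\qquad a_{jl}\overline{a_{jm}}=a_{kl}\overline{a_{km}}=:c_{lm}\quad(l\neq m),
\]
valid for all \(j,k\). Note that the diagonal terms all collapse to a single constant, which is why we only obtain equal norms there and not equality of the individual \(|a_{jl}|\).

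\emph{Step 2: each row has exactly one nonzero entry.} This is the main step, and it is where \(n\ge3\) enters. Suppose some row has two nonzero entries, in positions \(l\neq m\). Then \(c_{lm}\neq0\), so \(a_{kl}\neq0\) for every row \(k\). For every \(p\neq l\), the identity \(\overline{a_{kp}}=c_{lp}/a_{kl}\) shows that the restriction of \(R_k\) to the columns \(p\neq l\) equals \(\overline{a_{kl}}^{\,-1}\) times a vector \(w\) that does not depend on \(k\). Consequently every row lies in \(\operatorname{Span}\{e_l,w\}\), so \(\operatorname{rank}A\le2<n\). This contradicts \(A\in\operatorname{GL}(n,\mathbb C)\). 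So every row has at most one nonzero entry, and invertibility forces exactly one.

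\emph{Step 3: conclusion.} Let \(\sigma(j)\) be the position of the nonzero entry in row \(j\). Invertibility forces \(\sigma\) to be injective, hence a permutation. By Step 1, \(|a_{j\sigma(j)}|=\|R_j\|=\rho\) is the same for all \(j\), and \(\rho>0\). Writing \(a_{j\sigma(j)}=\rho e^{i\theta_j}\), we get \(A=\rho DP\) with \(D=\operatorname{diag}(e^{i\theta_j})\) unitary and \(P\) the permutation matrix of \(\sigma\).

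For \(n=2\) the rank bound in Step 2 gives no contradiction, since rank \(2\) is allowed. This is consistent with the paper's Remark~\ref{n3_remark}, which explains why \(n\ge3\) is essential.
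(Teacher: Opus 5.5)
Your proof is correct and follows the paper's route. It uses the same Fourier-coefficient comparison on \(\mathbb T^n\), giving row-independent products \(a_{jl}\overline{a_{jm}}\) and equal row norms, and then uses invertibility together with \(n\ge3\) to exclude rows with two nonzero entries. Your Step 2 is a streamlined version of the paper's argument. Writing each row as \(a_{kl}\mathbf e_l+\overline{a_{kl}}^{\,-1}w\) (with \(w_l=0\)) bounds \(\operatorname{rank}A\le 2\) in one stroke. The paper instead first shows that all rows share a common support \(S\), and then treats separately the case \(|S|\ge3\), where the rows are proportional via \(b_j\overline{b_k}=1\), and the case \(|S|=2\), where the rank is at most \(2\).
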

\begin{proof}
Write
\[
A=
\begin{pmatrix}
a^{(1)}\\
\vdots\\
a^{(n)}
\end{pmatrix},
\qquad
a^{(r)}
=
\bigl(a_1^{(r)},\ldots,a_n^{(r)}\bigr),
\]
where \(a^{(r)}\) denotes the \(r\)-th row of \(A\).

Let
\[
\mathbb T^n=\left\{\zeta=(\zeta_1,\ldots,\zeta_n)\in\mathbb C^n:
|\zeta_1|=\cdots=|\zeta_n|=1\right\}.
\]
Since \(\mathbb T^n\subset\mathcal C\) and \(A(\mathcal C)=\mathcal C\),
the coordinates of \(A\zeta\) have equal modulus for every
\(\zeta\in\mathbb T^n\). Thus, for every pair \(r,s\in\{1,\ldots,n\}\),
\[\left|
\sum_{j=1}^n a_j^{(r)}\zeta_j
\right|^2=\left|\sum_{j=1}^n a_j^{(s)}\zeta_j
\right|^2\qquad
\text{for all }\zeta\in\mathbb T^n.
\]
Expanding both sides gives
\[
\sum_{j=1}^n |a_j^{(r)}|^2+
\sum_{\substack{j,k=1\\ j\neq k}}^n
a_j^{(r)}\overline{a_k^{(r)}}\,
\zeta_j\overline{\zeta_k}=
\sum_{j=1}^n |a_j^{(s)}|^2+
\sum_{\substack{j,k=1\\ j\neq k}}^n
a_j^{(s)}\overline{a_k^{(s)}}\,
\zeta_j\overline{\zeta_k}.
\]
 Comparing Fourier
coefficients therefore yields
\begin{equation}\label{eq:off-diagonal-coefficients}
a_j^{(r)}\overline{a_k^{(r)}}=
a_j^{(s)}\overline{a_k^{(s)}},
\qquad
j\neq k,
\end{equation}
and comparison of the constant terms gives
\begin{equation}\label{eq:row-norms}
\sum_{j=1}^n |a_j^{(r)}|^2=
\sum_{j=1}^n |a_j^{(s)}|^2.
\end{equation}

We claim that every off-diagonal coefficient in
\eqref{eq:off-diagonal-coefficients} must vanish. Suppose, to the
contrary, that there exist distinct indices \(p,q\) such that
\(a_p^{(r)}\overline{a_q^{(r)}}\neq 0 \)
for some row \(r\). By \eqref{eq:off-diagonal-coefficients}, the value of
this product is independent of the row, and hence
\(a_p^{(s)}\neq 0,~a_q^{(s)}\neq 0\)
for every \(s=1,\ldots,n\).

We first show that all rows of \(A\) have the same support. Fix an index
\(\ell\not = p\). If
\(a_\ell^{(r_0)}=0 \)
for some row \(r_0\), then
\(a_p^{(r_0)}\overline{a_\ell^{(r_0)}}=0.\)
By \eqref{eq:off-diagonal-coefficients},
\(a_p^{(s)}\overline{a_\ell^{(s)}}=0\)
for every \(s\). Since \(a_p^{(s)}\neq 0\), it follows that
\(a_\ell^{(s)}=0\)
for every \(s\). Conversely, if \(a_\ell^{(r_0)}\neq 0\) for some row
\(r_0\), then
\(a_p^{(r_0)}\overline{a_\ell^{(r_0)}}\neq 0,\)
and hence \eqref{eq:off-diagonal-coefficients} implies
\(a_\ell^{(s)}\neq 0\)
for every \(s\). Thus all rows have a common support, say
\(S\subset\{1,\ldots,n\}.\)
Since \(p,q\in S\), we have \(|S|\geq 2\).

Suppose first that \(|S|\geq 3\). Fix two rows \(r\) and \(s\), and for
each \(j\in S\) define
\(b_j:=a_j^{(s)}/{a_j^{(r)}}.\)
For distinct \(j,k\in S\), equation
\eqref{eq:off-diagonal-coefficients} gives
\(a_j^{(s)}\overline{a_k^{(s)}}=
a_j^{(r)}\overline{a_k^{(r)}},
\)
and hence
\(b_j\overline{b_k}=1.\)
Choose three distinct indices \(j,k,\ell\in S\). From
\(
b_j\overline{b_k}=1\text{ and }
b_j\overline{b_\ell}=1
\)
we obtain \(b_k=b_\ell\). Similarly, using
\(
b_k\overline{b_\ell}=1
\text{ and } b_j\overline{b_\ell}=1,
\)
we obtain \(b_j=b_k\). It follows that all \(b_j\), \(j\in S\), are
equal to a common scalar \(b\). Moreover,
\(b\overline b=1,\)
so \(|b|=1\). Therefore
\(a^{(s)}=b\,a^{(r)}.
\)
Since \(r\) and \(s\) were arbitrary, all rows of \(A\) are proportional.
This contradicts the invertibility of \(A\).

Suppose now that \(|S|=2\). Then every row of \(A\) is supported in the
same two coordinate positions. Hence the row space of \(A\) has dimension
at most \(2\), so
\(\operatorname{rank}A\leq 2<n,\)
again contradicting \(A\in GL(n,\mathbb C)\).

These contradictions show that
\(a_j^{(r)}\overline{a_k^{(r)}}=0
\text{ for every }r\text{ and every }j\neq k.
\)
Consequently, each row of \(A\) contains at most one nonzero entry.
Since \(A\) is invertible, every row contains exactly one nonzero entry.
Moreover, the nonzero entries must occur in distinct columns; otherwise
two rows would be supported in the same column and would therefore be
linearly dependent. Thus every column also contains exactly one nonzero
entry, and \(A\) is a monomial matrix.

Finally, by \eqref{eq:row-norms}, all rows of \(A\) have the same
Euclidean norm. Since each row contains exactly one nonzero entry, all
nonzero entries of \(A\) have the same modulus, say \(\rho>0\). Hence
\(A=\rho DP,\)
where \(P\) is a permutation matrix and \(D\) is a diagonal matrix whose
diagonal entries have modulus one. Thus \(D\) is unitary.
\end{proof}
\begin{remark}\label{n3_remark}
The assumption \(n\geq 3\) in Lemma \ref{keylemma}, and thus in Theorems \ref{biholclassification} and \ref{mthm2}, is essential for us.
Indeed, when \(n=2\), the conclusion of that lemma is false.
Let
\(
\mathcal C=\{(z_1,z_2)\in \mathbb C^2: |z_1|=|z_2|\}.\)
Consider the matrix
\(
A=\begin{pmatrix}
\frac54 & \frac34\\[2mm]
\frac34 & \frac54
\end{pmatrix}.
\)
Then \(A\in GL(2,\mathbb C)\), but not a monomial matrix. 
We claim that \(A(\mathcal C)=\mathcal C\).  Write \(w=Az\), so that
\[
w_1=\frac54 z_1+\frac34 z_2,\qquad
w_2=\frac34 z_1+\frac54 z_2 .
\]
A direct computation gives
\[
\begin{aligned}
|w_1|^2-|w_2|^2
&=
\left|\frac54 z_1+\frac34 z_2\right|^2
-
\left|\frac34 z_1+\frac54 z_2\right|^2  \\
&=
\left(\frac{25}{16}-\frac{9}{16}\right)|z_1|^2
-
\left(\frac{25}{16}-\frac{9}{16}\right)|z_2|^2  \\
&=
|z_1|^2-|z_2|^2 .
\end{aligned}
\]
Thus \(A\) preserves the Hermitian quadratic form
\(Q(z)=|z_1|^2-|z_2|^2.\)
Since
\(\mathcal C=\{z\in\mathbb C^2: Q(z)=0\},\)
it follows that \(A(\mathcal C)\subset \mathcal C\).  Because \(A\) is invertible and the same
identity \(Q(Az)=Q(z)\) implies \(Q(A^{-1}z)=Q(z)\), we also have
\(\mathcal C\subset A(\mathcal C)\).  Hence \(A(\mathcal C)=\mathcal C\).
\end{remark}
\begin{lemma}\label{samebergmansp}
Let \(n\geq2\), and let
\(a_1,a_2>\frac{2}{n-1}.\)
For \(j=1,2\), let \(\Omega_j\subset\mathbb C^n\) be an
\(a_j\)-sandwichable domain. If there exists a biholomorphism
\(F:\Omega_1\longrightarrow\Omega_2,\)
then there exists an integer \(q\geq1\) such that the two domains have the same polynomial Bergman model
\(\mathcal P_{\leq q}\).
\end{lemma}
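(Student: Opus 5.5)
By the Sandwich Principle (Theorem \ref{sandwich}) together with Corollary \ref{BergmanspaceofDat}, each \(\Omega_j\) has Bergman space
\[
A^2(\Omega_j)=\operatorname{Span}\{z^p:\ |p|<(n-1)a_j-1\}=\mathcal P_{\leq q_j},
\]
where \(q_j\) is the largest integer strictly less than \((n-1)a_j-1\). The hypothesis \(a_j>\frac{2}{n-1}\) gives \((n-1)a_j-1>1\). Hence \(q_j\geq1\), and in particular \(A^2(\Omega_j)\supset\operatorname{Span}\{1,z_1,\ldots,z_n\}\). It therefore suffices to show that \(q_1=q_2\), and we may then take \(q:=q_1=q_2\).

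To compare \(q_1\) and \(q_2\) I would use the unitary operator \(U_F f=J_F\cdot(f\circ F)\) from the proof of Theorem \ref{mthm1}. It shows that \(\dim A^2(\Omega_1)=\dim A^2(\Omega_2)\). Since \(\dim\mathcal P_{\leq q}=\binom{n+q}{q}\) is strictly increasing in \(q\) for \(n\geq1\), this equality of dimensions forces \(q_1=q_2\).

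I expect no serious obstacle. The only points to check are the following.
\begin{enumerate}
\item The Sandwich Principle applies to each \(\Omega_j\) with its own parameter \(a_j\) and some \(t\)-values \(t_1\leq t_2\). This is exactly the definition of \(a_j\)-sandwichable.
\item The strict inequality \(|p|<(n-1)a_j-1\) really describes the full space \(\mathcal P_{\leq q_j}\) of all polynomials of degree at most \(q_j\). This holds because the index condition depends only on \(|p|\).
\end{enumerate}
No linearity of \(F\) is needed at this stage. Once the common model \(\mathcal P_{\leq q}\) is established, Proposition \ref{biholrigidity} and Lemma \ref{keylemma} can be applied.
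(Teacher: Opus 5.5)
Your proposal is correct and follows essentially the same route as the paper. You identify \(A^2(\Omega_j)=\mathcal P_{\leq q_j}\) via the Sandwich Principle and Corollary~\ref{BergmanspaceofDat}; your \(q_j\), the largest integer below \((n-1)a_j-1\), is equivalent to the paper's condition \(\frac{q_j+1}{n-1}<a_j\leq\frac{q_j+2}{n-1}\). You then use the unitary operator \(U_F\) and the strict monotonicity of \(\binom{n+q}{q}\) to force \(q_1=q_2\), exactly as the paper does.
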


\begin{proof}
Since \(\Omega_j\) is \(a_j\)-sandwichable, there exist positive
numbers \(t_j^-\leq t_j^+\) such that
\[
D_{a_j,t_j^-}
\subset
\Omega_j
\subset
D_{a_j,t_j^+},
\qquad j=1,2.
\]

For each \(j=1,2\), let \(q_j\geq1\) be the unique integer satisfying
\[
\frac{q_j+1}{n-1}<
a_j
\leq
\frac{q_j+2}{n-1}.
\]
By Corollary~\ref{BergmanspaceofDat} and
Theorem~\ref{sandwich}, we therefore have
\[
A^2(\Omega_j)
=\mathcal P_{\leq q_j},
\qquad j=1,2.
\]
In particular,
\[
\dim A^2(\Omega_j)
=
\dim\mathcal P_{\leq q_j}
=
\binom{n+q_j}{q_j}.
\]

Let \(J_F(z)=\det F'(z),\) then the biholomorphic change-of-variables formula shows that
\[
U_F:A^2(\Omega_2)\longrightarrow A^2(\Omega_1),
\qquad
U_F f(z):=J_F(z)f(F(z)),
\]
is unitary. Hence
\(\dim A^2(\Omega_1)=\dim A^2(\Omega_2).\)
Consequently,
\[
\binom{n+q_1}{q_1}=
\binom{n+q_2}{q_2}.
\]
Since the function
\(q\longmapsto\binom{n+q}{q}\)
is strictly increasing on \(\mathbb Z_{\geq0}\), it follows that
\(q_1=q_2.\)
Writing their common value as \(q\), we obtain
\(
A^2(\Omega_j)=
\mathcal P_{\leq q},
~ j=1,2,\)
as claimed.
\end{proof}
\begin{proof}[Proof of Theorem~\ref{biholclassification}]
By Lemma~\ref{samebergmansp}, the two sandwichable domains have the same
Bergman space \(\mathcal P_{\leq q}\). Proposition~\ref{biholrigidity} therefore shows
that every biholomorphism \(F:\Omega_1\to\Omega_2\) is linear. By the horizon
cone argument preceding Lemma~\ref{keylemma}, its matrix preserves
\(\mathcal C\). Lemma~\ref{keylemma} now gives
\(
F(z)=\rho DPz,
\)
where \(\rho>0\), \(D\) is diagonal unitary, and \(P\) is a permutation
matrix.

Next, we show \(a_1=a_2\).

We first show that \(a_2\leq a_1\). Since
\(D_{a_1,t_1^-}\subset\Omega_1\text{ and }\Omega_2\subset D_{a_2,t_2^+},\)
we have
\(F(D_{a_1,t_1^-})\subset D_{a_2,t_2^+}\).

Write
\(
F(z)=\rho DPz,~
(Pz)_k=z_{\sigma(k)}.
\)
Since \(n\geq3\), there exists \(k<n\) such that
\(\sigma(k)\neq n\). For \(r>0\), set
\[
\delta_r
:=
\frac{t_1^-}{2(1+r^2)^{a_1}},
\]
and choose \(z^{(r)}\in\mathbb C^n\) such that
\[
|z_{\sigma(k)}^{(r)}|^2=r^2+\delta_r,
\qquad
|z_j^{(r)}|^2=r^2
\quad
(j\neq\sigma(k)).
\]
Because \(\sigma(k)\neq n\), we have
\(z^{(r)}\in D_{a_1,t_1^-}.\)
Hence
\(F(z^{(r)})\in D_{a_2,t_2^+}.\)
Since \(\sigma(n)\neq\sigma(k)\), this gives
\[
\frac{\rho^2t_1^-}{2(1+r^2)^{a_1}}
=
\rho^2\delta_r
<
\frac{t_2^+}{(1+\rho^2r^2)^{a_2}}.
\]
Therefore
\[
\frac{\rho^2t_1^-}{2t_2^+}
\frac{(1+\rho^2r^2)^{a_2}}
     {(1+r^2)^{a_1}}<1
\qquad(r>0).
\]
If \(a_2>a_1\), the left-hand side tends to \(+\infty\) as
\(r\to\infty\), a contradiction. Thus
\(a_2\leq a_1.\) 
Applying the same argument to
\(
F^{-1}:\Omega_2\longrightarrow\Omega_1
\)
gives \(a_1\leq a_2\). Consequently,
\(a_1=a_2.\)

Finally, assume in addition that \(\Omega_1=\Omega_2=: \Omega\). Since
\(1\in A^2(\Omega)\), the domain \(\Omega\) has finite positive volume. The
identity \(F(\Omega)=\Omega\) gives
\(
\operatorname{Vol}(\Omega)
=|\det F|^2\operatorname{Vol}(\Omega).\)
Hence \(|\det F|=1\). Since \(|\det(\rho DP)|=\rho^n\), we obtain
\(\rho=1\). 
\end{proof}

\section{General perturbation construction}\label{sec:general-perturbations}

Fix \(n\ge3\), \(a>2/(n-1)\), and positive numbers \(0<t_1<t_2\). We now
construct the perturbation family used throughout the remainder of the paper.
The construction is stated directly in its general form; the one-parameter
family used to prove Theorem~\ref{mthm2} will be obtained as a special case in
Section~\ref{sec:applications}.

For \(k=1,\ldots,n-1\), define
\begin{equation}\label{function_uk}
u_k(z):=(1+|z_n|^2)^{2a}
\bigl(|z_k|^2-|z_n|^2\bigr)^2.
\end{equation}
Then
\[
D_{a,t}=\{z\in\mathbb C^n:u_k(z)<t^2\text{ for all }k<n\}.
\]
Choose a smooth non-increasing cutoff function
\(
\chi\in C^\infty([0,\infty),[0,1])
\)
such that
\(
\chi=1 \text{ on }[0,t_1^2],\text{ and }
\chi=0 \text{ on }[t_2^2,\infty),
\)
and
\(
\chi'(s)<0 \text{ for } t_1^2<s<t_2^2.
\)
Define
\begin{equation}\label{function_R}
R(z):=\prod_{k=1}^{n-1}\chi(u_k(z)).
\end{equation}
Then \(R\in C^\infty(\mathbb C^n,[0,1])\), and \(R\) is invariant under
coordinatewise rotations and permutations of the first \(n-1\) coordinates.
Moreover,
\begin{equation}\label{eq:R-inner-outer}
R=1\quad\text{on }D_{a,t_1},
\qquad
R=0\quad\text{on }\mathbb C^n\setminus D_{a,t_2}.
\end{equation}

Define
\begin{equation}\label{function_alpha}
\alpha(s)=
\begin{cases}
\exp\!\left(-\dfrac{1}{s(1-s)}\right),&0<s<1,\\[6pt]
0,&s=0,1.
\end{cases}
\end{equation}
Then \(\alpha\in C^\infty([0,1])\),
\[
\alpha(0)=\alpha(1)=0,
\qquad
\alpha(s)>0\quad(0<s<1),
\]
and
\begin{equation}\label{eq:alpha-bound}
0\le \alpha(s)\le s(1-s)
\qquad(0\le s\le1).
\end{equation}
Indeed, putting \(x=s(1-s)\in[0,1/4]\), the function \(e^{-1/x}/x\) is
increasing on \((0,1/4]\), and hence
\[
\frac{\alpha(s)}{s(1-s)}
=\frac{e^{-1/x}}{x}
\le 4e^{-4}<1.
\]

Let \(I\subset\{1,\ldots,n\}\) be nonempty. For each \(j\in I\), choose
\(
m_j\in\mathbb Z_{\ge1}, 
\varepsilon_j>0,
\)
and assume
\begin{equation}\label{eq:Lambda}
\Lambda:=\sum_{j\in I}\varepsilon_j<\frac12.
\end{equation}
For \(m\ge1\), set
\begin{equation}\label{eq:h-m}
h_m(w):=\operatorname{Re}(w^m)(1+|w|^2)^{-m/2},
\qquad
\Psi(z):=\sum_{j\in I}\varepsilon_j h_{m_j}(z_j).
\end{equation}
Define
\begin{equation}\label{eq:general-perturbation}
R_{I,\mathbf m,\boldsymbol\varepsilon}(z)
:=R(z)+\alpha(R(z))\Psi(z).
\end{equation}
When convenient for brevity, we write
\[
R_{\boldsymbol\varepsilon}:=R_{I,\mathbf m,\boldsymbol\varepsilon}.
\]
Since \(|h_m|\le1\), we have \(|\Psi|\le\Lambda\). Put
\[
L_\Lambda(s):=s-\Lambda s(1-s),
\qquad 0\le s\le1.
\]
The function \(L_\Lambda\) is strictly increasing because \(\Lambda<1/2\),
and \eqref{eq:alpha-bound} gives
\begin{equation}\label{eq:perturbation-estimate}
L_\Lambda(R(z))
\le R_{\boldsymbol\varepsilon}(z)
\le R(z)+\Lambda R(z)(1-R(z)).
\end{equation}
In particular, the perturbation \(\alpha(R(z))\Psi(z)\) vanishes wherever \(R\in\{0,1\}\). Thus
\begin{equation}\label{eq:perturbation-inner-outer}
R_{\boldsymbol\varepsilon}=1\quad\text{on }D_{a,t_1},
\qquad
R_{\boldsymbol\varepsilon}=0
\quad\text{on }\mathbb C^n\setminus D_{a,t_2}.
\end{equation}

Let \(c\in(0,1)\) be a regular value of
\(R_{I,\mathbf m,\boldsymbol\varepsilon}\), and let
\(\Omega_{I,\mathbf m,\boldsymbol\varepsilon,c}\) denote the connected
component of
\[
\{R_{I,\mathbf m,\boldsymbol\varepsilon}>c\}
\]
that contains \(D_{a,t_1}\). Such regular values form a full-measure subset of
\((0,1)\) by Sard's theorem.

We next record, once and for all, the coordinate-axis information used both
to detect the loss of Reinhardt symmetry and to compute the automorphism
group. Put
\[
J:=\{1,\ldots,n-1\},
\qquad
I_0:=I\cap J,
\]
and, for \(q=1,\ldots,n\) and \(r>0\), define
\begin{equation}\label{eq:Theta-definition}
\Theta_q(r):=
\{\theta\in\mathbb R/2\pi\mathbb Z:
re^{i\theta}\mathbf e_q\in
\Omega_{I,\mathbf m,\boldsymbol\varepsilon,c}\}.
\end{equation}
For \(m\ge1\), let
\[
 S_m:=\{\theta\in\mathbb R/2\pi\mathbb Z:\cos(m\theta)>0\}.
\]
Since \(\chi\) is strictly decreasing on \((t_1^2,t_2^2)\), there is a
unique \(s_0\) in this interval such that \(\chi(s_0)=c\). Set
\begin{equation}\label{eq:axis-notation}
r_0:=s_0^{1/4},
\qquad
B(r):=\chi(r^4),
\qquad
\kappa_m(r):=r^m(1+r^2)^{-m/2}.
\end{equation}
For \(r\in(\sqrt{t_1},\sqrt{t_2})\), one has \(0<B(r)<1\).
Thus, for every \(j\in I_0\), define
\begin{equation}\label{eq:beta-definition}
\beta_j:(\sqrt{t_1},\sqrt{t_2})\longrightarrow\mathbb R,
\qquad
\beta_j(r):=
\frac{c-B(r)}
{\alpha(B(r))\varepsilon_j\kappa_{m_j}(r)}.
\end{equation}
In particular, \(\beta_j(r_0)=0\). Also set
\begin{equation}\label{eq:distinguished-profile-notation}
Q(r):=\chi\bigl((1+r^2)^{2a}r^4\bigr)^{n-1}\quad(r\ge0),
\qquad
\rho_n:=\inf\{r>0:Q(r)\le c\}.
\end{equation}
Then we have
\begin{lemma}\label{lem:axis-profiles}
For every \(j\in J\),
\begin{equation}\label{eq:axis-profile-r0}
\Theta_j(r_0)=
\begin{cases}
 S_{m_j},&j\in I_0,\\
\varnothing,&j\in J\setminus I_0.
\end{cases}
\end{equation}
Moreover, there exists \(\eta\in(0,\sqrt{t_2}-r_0)\) such that, for every
\(j\in I_0\), the function \(\beta_j\) is strictly increasing on
\([r_0,r_0+\eta)\), satisfies \(0<\beta_j(r)<1\) for
\(r_0<r<r_0+\eta\), and
\begin{equation}\label{eq:axis-profile-beta}
\Theta_j(r)=
\{\theta\in\mathbb R/2\pi\mathbb Z:
\cos(m_j\theta)>\beta_j(r)\}
\qquad(r_0<r<r_0+\eta).
\end{equation}
If \(n\in I\), then
\begin{equation}\label{eq:axis-profile-distinguished}
\Theta_n(\rho_n)= S_{m_n}.
\end{equation}
\end{lemma}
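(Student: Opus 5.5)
The plan is to restrict the defining function \(R_{\boldsymbol\varepsilon}=R_{I,\mathbf m,\boldsymbol\varepsilon}\) to the coordinate axes, where it becomes an explicit function of \(r\) and \(\theta\). I will then identify the superlevel set \(\{R_{\boldsymbol\varepsilon}>c\}\) there, and finally check that the relevant axis points lie in the component containing \(D_{a,t_1}\).

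\emph{Axis formula for \(j\in J\).} At \(z=re^{i\theta}\mathbf e_j\) with \(j\in J\) we have \(z_n=0\). Hence \(u_j(z)=r^4\) and \(u_k(z)=0\) for \(k\in J\setminus\{j\}\). Since \(\chi(0)=1\), this gives \(R(z)=B(r)\). In \(\Psi\) every term with index \(i\neq j\) vanishes because \(h_{m_i}(0)=0\), while
\[
h_{m_j}(re^{i\theta})=\cos(m_j\theta)\kappa_{m_j}(r).
\]
Therefore
\[
R_{\boldsymbol\varepsilon}(re^{i\theta}\mathbf e_j)=B(r)+\alpha(B(r))\,\varepsilon_j\kappa_{m_j}(r)\cos(m_j\theta)\quad(j\in I_0),
\]
and \(R_{\boldsymbol\varepsilon}=B(r)\) if \(j\notin I\).

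\emph{Case \(r=r_0\).} Here \(B(r_0)=\chi(s_0)=c\). Since \(\alpha(c)>0\) and \(\kappa_{m_j}(r_0)>0\), the condition \(R_{\boldsymbol\varepsilon}>c\) holds iff \(\cos(m_j\theta)>0\) when \(j\in I_0\). It never holds when \(j\in J\setminus I_0\).

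\emph{Case \(r>r_0\) near \(r_0\).} For \(r\in(\sqrt{t_1},\sqrt{t_2})\) we have \(0<B<1\), so \(R_{\boldsymbol\varepsilon}>c\) iff \(\cos(m_j\theta)>\beta_j(r)\). The function \(\beta_j\) is smooth with \(\beta_j(r_0)=0\). Its derivative at \(r_0\) is
\[
-B'(r_0)\big/\bigl(\alpha(c)\varepsilon_j\kappa_{m_j}(r_0)\bigr)>0,
\]
because \(B'(r_0)=4r_0^3\chi'(s_0)<0\) as \(s_0\in(t_1^2,t_2^2)\). By continuity we can choose \(\eta\), uniform over the finitely many \(j\in I_0\), such that \(\beta_j'>0\) on \([r_0,r_0+\eta)\) and \(0<\beta_j<1\) on \((r_0,r_0+\eta)\).

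\emph{Connectivity.} This is the one genuinely non-computational step. I will show that each axis point in the superlevel set is joined to the origin, which lies in \(D_{a,t_1}\), by the radial segment \(s\mapsto s\,re^{i\theta}\mathbf e_j\), \(s\in[0,1]\), and that this segment stays inside \(\{R_{\boldsymbol\varepsilon}>c\}\). Fix \(\theta\) with \(\cos(m_j\theta)>\beta_j(r)\ge0\), and take a radius \(\rho\le r\).
\begin{itemize}
\item If \(\rho^4\le t_1^2\), then \(B(\rho)=1\), so \(R_{\boldsymbol\varepsilon}=1>c\).
\item If \(\rho<r_0\) otherwise, then \(B(\rho)>c\) by the strict monotonicity of \(\chi\) on \((t_1^2,t_2^2)\). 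The perturbation term is \(\ge0\) because \(\cos(m_j\theta)>0\), so \(R_{\boldsymbol\varepsilon}>c\).
\item If \(\rho=r_0\), the positivity of \(\cos(m_j\theta)\) gives \(R_{\boldsymbol\varepsilon}>c\).
\item If \(r_0<\rho\le r\), then \(\beta_j(\rho)\le\beta_j(r)<\cos(m_j\theta)\) by monotonicity of \(\beta_j\), so again \(R_{\boldsymbol\varepsilon}>c\).
\end{itemize}
Conversely, axis points outside \(\{R_{\boldsymbol\varepsilon}>c\}\) are certainly not in the component. This proves \eqref{eq:axis-profile-r0} and \eqref{eq:axis-profile-beta}. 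The main point to handle carefully is the strict inequality on the segment for \(\rho<r_0\); it relies exactly on \(\chi\) being strictly decreasing on \((t_1^2,t_2^2)\) and equal to \(1\) below \(t_1^2\).

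\emph{Distinguished axis.} At \(z=re^{i\theta}\mathbf e_n\) we have \(u_k(z)=(1+r^2)^{2a}r^4\) for every \(k<n\), so \(R=Q(r)\) and \(\Psi=\varepsilon_n\kappa_{m_n}(r)\cos(m_n\theta)\). The function \(Q\) is continuous and non-increasing, with \(Q(0)=1>c\) and \(Q=0\) for large \(r\). Hence \(Q(\rho_n)=c\) and \(Q(\rho)>c\) for \(\rho<\rho_n\). The same computation at \(r=\rho_n\), together with the same radial-segment argument (using \(Q(\rho)>c\) for \(\rho<\rho_n\) and the nonnegativity of the perturbation when \(\cos(m_n\theta)>0\)), yields \(\Theta_n(\rho_n)=S_{m_n}\), which is \eqref{eq:axis-profile-distinguished}.
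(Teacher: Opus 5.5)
Your proposal is correct and follows essentially the same route as the paper. Both restrict $R_{\boldsymbol\varepsilon}$ explicitly to the coordinate axes, obtain $\beta_j'(r_0)>0$ from $\chi'(s_0)<0$ and pick a common $\eta$, and use radial segments to the origin for the connectivity step; your case split along the segment simply spells out what the paper states in one line.
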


\begin{proof}
On an ordinary coordinate axis we have
\begin{equation}\label{eq:ordinary-axis-formula}
R_{\boldsymbol\varepsilon}(re^{i\theta}\mathbf e_j)
=
\begin{cases}
B(r),&j\in J\setminus I_0,\\[2mm]
B(r)+\alpha(B(r))\varepsilon_j\kappa_{m_j}(r)
\cos(m_j\theta),&j\in I_0.
\end{cases}
\end{equation}
At \(r=r_0\), the unperturbed expression equals \(c\). If \(j\in I_0\)
and \(\cos(m_j\theta)>0\), the radial segment from the origin to
\(r_0e^{i\theta}\mathbf e_j\) lies in the superlevel set: along the segment
\(B\ge c\), and the perturbation is nonnegative and is strictly positive
where \(B=c\). Hence the endpoint lies in the selected component. The reverse
inclusion follows directly from the strict superlevel inequality. This proves
\eqref{eq:axis-profile-r0}.

Moreover,
\[
B'(r_0)=4r_0^3\chi'(s_0)<0,
\qquad
\beta_j'(r_0)=
\frac{-B'(r_0)}
{\alpha(c)\varepsilon_j\kappa_{m_j}(r_0)}>0.
\]
Since \(I_0\) is finite, a common
\(\eta\in(0,\sqrt{t_2}-r_0)\) can be chosen so that all \(\beta_j\) have the
asserted monotonicity and range. Formula
\eqref{eq:ordinary-axis-formula} shows that membership in the selected
component implies the inequality in \eqref{eq:axis-profile-beta}. Conversely,
if that inequality holds, then for \(r_0\le t\le r\),
\[
\beta_j(t)\le\beta_j(r)<\cos(m_j\theta),
\]
so the segment from radius \(r_0\) to radius \(r\) remains in the superlevel
set. The first part of the proof connects its initial point to the origin.
This proves \eqref{eq:axis-profile-beta}.

Finally, assume \(n\in I\). The function \(Q\) satisfies
\[
Q(\rho_n)=c,
\qquad
Q(r)>c\quad(0\le r<\rho_n).
\]
Along the distinguished axis,
\[
R_{\boldsymbol\varepsilon}(\rho_ne^{i\theta}\mathbf e_n)
=c+\alpha(c)\varepsilon_n
\kappa_{m_n}(\rho_n)\cos(m_n\theta).
\]
For \(\cos(m_n\theta)>0\), the radial segment to this endpoint lies in the
superlevel set, while the reverse inclusion again follows from the strict
inequality. Thus \eqref{eq:axis-profile-distinguished} holds.
\end{proof}

\begin{proposition}\label{prop:general-construction}
For every regular value \(c\in(0,1)\), the set
\(\Omega_{I,\mathbf m,\boldsymbol\varepsilon,c}\) is a smooth domain satisfying
\begin{equation}\label{eq:sandwich-inclusion}
D_{a,t_1}
\subset
\Omega_{I,\mathbf m,\boldsymbol\varepsilon,c}
\subset
D_{a,t_2}.
\end{equation}
Moreover, it is not Reinhardt. If, in addition,
\(
\frac{2}{n-1}<a\le\frac{3}{n-1},\)
then
\(
A^2(\Omega_{I,\mathbf m,\boldsymbol\varepsilon,c})
=\operatorname{Span}\{1,z_1,\ldots,z_n\},\)
and its Bergman metric has constant positive holomorphic sectional curvature
\(2\).
\end{proposition}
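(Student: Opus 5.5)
The plan is to verify the four assertions in turn: domain and smoothness, the sandwich inclusion, failure of Reinhardt symmetry, and the Bergman space and curvature. Each will come from the properties of \(R_{\boldsymbol\varepsilon}\) already recorded above.

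\emph{Domain, smoothness and sandwich.} \(\Omega:=\Omega_{I,\mathbf m,\boldsymbol\varepsilon,c}\) is a connected component of the open set \(\{R_{\boldsymbol\varepsilon}>c\}\), so it is open and connected. It is well defined because \(D_{a,t_1}\) is connected (each slice in \(z_n\) is a product of annuli containing the torus of radius \(|z_n|\), and these vary continuously in \(z_n\)), and \(R_{\boldsymbol\varepsilon}=1>c\) there by \eqref{eq:perturbation-inner-outer}. This already gives \(D_{a,t_1}\subset\Omega\). Since \(R_{\boldsymbol\varepsilon}=0<c\) off \(D_{a,t_2}\), we also get \(\Omega\subset D_{a,t_2}\).

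For smoothness, I take \(\gamma=c-R_{\boldsymbol\varepsilon}\) near \(\partial\Omega\). Boundary points of a component of a superlevel set satisfy \(R_{\boldsymbol\varepsilon}=c\). As \(c\) is a regular value, \(d\gamma\neq0\) there, so \(\partial\Omega\) is a smooth hypersurface. One subtlety is that \(\{R_{\boldsymbol\varepsilon}=c\}\) may touch other components, which I handle with a global defining function. By the implicit function theorem, \(\Omega\) lies locally on one side of \(\partial\Omega\). I then modify \(\gamma\) away from \(\overline\Omega\), e.g.\ by setting it positive on the other components via a partition of unity, so that \(\Omega=\{\gamma<0\}\) and \(\partial\Omega=\{\gamma=0\}\).

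\emph{Non-Reinhardt.} This is where Lemma \ref{lem:axis-profiles} enters. If \(I_0\neq\varnothing\), pick \(j\in I_0\). Then \(\Theta_j(r_0)=S_{m_j}\), which is a nonempty proper subset of the circle. A Reinhardt domain would have \(\Theta_j(r_0)\) invariant under all rotations, hence equal to \(\varnothing\) or the whole circle, a contradiction. If instead \(I_0=\varnothing\), then \(n\in I\), and \eqref{eq:axis-profile-distinguished} gives \(\Theta_n(\rho_n)=S_{m_n}\), with the same contradiction.

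The main point I would double-check is the inclusion "\(\Theta\subset S\)" at the critical radius. There the unperturbed value is exactly \(c\), so membership in the strict superlevel set forces \(\cos(m\theta)>0\). This needs \(\alpha(c)>0\) and \(\kappa_m>0\), both of which hold. On the distinguished axis one also needs that \(R\) restricted to \(z=re^{i\theta}\mathbf e_n\) equals \(Q(r)\), since there \(u_k=(1+r^2)^{2a}r^4\). This is the only genuinely delicate step, and it is supplied by the lemma.

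\emph{Bergman space and curvature.} With \(\frac{2}{n-1}<a\le\frac{3}{n-1}\), the sandwich inclusion \eqref{eq:sandwich-inclusion} makes \(\Omega\) \(a\)-sandwichable. Theorem \ref{sandwich} together with Corollary \ref{isom_cor} then gives \(A^2(\Omega)=\operatorname{Span}\{1,z_1,\ldots,z_n\}\). It also shows that the Bergman map is a holomorphic isometric embedding into \((\mathbb P^n,\omega_{\mathrm{FS}})\), so the holomorphic sectional curvature is constantly \(2\).
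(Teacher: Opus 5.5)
Your proposal is correct and follows essentially the same route as the paper. The inclusions come from \eqref{eq:perturbation-inner-outer}, smoothness from $c$ being a regular value, failure of the Reinhardt property from the axis profiles in Lemma~\ref{lem:axis-profiles} (via $j\in I_0$ or else $j=n$), and the Bergman space and curvature from Theorem~\ref{sandwich} together with Corollary~\ref{isom_cor}.

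One point in your smoothness step should be made explicit, and the paper's proof relies on it. Near each $p\in\partial\Omega$, the local side $\{R_{\boldsymbol\varepsilon}>c\}$ given by the implicit function theorem is connected, so it lies \emph{entirely} in $\Omega$; it is not enough that $\Omega$ lies on one side. This is what shows that $\partial\Omega$ is a union of components of the level hypersurface, and that $\overline\Omega$ is disjoint from the closures of the other components. Only then is your modification of $\gamma$ off $\overline\Omega$ legitimate.
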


\begin{proof}
The inclusions in \eqref{eq:sandwich-inclusion} follow immediately from
\eqref{eq:perturbation-inner-outer}. Since \(c\) is a regular value, the level set
\[
\{R_{I,\mathbf m,\boldsymbol\varepsilon}=c\}
\]
is a smooth, i.e., $C^\infty$ and non-singular, real hypersurface. The boundary of the selected
connected component is contained in this level set. At every boundary point,
the implicit function theorem identifies the superlevel set locally with one
side of a smooth hypersurface; that local side is connected and therefore lies
in the selected global component. Hence the boundary is locally the entire
level hypersurface, which is smooth.

To see that the domain is not Reinhardt, choose \(j\in I\). If \(j<n\),
\eqref{eq:axis-profile-r0} shows that the angular trace of its intersection
with the circle \(\{r_0e^{i\theta}\mathbf e_j\}\) is the nonempty proper set
\( S_{m_j}\).
If \(j=n\), the same conclusion follows from
\eqref{eq:axis-profile-distinguished}. Either conclusion is incompatible with
coordinatewise rotational invariance.

Finally, when \(
\frac{2}{n-1}<a\le\frac{3}{n-1}\), Theorem~\ref{sandwich} and
Corollary~\ref{isom_cor} give the stated Bergman space and curvature.
\end{proof}

\section{Automorphism groups of perturbation domains}\label{sec:perturbation-aut}
Retain the notation of Section~\ref{sec:general-perturbations}, and write
\[
\Omega:=\Omega_{I,\mathbf m,\boldsymbol\varepsilon,c},
\qquad
U:=J\setminus I_0.
\]
We continue to call the coordinates indexed by \(J\) ordinary and \(z_n\)
distinguished.
Thus \(I_0\) is the set of perturbed ordinary coordinates, and \(U\) is the set
of unperturbed ordinary coordinates.

Let
\(I_0\longrightarrow \mathbb Z_{\geq 1}\times \mathbb R_{>0},~
j\longmapsto (m_j,\varepsilon_j)\)
be the assignment of perturbation data.  Denote its distinct values by
\(
(d_1,\eta_1),\ldots,(d_s,\eta_s).\)
For each \(\nu=1,\ldots,s\), set
\[
E_\nu:=\{j\in I_0:(m_j,\varepsilon_j)=(d_\nu,\eta_\nu)\},
\qquad
r_\nu:=|E_\nu|.
\]
Thus \(I_0=\bigsqcup_{\nu=1}^s E_\nu\), and \(r_\nu\) is the multiplicity with
which the perturbation data \((d_\nu,\eta_\nu)\) occur.

Set
\[
K_n:=
\begin{cases}
\mathbb T, & n\notin I,\\
\mu_{m_n}, & n\in I.
\end{cases}
\]
Here
\[
\mathbb T=\{\lambda\in\mathbb C:|\lambda|=1\},
\qquad
\mu_m=\{\zeta\in\mathbb C:\zeta^m=1\}.
\]
We will denote finite permutation groups with the symbol $\Sigma$.

\begin{theorem}\label{thm:perturbed-aut-group}
Under the assumptions above,
\begin{equation}\label{eq_wholeautgp}
\operatorname{Aut}(\Omega)
\cong
\left(
\prod_{\nu=1}^s
(\mu_{d_\nu}^{r_\nu}\rtimes \Sigma_{r_\nu})
\right)
\times
(\mathbb T^{|U|}\rtimes \Sigma_{|U|})
\times
K_n,
\end{equation}
with the convention that empty products and factors of exponent \(0\) are
trivial.
\end{theorem}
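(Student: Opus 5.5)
By Theorem~\ref{biholclassification}, every automorphism of $\Omega$ has the form $F(z)=DPz$ with $D=\operatorname{diag}(\lambda_1,\ldots,\lambda_n)$ unitary and $(Pz)_k=z_{\sigma(k)}$ for a permutation $\sigma$; here $\rho=1$ because $\Omega$ is sandwichable by \eqref{eq:sandwich-inclusion}. The proof therefore reduces to deciding which pairs $(\lambda,\sigma)$ preserve $\Omega$. The plan has two inclusions. The first is that every pair satisfying the constraints listed below preserves $\Omega$. The second is that the profile data of Lemma~\ref{lem:axis-profiles} force exactly those constraints. The constraints are:
\begin{enumerate}
\item $\sigma$ fixes $n$;
\item $\sigma$ preserves $U$ and each block $E_\nu$;
\item $\lambda_j\in\mu_{d_\nu}$ for $j\in E_\nu$;
\item $\lambda_n\in K_n$;
\item $\lambda_j\in\mathbb T$ is arbitrary for $j\in U$.
\end{enumerate}
The resulting group is visibly the product in \eqref{eq_wholeautgp}. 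In it the diagonal parts form a normal subgroup, and the permutations act on them by conjugation, which permutes coordinates inside each block; this gives the semidirect products.

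\emph{Sufficiency.} The function $R$ is invariant under coordinatewise rotations and under permutations of $z_1,\ldots,z_{n-1}$, since each $u_k$ depends only on $|z_k|$ and $|z_n|$, and the $u_k$ are permuted among themselves. Next consider $h_m(\lambda w)$. It equals $h_m(w)$ whenever $\lambda^m=1$, because then $\operatorname{Re}((\lambda w)^m)=\operatorname{Re}(w^m)$ and $|\lambda w|=|w|$. Also, permuting indices inside a block $E_\nu$ leaves $\Psi$ unchanged, because all terms in that block carry the same data $(d_\nu,\eta_\nu)$. Coordinates in $U$ do not enter $\Psi$ at all. Hence $R_{\boldsymbol\varepsilon}\circ F=R_{\boldsymbol\varepsilon}$, so $F$ preserves $\{R_{\boldsymbol\varepsilon}>c\}$ and permutes its connected components. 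Since $F(D_{a,t_1})=D_{a,t_1}$, the component containing $D_{a,t_1}$ is mapped to itself, so $F(\Omega)=\Omega$.

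\emph{Necessity.} Since $F$ is unitary and maps axes to axes, $F(re^{i\theta}\mathbf e_j)=re^{i(\theta+\arg\lambda_{\sigma^{-1}(j)})}\mathbf e_{\sigma^{-1}(j)}$. It follows that $\Theta_{\sigma^{-1}(j)}(r)$ is a rotation of $\Theta_j(r)$ for every $r$. Consequently the radial data attached to each axis must be matched by $\sigma$, namely the function $r\mapsto\Theta_j(r)$ up to rotation. These data are then used as follows.
\begin{itemize}
\item To separate $n$ from $J$, compare $\sup\{r:\Theta_j(r)\neq\varnothing\}$. On the $\mathbf e_n$ axis one has $u_k=(1+r^2)^{2a}r^4>r^4$, so the unperturbed exit radius there is $\rho_n<r_0$. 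Since the perturbations are small, I expect the axial sets for $n$ to be empty slightly beyond $\rho_n$ but below $r_0$, while every $j\in J$ still has a full or nonempty angular set there. This gives $\sigma(n)=n$.
\item By \eqref{eq:axis-profile-r0}, axes in $U$ have $\Theta_j(r_0)=\varnothing$, while axes in $I_0$ have the nonempty set $S_{m_j}$. So $\sigma$ preserves $U$ and $I_0$.
\item Within $I_0$, the set $S_{m_j}$ has exactly $m_j$ components, so $m_j$ is recovered. By \eqref{eq:axis-profile-beta}, the measure of $\Theta_j(r)$ for $r$ slightly larger than $r_0$ is $2\arccos(\beta_j(r))$. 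Since $\beta_j(r)$ is proportional to $1/\varepsilon_j$ by \eqref{eq:beta-definition}, this also recovers $\varepsilon_j$. Hence $\sigma$ preserves each block $E_\nu$.
\item Finally, $S_m+\phi=S_m$ holds iff $\phi\in\frac{2\pi}{m}\mathbb Z$. Applied to $j\in I_0$ this gives $\lambda_j\in\mu_{m_j}$. When $n\in I$, the same argument at radius $\rho_n$, using \eqref{eq:axis-profile-distinguished}, gives $\lambda_n\in\mu_{m_n}$.
\end{itemize}

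The main obstacle is the first bullet, separating $z_n$ from the ordinary coordinates. Lemma~\ref{lem:axis-profiles} records the profile on the $n$-th axis only at the single radius $\rho_n$, and only when $n\in I$. I would first compute $\sup\{r:\Theta_n(r)\neq\varnothing\}$ directly from the estimate \eqref{eq:perturbation-estimate}. This should be at most the radius $\rho'$ where $L_\Lambda(Q(\rho'))=c$. I would then show that $\rho'<r_0$ and that $\Theta_j(\rho')$ is nonempty for every $j\in J$; the latter holds because $B(\rho')>c$ there. If the margins turn out to be too tight, I would fall back on a volume or slice-based invariant that distinguishes the $n$-th coordinate.
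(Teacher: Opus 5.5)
Your sufficiency argument and the axis-transport half of the necessity argument are exactly the paper's Lemma~\ref{lem:monomial-stabilizer}, and they are fine. This covers preserving $U$, $I_0$ and each block $E_\nu$, and forcing $\lambda_j\in\mu_{d_\nu}$ and $\lambda_n\in K_n$.

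The gap is in the step you flagged yourself, $\sigma(n)=n$: the bound you propose runs in the wrong direction. The left inequality $L_\Lambda(R)\le R_{\boldsymbol\varepsilon}$ in \eqref{eq:perturbation-estimate} can only certify that points lie in $\{R_{\boldsymbol\varepsilon}>c\}$. It can never show $\Theta_n(r)=\varnothing$. Since $L_\Lambda(s)<s$ on $(0,1)$, the radius $\rho'$ with $L_\Lambda(Q(\rho'))=c$ has $Q(\rho')>c$, hence $\rho'<\rho_n$. But $\Theta_n(r)\neq\varnothing$ for every $r<\rho_n$: use radial segments at angles with $\cos(m_n\theta)\ge0$, or the whole circle if $n\notin I$. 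If $n\in I$, it is also nonempty at $r=\rho_n$ by \eqref{eq:axis-profile-distinguished}, and by continuity slightly beyond. So $\sup\{r:\Theta_n(r)\neq\varnothing\}\ge\rho_n>\rho'$, and the claimed inequality is false. Your expectation that the $n$-th axial sets are empty just beyond $\rho_n$ also fails when $n\in I$. As written, the plan does not prove $\sigma(n)=n$, and the fallback to ``a volume or slice-based invariant'' is not an argument.

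The exit-radius idea can be rescued, but only with the \emph{upper} half of \eqref{eq:perturbation-estimate} and a quantitative check that uses both $n\ge3$ and $\Lambda<\frac12$. For $r\ge r_0$ one has $(1+r^2)^{2a}r^4\ge s_0$, so $Q(r)\le c^{n-1}\le c^2$. Since $s\mapsto s+\Lambda s(1-s)$ is increasing,
\[
R_{\boldsymbol\varepsilon}(re^{i\theta}\mathbf e_n)\le c^2+\Lambda c^2(1-c^2)<c^2+\tfrac12c^2(1-c^2)=c-\tfrac{c}{2}(1-c)^2(c+2)<c .
\]
By continuity of $Q$, there is $\delta>0$ with $\Theta_n(r)=\varnothing$ for all $r>r_0-\delta$. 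On the other hand, $\Theta_j(r)\neq\varnothing$ for every $j\in J$ and every $r<r_0$, via radial segments on which $B>c$ and the perturbation is $\ge0$. The set $\{r:\Theta_q(r)\neq\varnothing\}$ is the same for $q$ and $\sigma(q)$, so this forces $\sigma(n)=n$.

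With this repair your route is a genuine alternative to the paper's Lemma~\ref{lem:linear-distinguished}, which moves off the axes instead. The paper bumps one ordinary coordinate of a point whose moduli are all equal, tunes the phases so that $\Psi$ vanishes at the image, and uses $L_\Lambda(x)>c>x^{n-1}$. Your version uses only the axis profiles, uniformly with the rest of the necessity argument. However, the margin in the crude bound is only of order $(1-c)^2$ when $n=3$ and $\Lambda$ is near $\frac12$. So this computation is essential and cannot be left as an expectation.
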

\begin{remark}
The semidirect product groups \(\mu_{d_\nu}^{r_\nu}\rtimes \Sigma_{r_\nu}\) and \(\mathbb T^{|U|}\rtimes \Sigma_{|U|} \) are {\it wreath products} as defined in \cite[Section 4.1]{JamesKerber}. They are often denoted by \(\mu_{d_\nu} \wr \Sigma_{r_\nu}\) and \(\mathbb T\wr \Sigma_{|U|} \), respectively. It appears to be an interesting byproduct of this investigation that it produces examples of unbounded domains whose Bergman metric has constant positive holomorphic sectional curvature and whose (finite) automorphism groups are products of such wreath products.
\end{remark}

To prove this theorem, we need the following lemmas.
\begin{lemma}\label{lem:circle-stabilizer}
For every \(m\ge1\), the angular set \( S_m\) defined in
Section~\ref{sec:general-perturbations} satisfies
\(
\{e^{it}\in\mathbb T: S_m+t= S_m\}=\mu_m.
\)
\end{lemma}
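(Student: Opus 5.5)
The plan is to prove the two inclusions separately, viewing rotations as translations on \(\mathbb R/2\pi\mathbb Z\). The set \(S_m=\{\theta:\cos(m\theta)>0\}\) depends on \(\theta\) only through \(m\theta\) modulo \(2\pi\). The first step is the inclusion \(\mu_m\subset\) stabilizer. Let \(e^{it}\in\mu_m\). Then \(mt\in2\pi\mathbb Z\), so \(\cos(m(\theta+t))=\cos(m\theta)\) for every \(\theta\). Hence \(S_m+t=S_m\).

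The second step is the reverse inclusion, which is the only real content. Suppose \(S_m+t=S_m\). I would describe \(S_m\) explicitly as a disjoint union of \(m\) open arcs,
\[
S_m=\bigsqcup_{k=0}^{m-1}\left(\frac{2\pi k}{m}-\frac{\pi}{2m},\ \frac{2\pi k}{m}+\frac{\pi}{2m}\right),
\]
each of length \(\pi/m\), with centers at the points \(2\pi k/m\). A translation is a homeomorphism of the circle, so it maps connected components of \(S_m\) onto connected components of \(S_m+t=S_m\). It also maps the midpoint of an arc to the midpoint of its image arc. In particular, the midpoint \(0\) of the \(k=0\) arc is sent to \(t\), and \(t\) must be one of the centers \(2\pi k/m\). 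Therefore \(e^{imt}=1\), that is, \(e^{it}\in\mu_m\).

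An alternative that avoids component bookkeeping uses indicator functions. If \(S_m+t=S_m\), then \(\mathbf 1_{S_m}\) is invariant under translation by \(t\). The Fourier coefficient of \(\mathbf 1_{S_m}\) at frequency \(m\) is \(\int_{S_m}e^{-im\theta}\,d\theta\). Substituting \(\phi=m\theta\) shows that it equals \(\int_{\cos\phi>0}\cos\phi\,d\phi=2\neq0\), with the sine part vanishing. Translation invariance multiplies this coefficient by \(e^{-imt}\), so \(e^{imt}=1\).

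The only point needing care is to confirm that the arcs listed above are exactly the components of \(S_m\) and are pairwise disjoint, so that centers correspond to centers. The Fourier argument avoids even that check, and I would present it as the main proof.
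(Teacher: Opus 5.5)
Your proof is correct, but your main argument takes a different route from the paper's. The paper argues topologically. A translation preserving \(S_m\) preserves its boundary \(\partial S_m=\{(\pi/2+k\pi)/m: k=0,\ldots,2m-1\}\). This is a set of \(2m\) points spaced \(\pi/m\) apart, so it forces only \(mt\in\pi\mathbb Z\). The paper then needs a separate parity step: an odd multiple of \(\pi\) gives \(\cos(m(\theta+t))=-\cos(m\theta)\), which sends \(S_m\) onto the complementary open set.

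Your midpoint variant is the same topological idea, but it tracks centers of components instead of boundary points. Since the centers \(2\pi k/m\) are spaced \(2\pi/m\) apart, you get \(mt\in 2\pi\mathbb Z\) in one step, with no parity argument.

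Your Fourier argument, which you make the main proof, dispenses with the component and boundary structure altogether. The single computation \(\widehat{\mathbf 1_{S_m}}(m)=2\neq 0\), combined with \(\widehat{\mathbf 1_{S_m}}(m)=e^{-imt}\,\widehat{\mathbf 1_{S_m}}(m)\), gives \(e^{imt}=1\) directly, so both of the paper's steps are handled at once. It is a little less elementary than the paper's argument. In exchange, it uses only the equality of indicator functions almost everywhere, so it shows that even invariance of \(S_m\) up to a null set forces \(e^{it}\in\mu_m\).
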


\begin{proof}
Suppose \( S_m+t= S_m\). Then translation by \(t\) preserves
the boundary
\[
\partial S_m=
\left\{
\frac{\pi/2+k\pi}{m}:k=0,\ldots,2m-1
\right\}.
\]
Hence
\(
mt\in\pi\mathbb Z.
\)
If \(mt\) were an odd multiple of \(\pi\), then
\(\cos(m(\theta+t))=-\cos(m\theta),\)
so \( S_m+t\) would be the complementary open set, not
\( S_m\). Therefore
\(
mt\in 2\pi\mathbb Z,\)
which is equivalent to \(e^{it}\in\mu_m\). The converse inclusion is immediate.
\end{proof}
Next we show the rigidity of the distinguished coordinate under the automorphisms.
\begin{lemma}
\label{lem:linear-distinguished}
Let
\(F\in\operatorname{Aut}(\Omega)\) 
be an automorphism of \(\Omega\) of the form
\(
F(z)=Az,
~
A=DP,
\)
where
\(
D=\operatorname{diag}(\lambda_1,\ldots,\lambda_n),
~
\lambda_j\in\mathbb T,
\)
and \(P\) is a permutation matrix. If \(\sigma\in \Sigma_n\) is defined by
\(
(Pz)_k=z_{\sigma(k)},
\)
then
\(
\sigma(n)=n.
\)
\end{lemma}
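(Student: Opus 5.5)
The plan is to exploit the fact that an automorphism of the form \(F(z)=DPz\) maps coordinate axes to coordinate axes while preserving moduli, and to find a radial invariant of the axis slices \(\Omega\cap\mathbb C\mathbf e_q\) that distinguishes the axis \(q=n\) from every ordinary axis \(j\in J\).

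\emph{Step 1 (axes go to axes).} For \(z=re^{i\theta}\mathbf e_q\), the \(k\)-th coordinate of \(F(z)\) is \(\lambda_k z_{\sigma(k)}\). This is nonzero only for \(k=\sigma^{-1}(q)\), where it equals \(\lambda_k re^{i\theta}\). Hence \(F\) maps \(\Omega\cap\mathbb C\mathbf e_q\) bijectively onto \(\Omega\cap\mathbb C\mathbf e_{\sigma^{-1}(q)}\), which is the bijection used in Step 3. Consequently, with \(\Theta_q(r)\) as in \eqref{eq:Theta-definition}, each \(\Theta_{\sigma^{-1}(q)}(r)\) is a rotation of \(\Theta_q(r)\), and the outer radius
\[
\tau_q:=\sup\{r>0:\Theta_q(r)\neq\varnothing\}
\]
satisfies \(\tau_{\sigma^{-1}(q)}=\tau_q\). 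It therefore suffices to show that \(\tau_n<\tau_j\) for every \(j\in J\).

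\emph{Step 2 (the radii).} On an ordinary axis, formula \eqref{eq:ordinary-axis-formula} holds. If \(j\in U\), then \(R_{\boldsymbol\varepsilon}=B(r)\), which exceeds \(c\) for \(r<r_0\); the segment from the origin lies in the component, so \(\tau_j\ge r_0\). If \(j\in I_0\), then \eqref{eq:axis-profile-r0} gives \(\Theta_j(r_0)=S_{m_j}\neq\varnothing\), so again \(\tau_j\ge r_0\). On the distinguished axis, \(u_k(r e^{i\theta}\mathbf e_n)=(1+r^2)^{2a}r^4\) for all \(k<n\), so \(R(re^{i\theta}\mathbf e_n)=Q(r)\). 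By \eqref{eq:perturbation-estimate}, membership in \(\Omega\) requires
\[
Q(r)+\Lambda Q(r)(1-Q(r))>c.
\]
Since \(s\mapsto s+\Lambda s(1-s)\) is increasing and \(Q\) is non-increasing in \(r\), it suffices to show this fails at \(r=r_0\).

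\emph{Step 3 (the key estimate).} Since \((1+r_0^2)^{2a}r_0^4>s_0\) and \(\chi\) is non-increasing, we have \(\chi\bigl((1+r_0^2)^{2a}r_0^4\bigr)\le\chi(s_0)=c\), and hence \(Q(r_0)\le c^{n-1}\le c^2\) because \(n\ge3\). Using \(\Lambda<1/2\), it then remains to verify the elementary inequality
\[
x+\tfrac12x(1-x)\le c\quad\text{at } x=c^2,\qquad\text{i.e.}\qquad c\bigl(\tfrac32-\tfrac12c^2\bigr)\le1\ \text{ on }(0,1).
\]
This holds because the function \(c\mapsto c\bigl(\tfrac32-\tfrac12c^2\bigr)\) is increasing on \((0,1)\) and equals \(1\) at \(c=1\). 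With the strict inequality \(\Lambda<\tfrac12\), this gives strict failure of the membership condition at \(r_0\), and therefore for all \(r\ge r_0\). Hence \(\tau_n<r_0\le\tau_j\) for every \(j\in J\) (one should double-check that \(\tau_n\) is strictly below \(r_0\) and not merely \(\le\); failure on a closed ray \([r_0,\infty)\) together with openness of \(\Omega\) gives this). Combining with Step 1, \(\tau_{\sigma^{-1}(n)}=\tau_n<\tau_j\) for all \(j<n\), which forces \(\sigma^{-1}(n)=n\), i.e. \(\sigma(n)=n\).

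I expect the main obstacle to be Step 3: the perturbation can raise \(R\) above \(Q\) by up to \(\Lambda Q(1-Q)\), so one must check carefully that \(\Lambda<1/2\) and \(n\ge3\) leave enough room below the level \(c\). If \(\Lambda<1/2\) turns out to be tight in some edge case, the fallback is to use \(Q(r_0)\le c^{n-1}\) with the strict monotonicity of \(\chi\) on \((t_1^2,t_2^2)\) to gain a strict inequality, treating separately the case where \((1+r_0^2)^{2a}r_0^4\ge t_2^2\); in that case \(Q(r_0)=0\) and the conclusion is trivial.
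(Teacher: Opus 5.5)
Your proof is correct and takes a genuinely different route from the paper.

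\textbf{The paper's route.} The paper argues asymptotically along the cone $\mathcal C$:
\begin{enumerate}
\item It chooses $x$ with $L_\Lambda(x)>c>x^{n-1}$.
\item It constructs points $p^{(r)}\in\Omega$ whose $\sigma(n)$-th coordinate is slightly enlarged, certifying membership through the lower bound $L_\Lambda$ in \eqref{eq:perturbation-estimate}.
\item It adjusts phases so that the perturbation term vanishes at $Ap^{(r)}$.
\item It lets $r\to\infty$ to get $R_{\boldsymbol\varepsilon}(Ap^{(r)})\to x^{n-1}<c$.
\end{enumerate}

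\textbf{Your route.} You compare radii of coordinate-axis slices, which transplants the mechanism of Theorem~\ref{reference_domain_rigidity} to $\Omega$. On ordinary axes, \eqref{eq:ordinary-axis-formula} and \eqref{eq:axis-profile-r0} give $\tau_j\ge r_0$. On the distinguished axis you need only the upper bound in \eqref{eq:perturbation-estimate} together with $Q(r_0)\le c^{n-1}\le c^2$.

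\textbf{Comparison.} Your version is shorter and stays in a bounded region near the origin. It needs no phase choices or limits, and it fits directly into the $\Theta_q$ bookkeeping of Lemma~\ref{lem:monomial-stabilizer}. The paper's version does not rely on the axis profiles of Lemma~\ref{lem:axis-profiles}. It is also uniform with the large-$r$ arguments used in Theorem~\ref{biholclassification} and Corollary~\ref{autRein}. In both proofs, the hypotheses $n\ge3$ and $\Lambda<\frac12$ enter at analogous points.

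\textbf{One justification needs fixing.} The strict inequality $\tau_n<r_0$ does not follow from openness of $\Omega$.
\begin{itemize}
\item When $U\neq\varnothing$, each $j\in U$ has axis slice exactly the open disk of radius $r_0$. So $\tau_j=r_0$, and strictness cannot come from the ordinary side.
\item Openness of $\Omega$ is perfectly compatible with $\Omega\cap\mathbb C\mathbf e_n$ also being the open disk of radius $r_0$.
\end{itemize}
What gives strictness is the strict inequality $Q(r_0)+\Lambda Q(r_0)(1-Q(r_0))<c$, which you proved, together with continuity of $r\mapsto Q(r)$. Then the membership condition fails on $(r_1,\infty)$ for some $r_1<r_0$, hence $\tau_n\le r_1<r_0\le\tau_j$ for every $j\in J$. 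With that one-line repair the argument is complete.
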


\begin{proof}
Assume, toward a contradiction, that
\(
\sigma(n)=j<n.
\)

Put
\[
y:=c^{1/(n-1)}.
\]
Since \(0<y<1\) and \(n\ge3\),
\[
L_\Lambda(y)
>
\frac{y(1+y)}2
>
y^2
\ge
y^{n-1}
=
c.
\]
Choose \(x\in(0,y)\), sufficiently close to \(y\), such that
\(L_\Lambda(x)>c\). Since \(x<y\), we also have \(x^{n-1}<c\); thus
\begin{equation}\label{eq:x-choice}
L_\Lambda(x)>c,
\qquad
x^{n-1}<c.
\end{equation}
Choose \(\tau\in(t_1^2,t_2^2)\) with
\[
\chi(\tau)=x,
\]
and, for \(r>0\), put
\[
\delta_r:=\sqrt{\tau}(1+r^2)^{-a}.
\]

For \(0\le s\le\delta_r\), choose \(p_s^{(r)}\in\mathbb C^n\) whose phases
are independent of \(s\) and whose squared moduli satisfy
\[
\left|(p_s^{(r)})_k\right|^2
=
\begin{cases}
r^2+s, & k=j,\\
r^2, & k\ne j.
\end{cases}
\]
Set
\[
p^{(r)}:=p_{\delta_r}^{(r)}.
\]
Then \(p_0^{(r)}\in D_{a,t_1}\). For \(0\le s\le\delta_r\), the only
possibly nontrivial ordinary-coordinate factor in \(R(p_s^{(r)})\) is the
\(j\)-th one, whose argument of \(\chi\) equals
\[
(1+r^2)^{2a}s^2\le(1+r^2)^{2a}\delta_r^2=\tau.
\]
Since \(\chi\) is non-increasing,
\[
R(p_s^{(r)})\ge\chi(\tau)=x.
\]
By \eqref{eq:perturbation-estimate} and the monotonicity of \(L_\Lambda\),
\[
R_{\boldsymbol\varepsilon}(p_s^{(r)})
\ge
L_\Lambda(R(p_s^{(r)}))
\ge
L_\Lambda(x)
>
c.
\]
Thus \(s\mapsto p_s^{(r)}\) lies in \(\{R_{\boldsymbol\varepsilon}>c\}\) and joins
\(p^{(r)}\) to \(D_{a,t_1}\). Hence
\[
p^{(r)}\in\Omega.
\]

This membership argument is valid for every fixed choice of phases. After
\(A=DP\) has been fixed, choose the common phases of the family
\(p_s^{(r)}\). Since the indices
\(\sigma(\ell)\), \(\ell\in I\), are pairwise distinct, these phases may be
chosen independently so that
\[
\operatorname{Re}\left((Ap^{(r)})_\ell^{m_\ell}\right)=0
\qquad(\ell\in I).
\]
The perturbation term in \(R_{\boldsymbol\varepsilon}(Ap^{(r)})\) therefore vanishes.

Because \(\sigma(n)=j\) and \(\sigma\) is injective,
\[
|(Ap^{(r)})_n|^2=r^2+\delta_r,
\qquad
|(Ap^{(r)})_k|^2=r^2
\quad(k<n).
\]
Consequently, every ordinary-coordinate factor in \(R(Ap^{(r)})\) has the
same argument
\[
\tau_r
:=
(1+r^2+\delta_r)^{2a}\delta_r^2
=
\tau
\left(
\frac{1+r^2+\delta_r}{1+r^2}
\right)^{2a}.
\]
Since \(\delta_r/(1+r^2)\to0\), we have \(\tau_r\to\tau\), and hence
\[
R_{\boldsymbol\varepsilon}(Ap^{(r)})
=
R(Ap^{(r)})
=
\chi(\tau_r)^{n-1}
\longrightarrow
\chi(\tau)^{n-1}
=
x^{n-1}
<
c.
\]
Thus \(Ap^{(r)}\notin\Omega\) for all sufficiently large \(r\), contradicting
\(A(\Omega)=\Omega\). Therefore \(\sigma(n)=n\).
\end{proof}

\begin{lemma}
\label{lem:monomial-stabilizer}
Let
\(
A=DP
\)
be a monomial map, where \(D=\operatorname{diag}(\lambda_1,\ldots,\lambda_n),
~
\lambda_j\in\mathbb T,\) and define \(\sigma\in \Sigma_n\) by
\(
(Pz)_k=z_{\sigma(k)}.
\)
Assume \(\sigma(n)=n\). Then \(A\in\operatorname{Aut}(\Omega)\) if and only if
\[
\sigma(U)=U,
\qquad
\sigma(E_\nu)=E_\nu
\quad(\nu=1,\ldots,s),
\]
\[
\lambda_j\in\mu_{d_\nu}
\quad(j\in E_\nu),
\qquad
\lambda_n\in K_n.
\]
There is no further restriction on \(\lambda_j\) for \(j\in U\).
\end{lemma}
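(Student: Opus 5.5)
The plan is to prove sufficiency by showing that the defining function is invariant, and necessity by reading off the coordinate-axis angular profiles from Lemma~\ref{lem:axis-profiles}.

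\emph{Sufficiency.} Suppose the four conditions hold. First we check that \(R_{\boldsymbol\varepsilon}\circ A=R_{\boldsymbol\varepsilon}\).

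The function \(R\) depends only on the moduli \(|z_k|\). It is symmetric in the ordinary coordinates \(z_1,\ldots,z_{n-1}\), and it treats \(z_n\) separately. Since \(\sigma(n)=n\) and \(|\lambda_j|=1\), this gives \(R(Az)=R(z)\).

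For \(\Psi\), the term attached to an index \(\ell\in I\) is \(\varepsilon_\ell h_{m_\ell}((Az)_\ell)=\varepsilon_\ell h_{m_\ell}(\lambda_\ell z_{\sigma(\ell)})\). When \(\lambda_\ell^{m_\ell}=1\), this equals \(\varepsilon_\ell h_{m_\ell}(z_{\sigma(\ell)})\). The hypotheses \(\sigma(E_\nu)=E_\nu\) and \(\sigma(U)=U\) mean that \(\sigma\) permutes the indices of \(I\) while preserving the data \((m,\varepsilon)\). The index \(n\) is handled by \(\lambda_n\in K_n\): if \(n\notin I\), \(z_n\) does not occur in \(\Psi\) and \(\lambda_n\) is arbitrary. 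Hence \(\Psi(Az)=\Psi(z)\).

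It follows that \(A\) maps \(\{R_{\boldsymbol\varepsilon}>c\}\) onto itself. Moreover \(A\) preserves \(D_{a,t_1}\), which is Reinhardt and symmetric in the ordinary coordinates. So \(A\) maps the component containing \(D_{a,t_1}\) onto itself, that is, \(A(\Omega)=\Omega\). Nothing in this argument restricts \(\lambda_j\) for \(j\in U\), since those coordinates never enter \(\Psi\).

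\emph{Necessity.} Suppose \(A\in\operatorname{Aut}(\Omega)\). We compare angular traces on coordinate axes. Since \(A(re^{i\theta}\mathbf e_q)=re^{i\theta}\lambda_{\sigma^{-1}(q)}\mathbf e_{\sigma^{-1}(q)}\), the sets \(\Theta_q(r)\) are permuted up to rotation:
\[
\Theta_{\sigma^{-1}(q)}(r)=\Theta_q(r)+\arg\lambda_{\sigma^{-1}(q)}.
\]
This may need bookkeeping with \(\sigma\) versus \(\sigma^{-1}\), but the argument is symmetric.

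The steps, in order:
\begin{enumerate}
\item At \(r=r_0\), formula \eqref{eq:axis-profile-r0} shows that \(\Theta_j(r_0)\) is empty exactly when \(j\in U\). Hence \(\sigma(U)=U\), and therefore \(\sigma(I_0)=I_0\).
\item For \(j\in I_0\), the set \(S_{m_j}\) has exactly \(m_j\) components. So \(m\) is preserved by \(\sigma\) on \(I_0\).
\item To recover \(\varepsilon_j\), use the profiles \eqref{eq:axis-profile-beta} for \(r_0<r<r_0+\eta\). The set \(\{\cos(m\theta)>\beta\}\) determines \(\beta\) from the length of each arc, \(2\arccos(\beta)/m\). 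Matching these sets forces \(\beta_j(r)=\beta_{\sigma^{-1}(j)}(r)\) for all \(r\) near \(r_0\). By \eqref{eq:beta-definition}, once \(m\) is equal this gives \(\varepsilon_j=\varepsilon_{\sigma^{-1}(j)}\). Hence \(\sigma(E_\nu)=E_\nu\).
\item The rotation condition \(S_{m_j}+t=S_{m_j}\), together with Lemma~\ref{lem:circle-stabilizer}, gives \(\lambda_j\in\mu_{d_\nu}\) for \(j\in E_\nu\).
\item If \(n\in I\), then since \(\sigma(n)=n\), the same argument applied to \eqref{eq:axis-profile-distinguished} at radius \(\rho_n\) gives \(\lambda_n\in\mu_{m_n}\). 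If \(n\notin I\), then \(K_n=\mathbb T\) and there is nothing to prove.
\end{enumerate}

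The main obstacle is step 3, distinguishing different \(\varepsilon\) values with the same \(m\). This needs the one-parameter family of profiles \(\beta_j(r)\), not just the trace at the single radius \(r_0\). One must also check that arcs of the same length but with different \(m\) cannot be confused; this is already ruled out by step 2.
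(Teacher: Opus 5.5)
Your proposal is correct and follows the paper's proof essentially step for step. For sufficiency you show $R_{\boldsymbol\varepsilon}\circ A=R_{\boldsymbol\varepsilon}$ and that $A$ preserves the component containing $D_{a,t_1}$. For necessity you transport the axis traces $\Theta_q(r)$ under $A$ and read off, in turn: $\sigma(U)=U$ from emptiness at $r_0$; preservation of $m$ from the number of arcs of $S_m$; preservation of $\varepsilon$ from the arc lengths $\tfrac{2}{m}\arccos\beta_j(r)$ for $r_0<r<r_0+\eta$; and the roots-of-unity conditions from Lemma~\ref{lem:circle-stabilizer} at $r_0$ and at $\rho_n$. Your transport identity, written with $\sigma^{-1}$, is correct and equivalent to the paper's $\Theta_{\sigma(q)}(r)+\phi_q=\Theta_q(r)$.
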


\begin{proof}
Assume first that \(A\in\operatorname{Aut}(\Omega)\), and write
\(\lambda_q=e^{i\phi_q}\). Since
\[
A\bigl(re^{i\theta}\mathbf e_{\sigma(q)}\bigr)
=
re^{i(\theta+\phi_q)}\mathbf e_q,
\]
the identity \(A(\Omega)=\Omega\) gives
\begin{equation}\label{eq:axis-transport}
\Theta_{\sigma(q)}(r)+\phi_q=\Theta_q(r)
\qquad(q=1,\ldots,n,\ r>0).
\end{equation}

If \(q\in U\), then \eqref{eq:axis-transport} and
\eqref{eq:axis-profile-r0} imply
\(\Theta_{\sigma(q)}(r_0)=\varnothing\). Since \(\sigma(n)=n\), we have
\(\sigma(q)\in J\), and another application of
\eqref{eq:axis-profile-r0} gives \(\sigma(q)\in U\). Thus
\[
\sigma(U)=U,
\qquad
\sigma(I_0)=I_0.
\]
If \(q\in I_0\), then \eqref{eq:axis-transport} and
\eqref{eq:axis-profile-r0} show that \( S_{m_{\sigma(q)}}\) is a
rotation of \( S_{m_q}\). Since \( S_m\) has exactly \(m\)
connected components,
\begin{equation}\label{eq:m-preserved}
m_{\sigma(q)}=m_q.
\end{equation}

It remains to recover the coefficients \(\varepsilon_j\). Fix \(q\in I_0\),
put \(m:=m_q=m_{\sigma(q)}\), and take \(r\in(r_0,r_0+\eta)\), with
\(\eta\) as in Lemma~\ref{lem:axis-profiles}. The sets
\(\Theta_q(r)\) and \(\Theta_{\sigma(q)}(r)\) differ by a rotation, so their
component lengths are equal. By \eqref{eq:axis-profile-beta},
\[
\frac{2}{m}\arccos(\beta_q(r))
=
\frac{2}{m}\arccos(\beta_{\sigma(q)}(r)).
\]
Thus \(\beta_q(r)=\beta_{\sigma(q)}(r)\). All factors in the definition of
\(\beta_j(r)\), except \(\varepsilon_j\), depend only on \(m_j\); hence
\[
\varepsilon_{\sigma(q)}=\varepsilon_q.
\]
Together with \eqref{eq:m-preserved}, this shows that \(\sigma\) preserves
every block \(E_\nu\).

We now determine the diagonal factors. Let \(j\in E_\nu\). Since
\(\sigma(j)\in E_\nu\), equations \eqref{eq:axis-transport} and
\eqref{eq:axis-profile-r0} give
\[
 S_{d_\nu}+\phi_j= S_{d_\nu}.
\]
Lemma~\ref{lem:circle-stabilizer} yields
\[
\lambda_j\in\mu_{d_\nu}.
\]
For \(j\in U\), there is no additional restriction beyond
\(\lambda_j\in\mathbb T\).

Since \(\sigma(n)=n\), equation \eqref{eq:axis-transport} gives
\[
\Theta_n(r)+\phi_n=\Theta_n(r)
\qquad(r>0).
\]
If \(n\notin I\), no perturbation term involves \(z_n\), so there is no
further restriction on \(\lambda_n\), and \(\lambda_n\in\mathbb T=K_n\).
Assume now that \(n\in I\). Applying \eqref{eq:axis-transport} at
\(r=\rho_n\), using \eqref{eq:axis-profile-distinguished}, and then applying
Lemma~\ref{lem:circle-stabilizer}, we obtain
\[
\lambda_n\in\mu_{m_n}=K_n.
\]
This proves the necessary conditions.

Conversely, suppose that the stated permutation and diagonal conditions hold.
Recall the functions \(h_m\) and \(\Psi\) from \eqref{eq:h-m}; by
\eqref{eq:general-perturbation},
\(R_{\boldsymbol\varepsilon}=R+\alpha(R)\Psi\).
The base function \(R\) is invariant under coordinatewise rotations and under
permutations of the ordinary coordinates. Since \(\sigma(n)=n\),
\[
R(Az)=R(z).
\]
For each block \(E_\nu\),
\[
\begin{aligned}
\sum_{j\in E_\nu}\varepsilon_j h_{m_j}((Az)_j)
&=
\eta_\nu\sum_{j\in E_\nu}
h_{d_\nu}(\lambda_j z_{\sigma(j)})\\
&=
\eta_\nu\sum_{j\in E_\nu}
h_{d_\nu}(z_{\sigma(j)})\\
&=
\eta_\nu\sum_{j\in E_\nu}h_{d_\nu}(z_j).
\end{aligned}
\]
Here the second equality uses \(\lambda_j^{d_\nu}=1\), and the third uses
that \(\sigma\) permutes \(E_\nu\). There are no perturbation terms indexed
by \(U\). If \(n\in I\), then \(\sigma(n)=n\) and
\(\lambda_n^{m_n}=1\), so the distinguished term is also invariant. Hence
\[
\Psi(Az)=\Psi(z),
\qquad
R_{\boldsymbol\varepsilon}(Az)=R_{\boldsymbol\varepsilon}(z).
\]
Thus \(A\) preserves \(\{R_{\boldsymbol\varepsilon}>c\}\). It also preserves
\(D_{a,t_1}\), and therefore preserves the connected component containing
\(D_{a,t_1}\), namely \(\Omega\). Hence
\(
A\in\operatorname{Aut}(\Omega).
\)
\end{proof}

\begin{proof}[Proof of Theorem~\ref{thm:perturbed-aut-group}]
By \eqref{eq:sandwich-inclusion} and Theorem~\ref{biholclassification}, every
\(F\in\operatorname{Aut}(\Omega)\) is of the form
\[
F(z)=DPz,
\]
where \(D=\operatorname{diag}(\lambda_1,\ldots,\lambda_n)\) is unitary
diagonal and \(P\) is a permutation matrix. By
Lemma~\ref{lem:linear-distinguished}, the associated permutation satisfies
\(\sigma(n)=n\). Lemma~\ref{lem:monomial-stabilizer} then shows that 
\[
\sigma\in
\left(\prod_{\nu=1}^s\operatorname{Sym}(E_\nu)\right)
\times\operatorname{Sym}(U)
\]
and
\[
(\lambda_j)_{j\in E_\nu}\in\mu_{d_\nu}^{r_\nu}
\quad(\nu=1,\ldots,s),
\qquad
(\lambda_j)_{j\in U}\in\mathbb T^{|U|},
\qquad
\lambda_n\in K_n.
\]
Each \(\operatorname{Sym}(E_\nu)\cong \Sigma_{r_\nu}\) acts by permuting the
factors of \(\mu_{d_\nu}^{r_\nu}\), and
\(\operatorname{Sym}(U)\cong \Sigma_{|U|}\) acts similarly on
\(\mathbb T^{|U|}\). Therefore
\[
\operatorname{Aut}(\Omega)
\cong
\left(
\prod_{\nu=1}^s
(\mu_{d_\nu}^{r_\nu}\rtimes \Sigma_{r_\nu})
\right)
\times
(\mathbb T^{|U|}\rtimes \Sigma_{|U|})
\times
K_n.
\]\end{proof}

\section{Non-Reinhardt Domains with Trivial Automorphism Group, Proof of Theorem \ref{mthm2}}\label{sec:applications}
We now isolate the one-parameter family that proves the abundance
claim in Theorem~\ref{mthm2}. \par
Take \(I=\{1,\ldots,n\}\), let \(m_1=\cdots=m_n=1\), and choose positive
numbers \(\varepsilon_1,\ldots,\varepsilon_n\) such that
\(
\sum_{j=1}^n\varepsilon_j<\frac12
\)
and \(\varepsilon_1,\ldots,\varepsilon_{n-1}\) are pairwise distinct.

 Write
\(
R_{\boldsymbol\varepsilon}:=
R_{I,\mathbf 1,\boldsymbol\varepsilon}.
\)
Let
\(
\mathcal R
:=
\left\{
c\in(0,1):
c\text{ is a regular value of }R_{\boldsymbol\varepsilon}
\right\}.
\)
By Sard's theorem, \(\mathcal R\) has full measure in \((0,1)\) and is uncountable.\par

For \(c\in\mathcal R\), set
\(\Omega_c:=\Omega_{I,\mathbf 1,\boldsymbol\varepsilon,c}.\) We will prove
Theorem~\ref{mthm2} by showing that the domains \(\Omega_c\) are pairwise
biholomorphically inequivalent for distinct regular values \(c\).
\begin{proof}[Proof of Theorem \ref{mthm2}]
Proposition~\ref{prop:general-construction} gives the asserted
smoothness, sandwich, non-Reinhardt, and curvature properties. Moreover,
Theorem~\ref{thm:perturbed-aut-group} gives
\[
\operatorname{Aut}(\Omega_c)=\{\operatorname{id}\},
\]
because every coordinate is perturbed with degree \(1\), i.e., \(m_1=\cdots=m_n=1\), the ordinary
perturbation amplitudes are pairwise distinct, and
\(\mu_1=\{1\}\).

It remains to prove that distinct regular values give biholomorphically
inequivalent domains. Suppose that
\(
F:\Omega_{c_1}\longrightarrow\Omega_{c_2}
\)
is a biholomorphism. By Theorem~\ref{biholclassification},
\(
F(z)=\rho DPz,
\)
where \(\rho>0\), \(D\) is diagonal unitary, and \(P\) is a permutation
matrix. Let \(\sigma\in \Sigma_n\) be determined by
\(
(Pz)_k=z_{\sigma(k)}.
\)

Put
\(
s_i:=\chi^{-1}(c_i),
~ i=1,2,
\)
where the inverse is taken on the interval
\((t_1^2,t_2^2)\). Choose
\[
q\in\{1,\ldots,n-1\}\setminus\{\sigma(n)\},
\]
which is possible because \(n\geq3\). For \(r>0\), set
\[
\delta_r:=\frac{\sqrt{s_1}}{(1+r^2)^a}
\]
and define \(z^{(r)}\in\mathbb C^n\) by
\[
(z^{(r)})_j=
\begin{cases}
i\sqrt{r^2+\delta_r},&j=q,\\[2mm]
ir,&j\neq q.
\end{cases}
\]

Since all the coordinates of \(z^{(r)}\) are purely imaginary and all
perturbation degrees are equal to \(1\), the perturbation term vanishes at both
\(z^{(r)}\) and \(-z^{(r)}\). Moreover,
\[
u_q(z^{(r)})=s_1,
\qquad
u_j(z^{(r)})=0\quad(j\neq q),
\]
and hence
\[
R_{\boldsymbol\varepsilon}(z^{(r)})
=R_{\boldsymbol\varepsilon}(-z^{(r)})
=\chi(s_1)
=c_1.
\]
Decreasing \(\delta_r\) to \(0\) joins these points to
\(D_{a,t_1}\) through the superlevel set. Thus
\(
z^{(r)},-z^{(r)}\in\partial\Omega_{c_1}.
\)

Since \(F\) is linear,
\(
F(-z^{(r)})=-F(z^{(r)}).\)
The unperturbed part \(R\) is even, whereas the perturbation term is odd
because all \(m_j=1\). Since
\(F(z^{(r)}),F(-z^{(r)})\in\partial\Omega_{c_2},\)
adding the two corresponding boundary equations gives
\(R(F(z^{(r)}))=c_2.\)

Let \(k_0=\sigma^{-1}(q)\). Our choice \(q\neq\sigma(n)\) implies
\(k_0<n\). Consequently, the distinguished coordinate of \(F(z^{(r)})\)
has squared modulus \(\rho^2r^2\), the \(k_0\)-th ordinary coordinate
has squared modulus \(\rho^2(r^2+\delta_r)\), and all other coordinates
have squared modulus \(\rho^2r^2\). Therefore
\[
c_2=R(F(z^{(r)}))
=
\chi\!\left(
s_1\rho^4
\left(
\frac{1+\rho^2r^2}{1+r^2}
\right)^{2a}
\right) \qquad (r>0).
\]
Since \(c_2\in(0,1)\) and \(\chi\) is strictly decreasing on its
transition interval, we obtain
\[
s_2=s_1\rho^4
\left(
\frac{1+\rho^2r^2}{1+r^2}
\right)^{2a}
\qquad(r>0).
\]

Letting \(r\to0\) gives
\(
s_2=s_1\rho^4,\)
whereas letting \(r\to\infty\) gives
\(s_2=s_1\rho^{4+4a}.\)
Since \(a>0\), \(s_1>0\), and \(\rho>0\), it follows that
\(\rho=1.\)
Hence
\(s_1=s_2,\)
and therefore
\[
c_1=\chi(s_1)=\chi(s_2)=c_2.
\]

Thus distinct parameters in \(\mathcal R\) give pairwise
biholomorphically inequivalent domains. Since
\(\mathcal R\) is uncountable, the conclusion follows.
\end{proof}
\section{The reference Reinhardt family}\label{sec:reference-family}

We conclude by recording the automorphism group and parameter rigidity of the
Reinhardt domains \(D_{a,t}\). In fact, Theorem \ref{biholclassification} yields the following corollary:
\begin{corollary}\label{autRein}
Let \(n\ge3\), \(a>2/(n-1)\), and \(t>0\). Then
\[
\operatorname{Aut}(D_{a,t})
\cong
\bigl(\mathbb T^{n-1}\rtimes \Sigma_{n-1}\bigr)\times\mathbb T.
\]
\end{corollary}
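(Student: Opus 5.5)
Since \(D_{a,t}\) is trivially \(a\)-sandwichable (take \(t_1=t_2=t\)), Theorem~\ref{biholclassification} applies with \(\Omega_1=\Omega_2=D_{a,t}\). It shows that every automorphism has the form \(F(z)=DPz\) with \(D=\operatorname{diag}(\lambda_1,\ldots,\lambda_n)\) diagonal unitary and \(P\) a permutation matrix, \((Pz)_k=z_{\sigma(k)}\). It remains to decide which pairs \((D,\sigma)\) preserve \(D_{a,t}\), and then to identify the resulting group.

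\textbf{Step 1: \(\sigma(n)=n\).} I would rerun the argument of Lemma~\ref{lem:linear-distinguished}, or the \(a_2\le a_1\) argument in the proof of Theorem~\ref{biholclassification}, which is simpler here since there is no perturbation.
\begin{itemize}
\item Suppose \(\sigma(n)=j<n\). Take a point with \(|z_j|^2=r^2+\delta_r\), \(\delta_r=\tfrac{t}{2}(1+r^2)^{-a}\), and all other \(|z_k|^2=r^2\); this point lies in \(D_{a,t}\).
\item Its image has distinguished coordinate of squared modulus \(r^2+\delta_r\) and all ordinary coordinates of squared modulus \(r^2\).
\item Membership of the image would therefore require \(\delta_r<t(1+r^2+\delta_r)^{-a}\) for every \(k<n\). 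This is consistent, so this test point alone gives no contradiction.
\item Instead I would use the point with \(|z_j|^2=r^2-\delta\) and \(|z_k|^2=r^2\) otherwise, where \(\delta=\tfrac t2 (1+r^2)^{-a}\cdot 0.99\cdot 2\). Then \(z\in D_{a,t}\) needs only the \(j\)-th condition, \(\delta<t(1+r^2)^{-a}\).
\item The image has \(|w_n|^2=r^2-\delta\), so each ordinary condition requires \(\delta<t(1+r^2-\delta)^{-a}\), which is again satisfied.
\end{itemize}
So the clean way is the asymmetry of the conditions: in \(D_{a,t}\), two ordinary coordinates \(k,k'\) may each deviate from \(|z_n|^2\) by almost \(t(1+|z_n|^2)^{-a}\), in opposite directions. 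I would take \(|z_k|^2=r^2+\delta\) and \(|z_{k'}|^2=r^2-\delta\), with \(\delta\) slightly less than \(t(1+r^2)^{-a}\), choosing \(k\ne k'\) both different from \(j\). This is possible when \(n\ge 4\); when \(n=3\) I would choose instead \(k=j\) and \(k'\) the remaining ordinary index. After applying \(F\), the distinguished coordinate is \(z_j\), and some ordinary image coordinate then differs from it in squared modulus by about \(2\delta\), while the bound is still \(\approx t(1+r^2)^{-a}\). For large \(r\) and \(\delta\) close to \(t(1+r^2)^{-a}\) this is a contradiction. Hence \(\sigma(n)=n\).

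\textbf{Step 2: the converse.} For any \(\sigma\) fixing \(n\) and any \(\lambda\in\mathbb T^n\), \(DP\) preserves \(D_{a,t}\). The defining inequalities involve only the moduli \(|z_k|\) and \(|z_n|\), and they are permuted among themselves by \(\sigma|_J\).

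\textbf{Step 3: the group structure.} The automorphism group is the set of \(DP\) with \(P\) in \(\operatorname{Sym}(J)\cong\Sigma_{n-1}\). The diagonal torus \(\mathbb T^{n-1}\) is normalized by the permutations, which permute its factors, and the last factor \(\lambda_n\in\mathbb T\) commutes with everything. This yields \((\mathbb T^{n-1}\rtimes\Sigma_{n-1})\times\mathbb T\).

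The main obstacle is Step 1, namely choosing test points that exploit the two-sided slack correctly, especially when \(n=3\).
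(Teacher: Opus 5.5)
\textbf{Gap in Step 1 for $n\ge 4$.} Your reduction to monomial maps $DP$ via Theorem~\ref{biholclassification}, and your Steps 2 and 3, are fine and agree with the paper. Step 1, however, fails as written for $n\ge4$.

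When $k,k'$ are both different from $j=\sigma(n)$, your test point has $|z_j|^2=r^2$. The distinguished coordinate $\lambda_n z_j$ of $Az$ therefore has squared modulus $r^2$. The ordinary coordinates of $Az$ have squared moduli $r^2+\delta$, $r^2-\delta$, and $r^2$, the last case including the image of $z_n$. So every defining difference for $Az$ is $0$ or $\pm\delta$. Since $\delta<t(1+r^2)^{-a}$, which is exactly the bound for $Az$, you get $Az\in D_{a,t}$ and no contradiction. A concrete instance is $n=4$, $\sigma=(1\,4)$, $k=2$, $k'=3$. The discrepancy $2\delta$ you invoke occurs only between two ordinary coordinates of $Az$, and the defining inequalities never compare ordinary coordinates with each other.

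\textbf{How to repair it.} Your $n=3$ choice, $k=j$ and $k'\in J\setminus\{j\}$, works for every $n\ge3$, since $J\setminus\{j\}\neq\varnothing$. The distinguished coordinate of $Az$ then has squared modulus $r^2+\delta$, and the image of $z_{k'}$ has squared modulus $r^2-\delta$. If $\tfrac12 t(1+r^2)^{-a}<\delta<t(1+r^2)^{-a}$ and $r^2\ge\delta$, then
\[
2\delta>t(1+r^2)^{-a}>t(1+r^2+\delta)^{-a},
\]
which violates the defining inequality for $Az$.

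Alternatively, the one-sided test point you discarded already works; this is the paper's argument. Your choice $\delta_r=\tfrac t2(1+r^2)^{-a}$ was simply too small. Put $T_s:=t(1+s)^{-a}$. The strict inequality $t(1+s+\delta)^{-a}<T_s$ lets you pick $\delta\in(0,T_s)$ close enough to $T_s$ that $\delta>t(1+s+\delta)^{-a}$. Then the point with $|z_j|^2=s+\delta$ and all other squared moduli equal to $s$ lies in $D_{a,t}$. Its image has distinguished squared modulus $s+\delta$ and all ordinary squared moduli $s$, so the defining inequality fails. This works for every fixed $s\ge0$, with no large-$r$ limit needed.
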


\begin{proof}
Theorem~\ref{biholclassification} shows that every automorphism is a monomial
unitary map \(A=DP\). Let \(\sigma\in \Sigma_n\) be determined by
\((Pz)_k=z_{\sigma(k)}\). We claim that \(\sigma(n)=n\).

Suppose instead that \(\sigma(n)=j<n\). Fix \(s\ge0\), and put
\[
T_s:=\frac{t}{(1+s)^a}.
\]
The function
\[
\delta\longmapsto \frac{t}{(1+s+\delta)^a}
\]
is continuous and strictly decreasing. Since its value at \(\delta=T_s\) is
strictly smaller than \(T_s\), we may choose \(\delta\in(0,T_s)\) such that
\begin{equation}\label{eq:reference-delta}
\delta>\frac{t}{(1+s+\delta)^a}.
\end{equation}
Choose \(z\in\mathbb C^n\) with
\[
|z_n|^2=s,
\qquad
|z_j|^2=s+\delta,
\qquad
|z_k|^2=s\quad(k<n,\ k\ne j).
\]
Since \(\delta<T_s\), we have \(z\in D_{a,t}\). On the other hand,
\[
|(Az)_n|^2=s+\delta,
\]
and, by injectivity of \(\sigma\), every ordinary coordinate of \(Az\) has
squared modulus \(s\). Thus membership of \(Az\) in \(D_{a,t}\) would require
\[
\delta<\frac{t}{(1+s+\delta)^a},
\]
contradicting \eqref{eq:reference-delta}. Hence \(\sigma(n)=n\).

It follows that the permutation is arbitrary on the first \(n-1\)
coordinates and fixes the last coordinate. Conversely, all such permutations,
together with arbitrary coordinatewise unitary rotations, preserve the
defining inequalities. This gives the stated semidirect product.
\end{proof}

\begin{theorem}\label{reference_domain_rigidity}
Let \(n\geq 3\) and let
\(
a_1,a_2>\frac{2}{n-1},~t_1,t_2>0.
\)
Then \(D_{a_1,t_1}\) and \(D_{a_2,t_2}\) are biholomorphic if and only if
\(a_1=a_2\text{ and }t_1=t_2.\)
\end{theorem}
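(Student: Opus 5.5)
The "if" direction is trivial. For the converse, suppose \(F:D_{a_1,t_1}\to D_{a_2,t_2}\) is a biholomorphism. Each \(D_{a_j,t_j}\) is trivially \(a_j\)-sandwichable, with \(D_{a_j,t_j}\subset D_{a_j,t_j}\subset D_{a_j,t_j}\). Theorem~\ref{biholclassification} therefore applies directly. It gives \(a_1=a_2=:a\) and \(F(z)=\rho DPz\) with \(\rho>0\), \(D\) diagonal unitary and \(P\) a permutation matrix. Since \(D\) and \(P\) alone act only by rotations and permutations, the remaining task is to show \(\sigma(n)=n\) and then to extract \(t_1=t_2\), together with \(\rho=1\), from the scaling.

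First we show \(\sigma(n)=n\), where \((Pz)_k=z_{\sigma(k)}\). I will adapt the argument of Corollary~\ref{autRein}, now with the scaling \(\rho\) present. Suppose \(\sigma(n)=j<n\). Take \(z\) with \(|z_n|^2=s\), \(|z_j|^2=s+\delta\), and all other \(|z_k|^2=s\), where \(\delta<t_1/(1+s)^a\), so that \(z\in D_{a,t_1}\). Then \(w=F(z)\) has \(|w_n|^2=\rho^2(s+\delta)\) and every ordinary coordinate of modulus squared \(\rho^2 s\). Membership in \(D_{a,t_2}\) would require
\[
\rho^2\delta<\frac{t_2}{(1+\rho^2(s+\delta))^a}.
\]
Choose \(\delta=t_1/(2(1+s)^a)\) and let \(s\to\infty\). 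The left side then decays like \(s^{-a}\cdot\rho^2 t_1/2\), and the right side also decays like \(t_2\rho^{-2a}s^{-a}\), so no contradiction comes out yet. The rate of decay alone does not suffice, so I would compare constants instead. Fix \(s\), take \(\delta\) close to \(t_1/(1+s)^a\), and use the inverse map \(F^{-1}\) symmetrically. This yields two inequalities, one from \(F\) and one from \(F^{-1}\), that relate \(\rho^{2+2a}t_1\) and \(t_2\) in opposite directions. If \(\sigma(n)\neq n\), the asymmetry between the \(n\)-th coordinate, which carries the weight \((1+|z_n|^2)^a\), and the ordinary coordinates should force incompatible inequalities as \(s\to0\) versus \(s\to\infty\). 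I expect this bookkeeping to be the main obstacle. An alternative is to show \(\sigma(n)=n\) by a simpler invariant, using the same test points as in the proof of Theorem~\ref{biholclassification} but pushed to the boundary.

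Once \(\sigma(n)=n\) is established, I compare boundaries directly. Because \(F\) maps \(D_{a,t_1}\) onto \(D_{a,t_2}\) and is a homeomorphism of \(\mathbb C^n\), it maps the boundary onto the boundary. Take boundary points with \(|z_n|=r\), \(|z_1|^2=r^2+t_1/(1+r^2)^a\), and all other ordinary coordinates of modulus \(r\); these lie in \(\partial D_{a,t_1}\) and are not in the closure of the interior of the other pieces. Their images have \(|w_n|^2=\rho^2r^2\) and some ordinary coordinate exceeding the others by \(\rho^2t_1/(1+r^2)^a\). Being on \(\partial D_{a,t_2}\) at such a point forces
\[
\rho^2\frac{t_1}{(1+r^2)^a}=\frac{t_2}{(1+\rho^2r^2)^a}\qquad\text{for all } r>0.
\]
Setting \(r\to0\) gives \(\rho^2t_1=t_2\). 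Letting \(r\to\infty\) gives \(\rho^{2+2a}t_1=t_2\). Together these yield \(\rho=1\) and \(t_1=t_2\), exactly as in the final step of the proof of Theorem~\ref{mthm2}.

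Alternatively, the case \(\sigma(n)\neq n\) can be handled by the same boundary identity. It then yields, for a coordinate pair with the roles of the weight swapped, an identity of the form \(\rho^2 t_1(1+r^2)^{-a}=t_2(1+\rho^2(r^2+\delta_r))^{-a}\), with \(\delta_r\) tied to the \(n\)-th coordinate. Comparing its \(r\to0\) and \(r\to\infty\) limits in the same way should produce the contradiction, and I would try this first.
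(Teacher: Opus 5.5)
Your reduction via Theorem~\ref{biholclassification} to \(a_1=a_2\) and \(F(z)=\rho DPz\) is fine, and so is your last step. Once \(\sigma(n)=n\) is known, your test points lie in \(\partial D_{a,t_1}\). Their images have exactly one nonzero modulus difference, so lying in \(\overline{D_{a,t_2}}\setminus D_{a,t_2}\) forces \(\rho^2t_1(1+r^2)^{-a}=t_2(1+\rho^2r^2)^{-a}\). The limits \(r\to0\) and \(r\to\infty\) then give \(\rho=1\) and \(t_1=t_2\). This is a valid substitute for the paper's comparison of the distinguished-axis radii \(s_{a,t_2}=\rho^2s_{a,t_1}\).

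The genuine gap is the step \(\sigma(n)=n\), which you never establish. The interior-point attempt fails, as you note yourself, and the \(F^{-1}\) bookkeeping is only a hope. The fallback you say you would try first also does not work as stated. If \(\sigma(n)=j<n\), your swapped identity (with \(\delta_r=t_1(1+r^2)^{-a}\)) gives \(t_2=\rho^2t_1(1+\rho^2t_1)^a\) at \(r=0\) and \(t_2=\rho^{2+2a}t_1\) as \(r\to\infty\). These two relations hold simultaneously whenever \(t_1<1\), namely for \(\rho^2=1/(1-t_1)\), so the two limits yield no contradiction.

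To close the gap along your lines you must use the identity for every \(r\). With \(x=r^2\) and \(\lambda=(t_2/(\rho^2t_1))^{1/a}\), it becomes \(\rho^2t_1(1+x)^{-a}=(\lambda-1)+(\lambda-\rho^2)x\) for all \(x\ge0\). This is impossible, since the left side is strictly convex and the right side is affine.

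The paper avoids all this with a simpler invariant that your proposal is missing: coordinate-axis sections. For \(j<n\) one has \(D_{a,t}\cap\mathbb C\mathbf e_j=\{|w|^2<t\}\), while \(D_{a,t}\cap\mathbb C\mathbf e_n=\{|w|^2<s_{a,t}\}\) with \(s_{a,t}(1+s_{a,t})^a=t\), so \(s_{a,t}<t\). The map \(\rho DP\) sends \(\mathbb C\mathbf e_{\sigma(k)}\) onto \(\mathbb C\mathbf e_k\) and multiplies every radius by the same factor \(\rho\). Hence the unique axis carrying the smaller disc must be fixed, which gives \(\sigma(n)=n\) immediately. The same sections then give \(t_2=\rho^2t_1\) and \(s_{a,t_2}=\rho^2s_{a,t_1}\), and therefore \(\rho=1\) and \(t_1=t_2\).
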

\begin{proof}
The sufficient direction is immediate. Conversely, suppose that
\(F:D_{a_1,t_1}\longrightarrow D_{a_2,t_2}\)
is a biholomorphism. Since \(D_{a_1,t_1}\) and \(D_{a_2,t_2}\) are
\(a_1\)- and \(a_2\)-sandwichable, respectively,
Theorem~\ref{biholclassification} gives
\(a:=a_1=a_2\)
and
\(
F(z)=\rho DPz,\)
where \(\rho>0\), \(D\) is diagonal unitary, and \(P\) is a
permutation matrix.

For \(j=1,\ldots,n\), set
\(L_j:=\mathbb C\mathbf e_j,\)
and let \(s_{a,t_1}>0\) be the unique solution of
\(s(1+s)^a=t_1.\)
Then
\[
D_{a,t_1}\cap L_j
=
\{w\mathbf e_j:|w|^2<t_1\},
\qquad j<n,
\]
whereas
\[
D_{a,t_1}\cap L_n
=
\{w\mathbf e_n:|w|^2<s_{a,t_1}\}.
\]
Since
\(t_1=s_{a,t_1}(1+s_{a,t_1})^a>s_{a,t_1},\)
the axis \(L_n\) is the unique coordinate axis whose intersection
with \(D_{a,t_1}\) has smaller radius than the others. The analogous statements hold
for \(D_{a,t_2}\). Hence \(P\) fixes \(L_n\).

Comparison of the ordinary coordinate-axis sections gives
\(t_2=\rho^2t_1,\)
while comparison of the distinguished coordinate-axis sections gives
\(s_{a,t_2}=\rho^2s_{a,t_1}.\)
Put \(s:=s_{a,t_1}\). Then
\[
t_2=s_{a,t_2}(1+s_{a,t_2})^a
=
\rho^2s(1+\rho^2s)^a.
\]
On the other hand,
\[
t_2=\rho^2t_1=
\rho^2s(1+s)^a.
\]
Therefore
\[
(1+\rho^2s)^a=(1+s)^a.
\]
Since \(a>0\) and \(s>0\), it follows that
\(\rho=1\) and hence \(t_2=t_1.\)
\end{proof}

\end{document}